\definecolor{newblue}{rgb}{0.2, 0.3, 0.85}
\numberwithin{equation}{section}
\definecolor{dgreen}{rgb}{0.0, 0.56, 0.0}
\newcommand{\N}{\ensuremath{\mathbb N}}
\newcommand{\R}{\ensuremath{\mathbb R}}
\newcommand{\meas}{\mathfrak{m}}
\newcommand{\lip}{{\rm lip \,}}
\DeclarePairedDelimiter\abs{\lvert}{\rvert}
\DeclarePairedDelimiter\scal{\langle}{\rangle}
\newcommand{\st}{\ensuremath{\ :\ }} 
\newcommand{\eqdef}{\ensuremath{\vcentcolon=}}
\newcommand \eps{\ensuremath{\varepsilon}} 
\renewcommand{\epsilon}{\varepsilon}
\newcommand{\de}{\ensuremath{\,\mathrm d}} 
\renewcommand{\d}{\ensuremath{\mathrm d}} 
\DeclareMathOperator{\supp}{spt} 
\DeclareMathOperator{\sn}{sn}
\newcommand{\CD}{\mathsf{CD}}
\newcommand{\RCD}{\mathsf{RCD}}
\newcommand{\dist}{\mathsf{d}}
\let\div\undefined
\DeclareMathOperator{\div}{div}
\newcommand{\ric}{\ensuremath{\mathrm{Ric}}} 
\newcommand{\sect}{\ensuremath{\mathrm{Sect}}} 
\DeclareMathOperator{\vol}{vol}
\theoremstyle{plain}
\newtheorem{thm}{Theorem}[section] 
\theoremstyle{plain}
\theoremstyle{plain}
\newtheorem{prop}[thm]{Proposition}
\theoremstyle{plain}
\newtheorem{lemma}[thm]{Lemma}
\theoremstyle{plain}
\newtheorem{cor}[thm]{Corollary}
\theoremstyle{definition}
\newtheorem{defn}[thm]{Definition} 
\theoremstyle{definition}
\newtheorem{remark}[thm]{Remark}
\theoremstyle{definition}
\theoremstyle{definition}
\title[Existence of isoperimetric regions in $\ric\ge0$ and ${\rm AVR}>0$]{On the existence of isoperimetric regions in manifolds with nonnegative Ricci curvature and Euclidean volume growth}
\author{Gioacchino Antonelli}\address{Scuola Normale Superiore, Piazza dei Cavalieri, 7, 56126 Pisa, Italy.}\email{gioacchino.antonelli@sns.it}
\author{Elia Bru\`{e}}\address{Institute for Advanced Study, 1 Einstein Dr., Princeton NJ 05840,
U.S.A.}\email{elia.brue@math.ias.edu}
\author{Mattia Fogagnolo}\address{Centro di Ricerca Matematica Ennio De Giorgi, Scuola Normale Superiore, Piazza dei Cavalieri, 3, 56126 Pisa, Italy.}\email{mattia.fogagnolo@sns.it}
\author{Marco Pozzetta}\address{Dipartimento di Matematica e Applicazioni, Universit\`a di Napoli Federico II, Via Cintia, Monte S. Angelo, 80126 Napoli, Italy.}\email{marco.pozzetta@unina.it}
\date{\today}
\subjclass{Primary: 49Q20, 49J45, 53A35. Secondary: 53C23, 49J40.}
\keywords{Isoperimetric problem, existence of isoperimetric regions, nonnegative Ricci curvature, Euclidean volume growth.}
\begin{document}

\maketitle

\begin{abstract}
In this paper we provide new existence results for isoperimetric sets of large volume in Riemannian manifolds with nonnegative Ricci curvature and Euclidean volume growth. We find sufficient conditions for their existence in terms of the geometry at infinity of the manifold. 

As a byproduct we show that isoperimetric sets of big volume always exist on manifolds with nonnegative sectional curvature and Euclidean volume growth.

Our method combines an asymptotic mass decomposition result for minimizing sequences, a sharp isoperimetric inequality on nonsmooth spaces, and the concavity property of the isoperimetric profile. The latter is new in the generality of noncollapsed manifolds with Ricci curvature bounded below.
\end{abstract}

\tableofcontents

\section{Introduction}

Given a Riemannian manifold $(M^n,g)$ of dimension $n\ge 2$ we denote by $\dist$, $\vol$, and $\ric$ the Riemannian distance, the volume measure, and the Ricci tensor of $(M^n,g)$, respectively.

For any $V \in [0,\vol(M^n))$, the \emph{isoperimetric problem} consists in studying the minimization problem
\begin{equation}
  I_{(M^n,g)}(V):=\inf\{P(\Omega):\text{$\Omega\subset  M^n$ set of finite perimeter with $\vol(\Omega)=V$}\} \, ,  
\end{equation}
where $P(\Omega)$ denotes the perimeter of $\Omega$, see \cref{sec:DefPreliminaryResults}. We shall drop the superscript $n$ in $(M^n,g)$ and the subscript $(M^n,g)$ in $I_{(M^n,g)}$ when there is no risk of confusion. The function $I_{(M^n,g)}$ is the so-called \textit{isoperimetric profile} of $(M^n,g)$. Any set of finite perimeter $E\subset M^n$ with $\vol(E)=V$ and $P(E)=I_{(M^n,g)}(E)$ is called \textit{isoperimetric set} or \textit{isoperimetric region}.

The problem of the existence of isoperimetric regions in the setting of noncompact manifolds is a hard problem that has seen several important progresses in the last years. Let us just mention some of the most important contributions in the field related to the topics of this work. Major results in the application of a direct method for proving existence of isoperimetric sets in manifolds with lower bounds on the Ricci curvature are contained \cite{Nar14, MondinoNardulli16}. The methods employed there have been generalized in \cite{AFP21}. As we are going to see, such generalization will be fundamental for the present work. In dimension $2$, a complete positive answer to the existence issue of isoperimetric sets under nonnegative curvature has been given in \cite{RitoreExistenceSurfaces01}. The existence of isoperimetric sets in $3$-manifolds with nonnegative scalar curvature and asymptotically flat asymptotics has been established in \cite{Carlotto2016}. Existence results for isoperimetric sets of large volumes in asymptotically flat manifolds were also obtained in \cite{EichmairMetzger, EichmairMetzger2}, in asymptotically hyperbolic spaces in \cite{Chodosh2016}, and in the asymptotically conical case in \cite{ChodoshEichmairVolkmann17}. When the ambient space is a nonnegatively Ricci curved cone, isoperimetric regions exist for any given volume and are actually characterized \cite{MorganRitore02}. 
In Euclidean solid cones, the problem has been investigated in \cite{RitRosales04}. In more general convex bodies, it is treated in \cite{LeonardiRitore}.

Differential properties of the isoperimetric profile, some of which are generalized in the present paper, have been studied in \cite{BavardPansu86, MorganJohnson00, Bayle03, Bayle04, BayleRosales}. A regularity theory for isoperimetric sets was established for example in \cite{Tama82, GonzalezMassariTamanini, Xia05, morgan2003regularity}, and, recently, it has been partly generalized in \cite{AntonelliPasqualettoPozzetta21}.

\subsection{Main existence results}

In this paper we provide new existence results for isoperimetric regions of large volume in the setting of manifolds with nonnegative Ricci curvature and \textit{Euclidean volume growth}, i.e., such that
\begin{equation*}
    \mathrm{AVR}(M^n,g):=\lim_{r\to +\infty}\frac{\vol(B_r(x))}{\omega_nr^n} \in (0,1] \, ,
\end{equation*}
where $\omega_n$ is the volume of the $n$-dimensional unit ball. The above limit exists as a consequence of the Bishop--Gromov monotonicity (cf.\! \cref{thm:BishopGromov}).
The same notion can be given for $\CD(0,n)$ metric measure spaces $(X,\dist,\meas)$ (replacing $\vol$ with $\meas$),
a class of nonsmooth spaces with a synthetic notion of nonnegative Ricci curvature and dimension bounded above by $n>0$, see \cref{sub:RCD}.

The Euclidean volume growth assumption implies that $(M^n,g)$ is \emph{noncollapsed}, i.e. there exists $v>0$ such that $\vol(B_1(x))\geq v$ for every $x\in M$. The latter can be shown to be equivalent to $I_{(M^n,g)}(V)\neq 0$ for some (and in fact for any) $V>0$ (cf.\! \cref{lem:NonSoDoveMetterlo}). Hence noncollapsedness is necessary for the existence of isoperimetric regions (see also the counterexamples discussed in \cite[Section 4.3]{AFP21}).

Before stating our first result we recall that $(X,\dist,\meas)$ is said to \emph{split (a line)} if it is isometric as a metric measure space to $(\R \times Y, \dist_\R \otimes \dist_Y, \meas_\R \otimes \meas_Y)$ for some $(Y,\dist_Y,\meas_Y)$. Here $\dist_{\R^k}, \meas_{\R^k}$ are just the Euclidean distance and the Lebesgue measure on $\R^k$, respectively, for any $k\in\N$.

\begin{thm}\label{thm:Principale} 
Let $(M^n,g)$ be a complete noncompact noncollapsed Riemannian manifold with $\ric\geq 0$. We write 
\begin{equation*}
  (M^n,g) =  (\mathbb R^k\times N^{n-k},g_{\mathbb R^k}+ g_N) \, ,
\end{equation*}
where $(N^{n-k}, g_N)$ does not split a line, and $0\le k\le n$.
Assume that there exists $\varepsilon>0$ such that every
pmGH limit $(X_\infty,\dist_\infty,\meas_\infty,x_\infty)$ of sequences $\{(M,\dist,\vol ,p_i)\}_{i\in\mathbb N_{\geq 0}}$ with $p_i:=(0,x_i)\in M$, and $\dist_N(x_i,x_0)\to+\infty$ as $i\to +\infty$, satisfies
$$
\mathrm{AVR}(X_\infty,\dist_\infty,\meas_\infty)\geq\mathrm{AVR}(M^n,g) + \varepsilon.
$$  
Then there exists $V_0>0$ such that for every $V\geq V_0$ there exists an isoperimetric region of volume $V$ in $M$.
\end{thm}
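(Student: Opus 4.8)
The plan is to run a direct method on a minimizing sequence for $I(V)$, but the sequence need not converge, so the first step is to invoke the asymptotic mass decomposition result from \cite{AFP21} (the generalization mentioned in the introduction): a minimizing sequence $\{E_i\}$ for $I(V)$ splits, up to subsequence, into a ``converging part'' $E^0$ living in $M$ itself, carrying volume $V_0'\in[0,V]$ and perimeter $I_M(V_0')$, plus finitely many ``pieces at infinity'' that converge to isoperimetric regions of volumes $V_1,\dots,V_N$ with $\sum_{j=0}^N V_j = V$ in pmGH limits $(X_j,\dist_j,\meas_j,x_j)$ of the manifold based at points escaping to infinity, and with $I_M(V)=I_M(V_0')+\sum_{j\ge1} I_{X_j}(V_j)$. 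The goal is to show that for $V$ large all the mass stays in the converging part, i.e. $V_0'=V$, which yields the isoperimetric region.

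The second step is to quantify the cost of losing mass at infinity. By the hypothesis, every pmGH limit $(X_\infty,\dist_\infty,\meas_\infty,x_\infty)$ based at points with $\dist_N(x_i,x_0)\to\infty$ satisfies $\mathrm{AVR}(X_\infty)\ge \mathrm{AVR}(M)+\varepsilon$. Each such limit is a $\CD(0,n)$ space with Euclidean volume growth, so by the sharp isoperimetric inequality on nonsmooth spaces (the nonsmooth Agostiniani--Fogagnolo--Mazzieri / Brendle-type inequality alluded to in the abstract and presumably recalled in \cref{sub:RCD}) one has $I_{X_j}(v)\ge n\,\omega_n^{1/n}\,\mathrm{AVR}(X_j)^{1/n}\,v^{(n-1)/n}$, and more importantly a matching \emph{upper} control on $I_M$ from the concavity of the isoperimetric profile: since $I_M$ is concave (this is the new differential-property input, valid for noncollapsed $\ric\ge0$ manifolds) and $I_M(V)/V^{(n-1)/n}\to n\omega_n^{1/n}\mathrm{AVR}(M)^{1/n}$ as $V\to\infty$, concavity forces $I_M(V)\le n\omega_n^{1/n}\mathrm{AVR}(M)^{1/n}V^{(n-1)/n}$ for all $V$. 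Crucially, one must also account for the splitting $M=\R^k\times N$: the pieces at infinity may themselves split further lines coming from the $\R^k$ factor, but the relevant $\mathrm{AVR}$ is unaffected by splitting Euclidean factors, so the gain $\varepsilon$ survives, and the converging part $X_0$ (if the base point does not escape in $N$) is just $M$ again with $\mathrm{AVR}(X_0)=\mathrm{AVR}(M)$.

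The third step combines these: if a positive fraction $V_j>0$ of volume sits in a genuine piece at infinity $X_j$ with $\mathrm{AVR}(X_j)\ge\mathrm{AVR}(M)+\varepsilon$, then
\begin{equation*}
  I_M(V) = I_M(V_0') + \sum_{j\ge1} I_{X_j}(V_j) \ge n\omega_n^{1/n}\Big(\mathrm{AVR}(M)^{1/n}(V_0')^{\frac{n-1}{n}} + (\mathrm{AVR}(M)+\varepsilon)^{1/n}\sum_{j\ge1} V_j^{\frac{n-1}{n}}\Big),
\end{equation*}
whereas the concavity bound gives $I_M(V)\le n\omega_n^{1/n}\mathrm{AVR}(M)^{1/n} V^{\frac{n-1}{n}}$. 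Using superadditivity of $v\mapsto v^{(n-1)/n}$ is the wrong direction, so instead one uses its concavity the other way: one shows that replacing the split $(V_0',V_1,\dots)$ with total mass concentrated appropriately produces a strict surplus proportional to $\varepsilon$ and to $V^{(n-1)/n}$ whenever $\sum_{j\ge1}V_j$ is a fixed fraction of $V$; since the number of pieces $N$ is uniformly bounded (again from \cite{AFP21}) and the smallest nontrivial isoperimetric volume in any limit space is bounded below, for $V\ge V_0$ large enough these two inequalities are incompatible unless $\sum_{j\ge1}V_j=0$. Hence $V_0'=V$ and the minimizing sequence converges (in the pmGH sense, hence strongly in $M$ since the limit is $M$ itself) to an isoperimetric region of volume $V$.

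The main obstacle I expect is making the comparison in the third step genuinely quantitative and uniform in $V$: one needs the concavity-derived upper bound on $I_M$ to be \emph{strictly} below the sum-of-pieces lower bound by a margin that does not degenerate as $V\to\infty$, which requires carefully tracking how the $\varepsilon$-gain in $\mathrm{AVR}$ of the pieces at infinity interacts with the sublinear power $V^{(n-1)/n}$ and with the possibility that almost all the mass escapes ($V_0'\approx 0$). Establishing the concavity of $I_M$ in the noncollapsed $\ric\ge0$ setting — together with its sharp asymptotics $I_M(V)\sim n\omega_n^{1/n}\mathrm{AVR}(M)^{1/n}V^{(n-1)/n}$ — is the other serious ingredient; here one would combine a second-variation/Heintze--Karcher argument with the Bishop--Gromov comparison and an approximation by the asymptotic cone, and the delicate point is that isoperimetric regions of every volume may fail to exist a priori, so the concavity must be proved for the profile directly (e.g. via a comparison using the asymptotic mass decomposition and the monotonicity of $I/V^{(n-1)/n}$) rather than via the classical smooth second-variation formula on minimizers.
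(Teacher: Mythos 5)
Your skeleton (asymptotic mass decomposition from \cite{AFP21}, the sharp isoperimetric inequality on the $\CD(0,n)$ limits with the $\varepsilon$-gain, concavity of the profile) is the same as the paper's, but the quantitative core of your step 3 does not work. Concavity does \emph{not} force $I_{M}(V)\le n\omega_n^{1/n}\mathrm{AVR}(M^n,g)^{1/n}V^{(n-1)/n}$: a concave function can lie strictly above its asymptotic conical profile for every finite $V$ (add a bounded concave bump to $cV^{(n-1)/n}$), and in fact this upper bound is generally false — combined with \cref{thm:IsoperimetriSharpBrendle} it would force $I(V)\equiv n(\omega_n\mathrm{AVR})^{1/n}V^{(n-1)/n}$, whereas for small $V$ the profile is Euclidean-like and strictly larger when $\mathrm{AVR}<1$. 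What concavity actually buys in the paper is the derivative asymptotics \eqref{eq:AsintoticaI'}, $v^{1/n}I'_+(v)\to(n-1)(\omega_n\theta)^{1/n}$ with $\theta:=\mathrm{AVR}(M^n,g)$, and this is precisely the ingredient your proposal lacks. With only the asymptotic upper bound $I(V)=n(\omega_n\theta)^{1/n}V^{(n-1)/n}(1+o(1))$, your comparison yields a contradiction only if the volume escaping along $N$ is a definite fraction of $V$: a single piece of, say, unit volume escaping along $N$ produces an $O(1)$ surplus, which is swallowed by the $o(V^{(n-1)/n})$ slack. Your fallback (``number of pieces uniformly bounded'', ``smallest nontrivial isoperimetric volume bounded below'') is unsubstantiated — nothing bounds the escaping volumes from below a priori, and $\overline N$ may even be infinite in \cref{thm:MassDecomposition}. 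The paper closes exactly this gap with \cref{lem:Step1}: integrating \eqref{eq:AsintoticaI'} gives $I(V)-I(V-\delta)<n(\omega_n(\theta+\varepsilon/2))^{1/n}\delta^{(n-1)/n}$ for $\delta\le V/2$, while the decomposition plus the $\varepsilon$-improved inequality on the limit space gives $I(V)\ge n(\omega_n(\theta+\varepsilon))^{1/n}\delta^{(n-1)/n}+I(V-\delta)$; hence any $N$-escaping piece must carry volume $>V/2$, so there is at most one, it carries at least $V/3$ of the mass, the surplus becomes proportional to $V^{(n-1)/n}$, and \eqref{eq:AsintoticaI} gives the contradiction for $V$ large.

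The treatment of the $\R^k$ factor is also incorrect. Your conclusion ``$V_0'=V$'' cannot be forced: pieces escaping only along $\R^k$ have pmGH limit isometric to $(M^n,g)$ itself (translation invariance), with no $\mathrm{AVR}$ gain, so they cannot be excluded — already in $\R^n$ a minimizing sequence may translate all of its mass to infinity. Moreover your displayed lower bound applies the factor $(\mathrm{AVR}(M)+\varepsilon)^{1/n}$ to \emph{every} piece at infinity, which the hypothesis only justifies for pieces whose base points diverge in $N$. The paper instead proves only that no mass escapes along $N$, and recovers the $\R^k$-escaping pieces by translating the limit isoperimetric regions (bounded by \cref{cor:IsopBounded}) back into $M$ along the Euclidean factor and taking a disjoint union, which realizes $I(V)$ as in \eqref{condizione-critica}; the isoperimetric set is this reassembled union, not the $L^1$-limit of the original minimizing sequence.
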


Recall that any sequence $\{(M,\dist,\vol ,p_i)\}_{i\in\mathbb N}$ admits limit points in the pmGH topology as a consequence of the Gromov compactness theorem (see \cref{def:GHconvergence} and \cref{rem:GromovPrecompactness}).

The hypothesis in \cref{thm:Principale} is verified when $\mathrm{AVR}(M^n,g)>0$ and the manifold $N$ possesses no asymptotic cones that split a line (see \cref{def:AsymptoticCone} for the notion of asymptotic cone). Hence we deduce the following.

\begin{thm}\label{cor:Princ2}
Let $(M^n,g)$ be a complete Riemannian manifold with $\ric\geq 0$ and $\mathrm{AVR}(M^n,g)>0$. We write 
\begin{equation*}
  (M^n,g) =  (\mathbb R^k\times N^{n-k},g_{\mathbb R^k}+ g_N) \, ,
\end{equation*}
where $(N^{n-k}, g_N)$ does not split a line, and $0\le k\le n$.  
If no asymptotic cone of $N$ splits a line, then there is $V_0>0$ such that for every $V\geq V_0$ there exists an isoperimetric region of volume $V$ in $M$.
\end{thm}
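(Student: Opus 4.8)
The plan is to deduce \cref{cor:Princ2} from \cref{thm:Principale} by verifying that, under the present hypotheses, the $\varepsilon$-gap condition on the $\mathrm{AVR}$ of pmGH limits along the $N$-factor is automatically satisfied. First I would observe that since $(M^n,g) = (\mathbb{R}^k \times N^{n-k}, g_{\mathbb{R}^k} + g_N)$ with $\ric \ge 0$ and $\mathrm{AVR}(M^n,g) > 0$, the Bishop--Gromov inequality gives $\mathrm{AVR}(N^{n-k}, g_N) > 0$ as well, and these two quantities are related by a fixed multiplicative constant (the ratio of ball volumes accounting for the Euclidean factor). Any pmGH limit $(X_\infty, \dist_\infty, \meas_\infty, x_\infty)$ of $\{(M, \dist, \vol, (0,x_i))\}$ with $\dist_N(x_i, x_0) \to +\infty$ splits off the $\mathbb{R}^k$ factor and is therefore isometric to $\mathbb{R}^k \times Y_\infty$, where $Y_\infty$ is a pmGH limit of $\{(N, \dist_N, \vol_N, x_i)\}$; consequently $\mathrm{AVR}(X_\infty, \dist_\infty, \meas_\infty)$ and $\mathrm{AVR}(Y_\infty)$ are related by the same multiplicative constant, so it suffices to produce a uniform gap between $\mathrm{AVR}(Y_\infty)$ and $\mathrm{AVR}(N^{n-k}, g_N)$.

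The key point is then a dichotomy for such limits $Y_\infty$. By Bishop--Gromov monotonicity and lower semicontinuity of $\mathrm{AVR}$ under pmGH convergence (the density at infinity can only increase in the limit), one always has $\mathrm{AVR}(Y_\infty) \ge \mathrm{AVR}(N^{n-k}, g_N)$. I would argue that equality $\mathrm{AVR}(Y_\infty) = \mathrm{AVR}(N^{n-k},g_N)$ forces $Y_\infty$ to be a metric cone — this is the rigidity case in the volume cone/metric cone theorem of Cheeger--Colding (in the $\RCD$ generality, De Philippis--Gigli), since the volume ratio $\meas(B_r)/(\omega_n r^n)$ is then constant and equal to its limit. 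Moreover this cone must be an asymptotic cone of $N^{n-k}$: blowing down $Y_\infty$ at its vertex recovers the same $\mathrm{AVR}$, and iterating the blow-down/limit construction (or directly, since $Y_\infty$ with constant volume ratio is already its own asymptotic cone) yields an asymptotic cone of $N$. By hypothesis, no asymptotic cone of $N$ splits a line; but then I must also rule out the competing possibility that $Y_\infty$ itself splits a line with $\mathrm{AVR}(Y_\infty) > \mathrm{AVR}(N,g_N)$ — actually the logic is cleaner: the hypothesis that no asymptotic cone of $N$ splits a line, combined with the fact that $N$ does not split a line, should be used to show that a positive gap must occur.

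Let me restructure: suppose for contradiction that no such $\varepsilon$ works. Then there is a sequence of pmGH limits $Y_\infty^{(j)}$ with $\mathrm{AVR}(Y_\infty^{(j)}) \downarrow \mathrm{AVR}(N^{n-k}, g_N)$. A diagonal argument extracts a further pmGH limit $Y_\infty^\ast$ of (a subsequence of) the $Y_\infty^{(j)}$ — which is still a limit of rescalings/translations of $N$ and hence, by a compactness-of-limits argument, can be taken to be an asymptotic cone of $N$ or at least an $\RCD(0,n-k)$ space of the same $\mathrm{AVR}$ — with $\mathrm{AVR}(Y_\infty^\ast) = \mathrm{AVR}(N^{n-k},g_N)$, whence by the volume-cone-implies-metric-cone rigidity $Y_\infty^\ast$ is a cone, and being a cone arising this way it is an asymptotic cone of $N$. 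If this asymptotic cone is $0$-dimensional or a half-line the argument needs care, but in the relevant range it will split a line unless it is a genuine cone with one-dimensional cross-section; in any case the assumption that \emph{no} asymptotic cone of $N$ splits a line, together with $N$ not splitting a line, is designed precisely to exclude the degenerate situation and force a contradiction. Thus the gap exists, \cref{thm:Principale} applies, and we conclude. The main obstacle I anticipate is making rigorous the claim that the "worst" limit $Y_\infty^\ast$ achieving equality in the $\mathrm{AVR}$ comparison is genuinely an asymptotic cone of $N$ (as opposed to merely an $\RCD$ space with conical volume ratio sitting at infinite distance), and correctly handling the interplay between "$N$ does not split a line" and "no asymptotic cone of $N$ splits a line" — in particular ensuring that the rigidity case cannot produce a limit that splits a line without that splitting descending to an asymptotic cone of $N$.
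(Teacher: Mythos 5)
Your overall plan (verify the $\varepsilon$-gap hypothesis of \cref{thm:Principale} by a contradiction/compactness argument using volume-cone rigidity and the no-splitting assumption) is the right one, but the two key steps as you state them do not work. First, the rigidity claim is unjustified: if $\mathrm{AVR}(Y_\infty)=\mathrm{AVR}(N)$, all you know about the Bishop--Gromov ratio $r\mapsto \meas_\infty(B_r(y_\infty))/(\omega_{n-k}r^{n-k})$ is that it is nonincreasing, bounded below by $\mathrm{AVR}(N)$, and tends to $\mathrm{AVR}(N)$ at infinity; this is compatible with a strictly decreasing ratio (the density at the basepoint can be anything up to $1$), so the volume ratio need not be constant and the volume-cone-implies-metric-cone theorem does not apply. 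In the paper the constancy is obtained only after passing to an asymptotic cone $C'=\R^k\times C$ of $M$ and to a point $p$ at distance $1$ from the line of tips $\R^k\times\{v_\infty\}$: along a contradicting sequence of cones and points, the density at $p$ is $\le\theta_X$ by lower semicontinuity of the density under pmGH convergence, it is $\ge\theta_X$ by Bishop--Gromov, and the ratio from $p$ at large radii is again $\theta_X$ because $p$ is at finite distance from a tip where the ratio is exactly constant; only this double pinching makes the ratio from $p$ constant. Second, even granting that your limit were a metric cone and an asymptotic cone of $N$, you never derive that it \emph{splits a line}, so there is nothing to contradict: your hypothesis only excludes splitting asymptotic cones. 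The paper's contradiction comes from a mechanism your dichotomy never produces, namely a \emph{second} cone tip: the rigidity shows $\R^k\times C$ is a metric cone with respect to the off-line point $p$, and a cone with two distinct tips splits a line (this is where \cite[Proposition 1.18]{AntBruSem19} enters), forcing the asymptotic cone $C$ of $N$ to split, which is the contradiction.

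There is also the structural issue you yourself flag: pmGH limits of $(N,\dist_N,\vol_N,x_i)$ along diverging basepoints at scale one are in general \emph{not} asymptotic cones of $N$, and a diagonal limit of such limits is again only a limit at infinity, so your ``worst limit $Y_\infty^\ast$'' cannot simply be declared to be an asymptotic cone. The paper decouples the two objects via \cref{lem:DensitaStaccata}: item (i) establishes a uniform gap $\varepsilon$ for the \emph{densities} of points at distance $1$ from the line of tips in asymptotic cones (using that the family of asymptotic cones is pmGH-precompact and closed under limits, which is where the compactness argument legitimately lives), and item (ii) transfers this density gap to a lower bound $\mathrm{AVR}(X_\infty)\ge\mathrm{AVR}(M^n,g)+\varepsilon$ for the limits at infinity, by blowing down at rate $\rho_i=\dist(x_0,\widetilde x_i)$, using volume convergence (\cref{thm:volumeconvergence}) at the intermediate scales $\rho_i\delta$, and Bishop--Gromov on $M$. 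Reorganizing your argument along these two steps, and inserting the two-tips splitting mechanism in place of your equality-case dichotomy, is what is needed to make the proof complete.
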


It is well-known that when $(M^n,g)$ has nonnegative sectional curvature
the asymptotic cone is unique and it splits if and only if the manifold splits. Since we were not able to find an explicit proof of this fact in the literature, we included one in \cref{lem:IfAndOnlyIfSplit}. This phenomenon marks an important difference with the nonnegative Ricci curvature, see  \cite[Theorem 1.4]{ColdingNaber} and \cref{subsec:example} below. In turn, we derive the following existence theorem.

\begin{cor}\label{thm1}
Let $(M^n,g)$ be a complete Riemannian manifold with nonnegative sectional curvature and Euclidean volume growth. Then there exists $V_0>0$ such that for every $V\geq V_0$ there exists an isoperimetric region of volume $V$.
\end{cor}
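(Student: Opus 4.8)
The plan is to deduce \cref{thm1} from \cref{cor:Princ2} by verifying that its hypotheses are met under the stronger assumption of nonnegative sectional curvature and Euclidean volume growth. First I would use the Cheeger--Gromoll splitting theorem to write $(M^n,g) = (\mathbb R^k \times N^{n-k}, g_{\mathbb R^k} + g_N)$ with $(N^{n-k},g_N)$ a complete manifold that does not split a line and still has nonnegative sectional curvature (being a Riemannian factor of $M$). Since Euclidean volume growth passes to the factor $N$—indeed $\mathrm{AVR}(M) = \mathrm{AVR}(\mathbb R^k \times N)$ is a positive multiple of $\mathrm{AVR}(N)$ by a Fubini-type computation on the volume of balls—we get $\mathrm{AVR}(N) > 0$ as well.

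The core point is then to show that no asymptotic cone of $N$ splits a line. Here I would invoke the cited fact, proved in \cref{lem:IfAndOnlyIfSplit}, that for a manifold with nonnegative sectional curvature the asymptotic cone is \emph{unique} and it splits a line if and only if the manifold itself splits a line. Applying this to $N$: if some asymptotic cone of $N$ split a line, then by uniqueness \emph{the} asymptotic cone splits, hence $N$ splits a line, contradicting the choice of $N$. Therefore no asymptotic cone of $N$ splits a line, and the hypotheses of \cref{cor:Princ2} are satisfied. That theorem then yields $V_0 > 0$ such that isoperimetric regions of volume $V$ exist for all $V \geq V_0$.

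The main obstacle—or rather the only nontrivial input beyond bookkeeping—is the rigidity statement about asymptotic cones under nonnegative sectional curvature, i.e.\ \cref{lem:IfAndOnlyIfSplit}. The existence and uniqueness of the asymptotic cone in the nonnegatively curved case is classical (it follows from Toponogov's theorem and the monotonicity of rescaled comparison triangles), but the ``splits iff splits'' equivalence requires an argument: the nontrivial direction is that a splitting of the tangent cone at infinity forces a splitting of $M$, which one typically extracts by producing a line in $M$ from the Euclidean factor of the cone via a limiting/Arzelà--Ascoli argument on rescaled geodesics and then applying Cheeger--Gromoll. Since the excerpt provides this as \cref{lem:IfAndOnlyIfSplit}, for the purposes of this corollary it may be cited directly, and the remaining steps—stability of $\mathrm{AVR}>0$ under taking a metric factor, and the logical reduction through uniqueness of the cone—are routine.
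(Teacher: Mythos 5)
Your proposal is correct and follows the paper's own route: the paper deduces the corollary directly from \cref{lem:IfAndOnlyIfSplit} applied to the non-splitting factor together with \cref{cor:Princ2}, exactly as you do. The extra details you supply (nonnegative sectional curvature and positive $\mathrm{AVR}$ passing to the factor $N$, and the reduction via uniqueness of the asymptotic cone) are the correct bookkeeping the paper leaves implicit.
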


\subsection{Concavity of the isoperimetric profile}

We now state our last result regarding concavity properties of the isoperimetric profile. 
This will be an important ingredient in the proof of the previous existence theorems and has an independent interest.

\begin{thm}\label{thm:Concavity}
Suppose that $(M^n,g)$ is noncollapsed and that $\ric \ge (n-1)K$ for $K \le 0$.
Then for any $\widetilde V>0$ there is $\delta>0$ and $C\ge0$ such that the function $V\mapsto I_{(M^n,g)}(V) - CV^2$ is concave on $(\widetilde V- \delta,\widetilde V+ \delta)$. Moreover, $I_{(M^n,g)}$ is twice differentiable almost everywhere and at any point $V$ of twice differentiability it holds the estimate
\begin{equation}\label{eq:EstimateI''}
    I_{(M^n,g)}''(V) \le \frac{(n-1)|K|}{I_{(M^n,g)}(V)}.
\end{equation}
If $K=0$ then $I_{(M^n,g)}$ is concave on $(0,+\infty)$.
\end{thm}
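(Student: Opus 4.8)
\textbf{Proof plan for \cref{thm:Concavity}.}

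The plan is to run a comparison argument: to compare $I_{(M^n,g)}$ locally with the isoperimetric profile of the model space, by means of the second variation of area formula and a well-chosen variation. First I would recall (or establish, following the quoted regularity results and the noncollapsedness hypothesis) that near a fixed volume $\widetilde V$ the infimum defining $I(V)$ is essentially attained along a minimizing sequence whose concentrated part is a boundary that is smooth up to a set of codimension $\geq 8$, so that one can perform second variations. Then, given an isoperimetric set (or almost-minimizer) $\Omega$ of volume $V$ near $\widetilde V$, I would deform it by the normal flow generated by a function $u$ with $\int_{\partial^*\Omega} u \,d\mathcal H^{n-1} \neq 0$, so that the deformed sets sweep out an interval of volumes around $V$; computing the first and second derivatives of perimeter and volume along this flow yields the differential inequality
\begin{equation*}
  I''(V) \le \frac{1}{I(V)^2}\left( -\int_{\partial^*\Omega} \big(\mathrm{Ric}(\nu,\nu) + |\sff|^2\big)\, d\mathcal H^{n-1}\right) \cdot \frac{1}{\text{(normalization)}},
\end{equation*}
in the by-now standard weak/viscosity sense; using $\ric \ge (n-1)K$ and $|\sff|^2 \ge 0$ gives $I''(V) \le \frac{(n-1)|K|}{I(V)}$ in the appropriate barrier sense, which is exactly \eqref{eq:EstimateI''}. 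The subtlety here is that there need not exist an honest isoperimetric set of volume $V$ — this is, after all, the very existence question the paper is addressing — so the argument must be carried out with an almost-minimizing sequence and a limiting procedure, or by replacing $\Omega$ by the minimizer of an obstacle/penalized problem on a pmGH limit, and then passing the inequality back to $M$ via lower semicontinuity of perimeter and continuity of the profile.

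From the pointwise differential inequality one upgrades to the semiconcavity statement. The inequality $I'' \le \frac{(n-1)|K|}{I}$ holds in the barrier (comparison) sense at every $V$ — at every point one constructs an upper barrier that is $C^2$ and satisfies the ODE inequality with a slightly worse constant. Since $I$ is continuous and locally bounded below away from $0$ on $(\widetilde V - \delta, \widetilde V + \delta)$ (noncollapsedness: $I > 0$, cf.\ \cref{lem:NonSoDoveMetterlo}), the right-hand side is bounded above by some constant $2C$ on that interval, so $I$ satisfies $I'' \le 2C$ in the barrier sense there; a standard maximum-principle argument then shows $V \mapsto I(V) - CV^2$ is concave on $(\widetilde V - \delta, \widetilde V + \delta)$. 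Being locally semiconcave, $I$ is then twice differentiable almost everywhere by Alexandrov's theorem, and at points of twice differentiability the classical and barrier second derivatives agree, giving \eqref{eq:EstimateI''} in the literal sense.

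Finally, the case $K = 0$. Here the differential inequality reads $I'' \le 0$ in the barrier sense on all of $(0,+\infty)$, so $I$ is concave on $(0,+\infty)$ directly, with no need to localize or introduce the quadratic correction; one only has to check that the comparison/maximum-principle argument goes through globally, which it does since the inequality is now scale-free and requires no lower bound on $I$.

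\textbf{Main obstacle.} The principal difficulty, and the point where the noncollapsed-$\RCD$ machinery from \cite{AFP21} and the sharp isoperimetric inequality mentioned in the abstract must enter, is the absence of a genuine isoperimetric region of the given volume: one cannot simply second-vary a minimizer. The argument has to be made on limits of minimizing sequences — objects living in (possibly several) pmGH limit spaces with only synthetic Ricci bounds — where one needs second-variation-type information and the regularity of isoperimetric boundaries in the noncollapsed $\RCD$ setting, and then transfer the resulting inequality back to $I_{(M^n,g)}$ by semicontinuity. Handling this nonsmooth comparison rigorously, rather than the formal computation on a smooth minimizer, is the heart of the proof.
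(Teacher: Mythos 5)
Your plan correctly identifies the crux --- one cannot second-vary an isoperimetric region that may not exist --- but it does not resolve it, and the resolution you sketch would not go through as stated. You propose to run the second variation on the isoperimetric regions $Z_j$ produced in the pmGH limit spaces by the asymptotic mass decomposition (\cref{thm:MassDecomposition}), or on minimizers of a penalized/obstacle problem \emph{on a pmGH limit}. Those limit spaces are only noncollapsed $\RCD(K,n)$ spaces: there is no smooth structure, no second fundamental form, no second variation formula, and no regularity theory giving a boundary that is a smooth hypersurface up to a codimension-$8$ set, so the computation at the heart of your argument cannot be performed there with existing tools. Moreover, even granting a second-order differential inequality for the profiles $I_{X_j}$ of the limit spaces, the decomposition \eqref{eq:UguaglianzeIntro} is an identity at a single volume, not an identity of profiles, so transferring an inequality on $\overline{D}^2$ back to $I_{(M^n,g)}$ at $\widetilde V$ (your ``pass the inequality back to $M$ via lower semicontinuity'') requires building competitors in $M$ at all nearby volumes out of pieces living at infinity; this step is not addressed and is not automatic.

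The paper's proof avoids the nonsmooth setting altogether, and this is the missing idea: one penalizes the perimeter \emph{on $M$ itself}, setting $P_k(\Omega)=P(\Omega)+\int_\Omega f_k\,\de\vol$ with $f_k$ smooth, $|\nabla f_k|\le\eps_k$, $f_k\equiv 0$ on large balls and $f_k\to+\infty$ at infinity, as in \eqref{eq:DefPk}. The confinement forced by $f_k$ yields, for every $k$ and every volume, a \emph{bounded} volume-constrained minimizer inside the smooth manifold $M$, with classical regularity up to a codimension-$8$ singular set (\cref{lem:MinimizingPk}); classical first and second variations of $P_k$, tested with the Sternberg--Zumbrun-type cutoffs of \cref{lem:ApprossimantiVariazione}, give
\begin{equation*}
\overline{D}^2 I_k(\widetilde V)\;\le\;\frac{\eps_k+(n-1)|K|}{I(\widetilde V)},
\end{equation*}
and since $I_k\to I$ locally uniformly from above (\cref{lem:ApprossimazioneProfili}) the semiconcavity, the a.e.\ twice differentiability, the bound \eqref{eq:EstimateI''}, and the concavity for $K=0$ all follow in the limit $k\to\infty$, with the penalty contributing only the vanishing error $\eps_k$. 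Your second half (barrier inequality $\Rightarrow$ local semiconcavity $\Rightarrow$ Alexandrov a.e.\ twice differentiability, and the global statement for $K=0$) is fine once such an inequality is established, but as written the proposal lacks the device that makes the variational computation legitimate, so the proof is incomplete at its central step.
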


The previous result is obtained under the natural assumptions for the study of the isoperimetric problem, namely noncollapsedness and Ricci bounded below. In particular, differently from the classical results in \cite{BavardPansu86, MorganJohnson00, Bayle03, Bayle04, BayleRosales, MondinoNardulli16}, the existence of isoperimetric regions is not assumed \emph{a priori}, and, in fact, we will employ \cref{thm:Concavity} as a tool to provide existence. In order to prove \cref{thm:Concavity}, we introduce an approximation of the perimeter functional on the manifold $M$ by a sequence of penalized perimeters $P_k$ given by the sum of the usual perimeter and a potential term. The penalization in the definition of $P_k$, see \eqref{eq:DefPk}, implies the existence of volume constrained minimizers for $P_k$ for any $k$. Also, the profile $I_k$ corresponding to $P_k$, see \eqref{eq:DefIk}, converges to the isoperimetric profile $I_{(M^n,g)}$ of the manifold locally uniformly. Hence \cref{thm:Concavity} eventually follows by studying the concavity properties of $I_k$ by means of the existence of minimizers for $P_k$, and then by passing the estimates to the limit with respect to $k$.

As a consequence of \cref{thm:Concavity}, we get that under the same assumptions the isoperimetric profile is a locally Lipschitz function (\cref{cor:loclip}), which improves the result of local H\"{o}lder continuity contained in \cite{FloresNardulli20}.

As anticipated, \cref{thm:Concavity} has useful consequences for the the existence of isoperimetric regions in manifolds with nonnegative Ricci curvature. This is due to the fact that the concavity of the profile provides information on the asymptotic behavior of $I_{(M^n,g)}$ and $I'_{(M^n,g)}$, see \cref{lem:IsopAVR}. Under the additional assumption of Euclidean volume growth, we can further conclude that $I_{(M^n,g)}$ is strictly increasing (\cref{cor:ProfileIncreasing}). The latter result has to be compared with \cite{Rit17}, where the author considers hypotheses on the sectional curvature instead, but without assumptions on the asymptotic volume ratio.

\subsection{Strategy of proof of the existence results}
Broadly speaking the proof of \cref{thm:Principale} is based on the direct method of the Calculus of Variations and a compensated compactness argument.
Given $V>0$ we consider a minimizing sequence $E_i$ such that $\vol(E_i)=V$ and $\lim_{i\to \infty}P(E_i) = I(V)$, and we aim at studying limits in a generalized sense. Notice that the existence of a limit in $L^1(M^n,\vol)$ would immediately give the existence of isoperimetric regions of volume $V$, as a consequence of the lower semicontinuity of the perimeter.
The main enemy for the existence of the limit is the possibility of having pieces of $E_i$ escaping at infinity. 

The asymptotic decomposition result in \cite{AFP21} (see \cref{thm:MassDecomposition} below)
tells us that, for noncollapsed Riemannian manifolds with Ricci curvature bounded from below, the mass lost at infinity can be recovered by looking at subsets of metric measure spaces obtained as pmGH limit points of $(M^n,\dist, \vol, p_i)$, where $p_i\to \infty$ is a suitable sequence. 
The hypotheses in \cref{thm:Principale} are meant to control the isoperimetric profile of these limit spaces, with the aim of showing that escaping at infinity is not ``isoperimetrically convenient''. 
To this aim we employ the recent result \cite[Theorem 1.1]{BaloghKristaly}, showing that the isoperimetric constant on $\CD(0, n)$ spaces is, roughly speaking, realized by balls of infinite radius, and thus that it is explicitly related to the asymptotic volume ratio. This result generalizes earlier analogous inequalities in the smooth setting \cite{AgostinianiFogagnoloMazzieri,BrendleFigo,FogagnoloMazzieri,Johne}. In \cite{BaloghKristaly}, the isoperimetric inequality is actually obtained as an a priori weaker estimate on the Minkowksi content of the sets involved; in this paper, we give a refinement of that in terms of the perimeter, see \cref{thm:IsoperimetriSharpBrendle}. Notice that the pmGH limits at infinity of a noncollpased nonnegatively Ricci curved manifold are indeed $\CD(0,n)$ spaces. Our assumptions in  \cref{thm:Principale}, in particular, imply that for large enough volumes it is more convenient for the minimizing sequence not to lose mass at infinity along $N$. On the other hand, if part of the mass of any minimizing sequence is lost at infinity along the factor $\mathbb R^k$, it can be taken back by means of translations along the Euclidean factor.

To perform such a plan we need to know the exact asymptotic behaviour for large volumes of the isoperimetric profile $I$ and its derivative $I'$, for an arbitrary manifold with nonnegative Ricci curvature. The latter piece of information is used to ensure that every minimizing sequence of sets of sufficiently big volumes can lose at most one piece at infinity along $N$, see \cref{lem:Step1}. 

Loosely speaking, the behaviour at infinity of $I$ is the same as the isoperimetric profile of a cone with opening $\mathrm{AVR}(M^n,g)$. The computation of the asymptotics of $I$ is a direct consequence of Bishop--Gromov comparison theorem and the sharp isoperimetric inequality, while the computation of the asymptotics of $I'$ is a byproduct of the concavity property proven in \cref{thm:Concavity}. 

We finally stress that the extremal case in the hypothesis in \cref{thm:Principale}, i.e., when $\mathrm{AVR}(X_{\infty})=1$, happens when every limit $X_\infty$ is isometric to $\mathbb R^n$. In this simpler case, since the mass of a minimizing sequence lost along the factor $\mathbb R^k$ can be taken back as mentioned above, by a minor variation of the proof of \cite[Theorem 5.2]{AFP21} one even gets existence of isoperimetric regions for every volume.

\bigskip

Let us comment on the proof of \cref{cor:Princ2}. We show that, under the hypotheses of \cref{cor:Princ2}, the hypotheses of \cref{thm:Principale} are satisfied. This is done in two steps, which correspond to item (i) and (ii) of \cref{lem:DensitaStaccata}. First, if no asymptotic cone of $N$ splits a line, then the density of every point at distance $1$ from the line of the tips of every asymptotic cone of $M$ is uniformly bigger than $\mathrm{AVR}(M^n,g)$. The latter is due to a compactness argument and to the fact that if a point at distance $1$ from the line of the tips of an asymptotic cone $\mathbb R^k\times C$ of $M$ has density equal to $\mathrm{AVR}(M^n,g)$, then there is a line in $\mathbb R^k\times C$ which is not contained in $\mathbb R^k$ and then, by the splitting theorem, one has a splitting of the cone $C$, which is an asymptotic cone of $N$, resulting in a contradiction. Second, by means of the volume convergence theorem, a lower bound on the density of points at distance $1$ from the line of the tips of the asymptotic cones is readily seen to imply a lower bound on the $\mathrm{AVR}$ of the pmGH limits at infinity of the manifold along $N$.

The result in item (i) of \cref{lem:DensitaStaccata} discussed above allows to get also other nontrivial existence results for the isoperimetric problem. It can be proved, see \cref{thm:Further1}, that for manifolds as in \cref{cor:Princ2}, if every point at distance $1$ from the line of the tips of every asymptotic cone is regular, i.e., has density one, then isoperimetric regions exist for every volume. In particular this implies, see \cref{cor:FurtherExistence}, that if a Riemannian manifold with nonnegative Ricci curvature and Euclidean volume growth is such that every asymptotic cone has a smooth cross section, then isoperimetric regions exist for every volume. We point out that when the dimension is $2$ this partially recovers the aforementioned existence result of isoperimetric sets in nonnegative curvature due to Ritoré \cite{RitoreExistenceSurfaces01}.

\subsection{Sharpness and counterexamples}\label{subsec:example}

While the assumptions in \cref{thm:Principale} turn out to be always satisfied in the setting of manifolds with nonnegative sectional curvature, giving rise to \cref{thm1}, the following example borrowed from \cite[pp. 913-914]{KasueWashio} shows that this is not always the case on manifolds with nonnegative Ricci curvature. 
Let us consider the metric
\begin{equation*}
 g:=f(r)^2 dt^2 + dr^2 + \eta(r)^2 g_{\mathbb{S}^{k-1}}   
 \quad
 \text{for $(t,r,p) \in \R\times [0,+\infty) \times \mathbb{S}^{k-1} \simeq \R^{k+1}$, $k\ge 3$} \, ,
\end{equation*}
where
\begin{equation*}
   f(r) = (b + r^2)^{(\beta+2 - k)/2} + c \, 
   \quad 
   \alpha, \beta \in (0,1), \, b, c>1\, ,\,  k-\beta > 2 + \alpha\, ,
\end{equation*}
\begin{equation*}
    \eta(r) = \frac{1}{2}r + \frac{1}{2a} \int_0^r \int_t^\infty \xi(s) d s \, ,
\end{equation*}
for some smooth $\xi:[0, \infty) \to [0, \infty)$ satisfying $\xi(t) = t$ on $[0,1]$, $\xi >0$ on $[1,2]$ and $\xi(t)=t^{-1-\alpha}$ on $[2, \infty)$. The parameter $a>0$ is chosen such that $\eta'(0) = 1$.

It is possible to prove that $g$ has nonnegative Ricci curvature provided $b$ and $c$ are chosen big enough. It is easy to see that $(\R^{k+1}, g)$ has Euclidean volume growth and admits a unique asymptotic cone isometric to $\R \times C(\mathbb{S}^{k-1}_\rho)$, where $\rho= 1/2$ is the radius of $\mathbb{S}^{k-1}_\rho$, which is the cross section of the cone $C(\mathbb{S}^{k-1}_\rho)$. Observe that the asymptotic cone splits a line while $(\R^{k+1}, g)$ does not. Moreover, $g$ is invariant under the translation along the direction $\partial_t$. In particular, if $P_i=(t_i,r_i,p_i)$ satisfies $\sup_i r_i <+\infty$ and $|t_i| \to \infty$, we deduce that $(\R^{k+1}, \dist_g, \vol_g, P_i) \to (\R^{k+1}, \dist_g, \vol_g, 0)$ in the pmGH topology. Hence the assumptions of \cref{thm:Principale} are \emph{not} satisfied.

On the other hand, along sequences $Q_i=(t_i,r_i,p_i)$ with $r_i\to\infty$, one checks that $(\R^{k+1}, \dist_g, \vol_g,  Q_i)$ converges to the Euclidean space $(\R^{k+1}, \dist_{\rm eu}, \mathcal{L}^{k+1}, 0)$ in the pmGH topology. Therefore, by the asymptotic mass decomposition \cref{thm:MassDecomposition}, one has that the volume of a minimizing sequence lost at infinity ends up in isoperimetric regions in $(\R^{k+1},g_{\rm eu})$ or in $(\R^{k+1}, g)$. However, the presence of such limit regions in $(\R^{k+1},g_{\rm eu})$ easily contradicts the minimizing property of the initial sequence (cf. \cite[Theorem 5.1 \& 5.2]{AFP21} and \cite[Theorem 3.5]{MorganJohnson00}), while limit isoperimetric regions in $(\R^{k+1}, g)$ can be obviously brought back into the original $(\R^{k+1}, g)$, recovering the missing volume. Therefore we obtain that $(\R^{k+1}, g)$ has isoperimetric regions for \emph{any} volume.

All in all, we conclude that our assumptions in \cref{thm:Principale} are not necessary for the existence of isoperimetric sets in the setting of manifolds with nonnegative Ricci curvature.

To date we do not know whether manifolds with nonnegative Ricci curvature and Euclidean volume growth always admit isoperimetric regions for big volumes, and we plan to investigate further this problem in the future.

\subsection{Plan of the paper} 
In \cref{sec:DefPreliminaryResults} we introduce the preliminary material. We briefly describe the class of $\CD$ and $\RCD$ spaces, we recall the notion of pmGH convergence and the mass decomposition theorem (\cref{thm:MassDecomposition}), and we prove the mentioned sharp Sobolev inequality (\cref{thm:IsoperimetriSharpBrendle}). \cref{sec:Concavity} is devoted to the proof of \cref{thm:Concavity}. In \cref{sec:SectionExistence} we prove the existence results \cref{thm:Principale}, \cref{cor:Princ2}, \cref{thm1}, and further existence results (\cref{sec:Further}).

\subsection*{Acknowledgments} 
The first author is partially supported by the European Research Council (ERC Starting Grant 713998 GeoMeG `\emph{Geometry of Metric Groups}'). The second author is supported by the Giorgio and Elena Petronio Fellowship at the Institute for Advanced Study. He is grateful to Stefano Nardulli for having introduced him to the isoperimetric problem on noncompact manifolds. The authors also thank Gian Paolo Leonardi, Aaron Naber, and Manuel Ritor\'{e} for fruitful discussions and comments.

\section{Definitions and auxiliary results}\label{sec:DefPreliminaryResults}

\subsection{Preliminaries on Riemannian manifolds and metric measure spaces}
In this preliminary section we introduce basic facts and notations about Riemannian manifolds and metric measure spaces. 

Given two Riemannian manifolds $(M^m,g), (N^n,h)$, we denote with $(M^m\times N^n,g+h)$ the product of the two Riemannian structure. 
For the notions of BV and Sobolev spaces on Riemannian manifolds we refer the reader to \cite[Section 1]{MirandaPallaraParonettoPreunkert07}.
For every finite perimeter set $E$ in $\Omega$ we denote with $P(E,\Omega)$ the perimeter of $E$ inside $\Omega$. When $\Omega=M^n$ we simply write $P(E)$. We denote with $\mathcal{H}^{n-1}$ the $(n-1)$-dimensional Hausdorff measure on $M^n$ relative to the distance induced by $g$. We recall that for every finite perimeter set $E$ one has $P(E)=\mathcal{H}^{n-1}(\partial^*E)$ and the characteristic function $\chi_E$ belongs to $BV_{\rm loc}(M^n, \vol)$ with generalized gradient $D\chi_E = \nu \mathcal{H}^{n-1}\llcorner \partial^* E$ for a function $\nu:M\to T M^n$ with $|\nu|=1$ at $|D\chi_E|$-a.e. point, where $\partial^* E$ is the essential boundary of $E$.

We recall the following terminology.

\begin{defn}[Convergence of finite perimeter sets]\label{def:ConvergenceFinitePerimeter}
Let $(M^n,g)$ be a Riemannian manifold. We say that a sequence of measurable (with respect to the volume measure) sets $E_i$ \emph{locally converges} to a measurable set $E$ if the characteristic functions $\chi_{E_i}$ converge to $\chi_E$ in $L^1_{\rm loc}(M^n,g)$. In such a case we simply write that $E_i\to E$ locally on $M^n$.

If the sets $E_i$ have also locally finite perimeter, that is, $P(E_i,\Omega)<+\infty$ for any $i$ and any bounded open set $\Omega$, we say that $E_i\to E$ \emph{in the sense of finite perimeter sets} if $E_i\to E$ locally on $M^n$ and the sequence of measures $D\chi_{E_i}$ locally weakly* converges as measures, that is, with respect to the duality with compactly supported continuous functions. In such a case, $E$ has locally finite perimeter and the weak* limit of $D\chi_{E_i}$ is $D\chi_E$.
\end{defn}

\begin{remark}[Approximation of finite perimeter sets with smooth sets]\label{rem:Approximation}
It can be proved, see \cite[Lemma 2.3]{FloresNardulli20}, that when $M^n$ is a complete Riemannian manifold every finite perimeter set $\Omega$ with $0<\vol(\Omega)<+\infty$ and $\vol(\Omega^c)>0$ is approximated by relatively compact sets $\Omega_i$ in $M^n$ with smooth boundary such that $\vol(\Omega_i)=\vol(\Omega)$ for every $i\in\mathbb N$, $\vol(\Omega_i\Delta\Omega)\to 0$ when $i\to +\infty$, and $P(\Omega_i)\to P(\Omega)$ when $i\to +\infty$. Thus, by approximation, one can deduce that 
$$
I(V)=\inf\{\mathcal{H}^{n-1}(\partial\Omega):\text{$\Omega \Subset M^n$ has smooth boundary, $\vol(\Omega)=V$}\},
$$
see \cite[Theorem 1.1]{FloresNardulli20}.
\end{remark}

We also need to recall the definition of the simply connected radial models with constant sectional curvature.

\begin{defn}[Models of constant sectional curvature, cf. {\cite[Example 1.4.6]{Petersen2016}}]\label{def:Models}
Let us define
\[
\sn_K(r) := \begin{cases}
(-K)^{-\frac12} \sinh((-K)^{\frac12} r) & K<0,\\
r & K=0,\\
K^{-\frac12} \sin(K^{\frac12} r) & K>0.
\end{cases}
\]

If $K>0$, then $((0,\pi/\sqrt{K}]\times \mathbb S^{n-1},\d r^2+\mathrm{sn}_K^2(r)g_1)$, where $g_1$ is the canonical metric on $\mathbb S^{n-1}$, is the radial model of dimension $n$ and constant sectional curvature $K$. The metric can be smoothly extended at $r=0$, and thus we shall write that the the metric is defined on the ball $\mathbb B^n_{\pi/\sqrt{K}} \subset \R^n$. The Riemannian manifold $(\mathbb B^n_{\pi/\sqrt{K}}, g_K\eqdef \d r^2+\mathrm{sn}_K^2(r)g_1)$ is the unique (up to isometry) simply connected Riemannian manifold of dimension $n$ and constant sectional curvature $K>0$.

If instead $K\leq 0$, then $((0,+\infty)\times\mathbb S^{n-1},\d r^2+\mathrm{sn}_K^2(r)g_1)$ is the radial model of dimension $n$ and constant sectional curvature $K$. Extending the metric at $r=0$ analogously yields the unique (up to isometry) simply connected Riemannian manifold of dimension $n$ and constant sectional curvature $K\leq 0$, in this case denoted by $(\R^n,g_K)$.

We denote by $v(n,K,r)$ the volume of the ball of radius $r$ in the (unique) simply connected Riemannian manifold of sectional curvature $K$ of dimension $n$, and by $s(n, K, r)$ the volume of the boundary of such a ball. In particular $s(n,K,r)=n\omega_n\mathrm{sn}_K^{n-1}(r)$ and $v(n,K,r)=\int_0^rn\omega_n\mathrm{sn}_K^{n-1}(r)\de r$, where $\omega_n$ is the Euclidean volume of the Euclidean unit ball in $\mathbb R^n$.

On $\mathbb R^n$ we denote with $\dist_{\mathbb R^n}$, $g_{\mathbb R^n}$, and $\meas_{\mathbb R^n}$, respectively, the Euclidean distance, the Euclidean metric, and the Lebesgue measure.
\end{defn}

Let us now briefly recall the main concepts we will need from the theory of metric measure spaces. We recall that a {\em metric measure space, $\mathrm{m.m.s.}$ for short,} $(X,\dist_X,\mathfrak{m}_X)$ is a triple where $(X,\dist_X)$ is a locally compact separable metric space and $\mathfrak{m}_X$ is a Borel measure bounded on bounded sets. A {\em pointed metric measure space} is a quadruple $(X,\dist_X,\mathfrak{m}_X,x)$ where $(X,\dist_X,\mathfrak{m}_X)$ is a metric measure space and $x\in X$ is a point. 
\begin{center}
    For simplicity, and since it will always be our case, we will always assume that given $(X,\dist_X,\meas_X)$ a m.m.s.\! the support ${\rm spt}\,\meas_X$ of the measure $\meas_X$ is the whole $X$.
\end{center}

Given two m.m.s. $(X,\dist_X,\meas_X)$ and $(Y,\dist_Y,\meas_Y)$, we denote by $(X\times Y,\dist_X\otimes\dist_Y,\meas_X\otimes\meas_Y)$ the product m.m.s., where 
$$
\dist_X\otimes\dist_Y((x,y),(x',y')):=\sqrt{\dist_X(x,x')^2+\dist_Y(y,y')^2}, \qquad \forall x,x'\in X,\quad\forall y,y'\in Y,
$$
and $\meas_X\otimes\meas_Y$ is the usual product of measures. 

We assume the reader to be familiar with the notion of pointed measured Gromov--Hausdorff convergence, referring to \cite[Chapter 27]{VillaniBook} and to \cite[Chapter 7 and 8]{BuragoBuragoIvanovBook} for an overview on the subject. In the following treatment we introduce the pmGH-convergence already in a proper realization even if this is not the general definition. Nevertheless, the (simplified) definition of Gromov--Hausdorff convergence via a realization is equivalent to the standard definition of pmGH convergence in our setting, because in the applications we will always deal with locally uniformly doubling measures, see \cite[Theorem 3.15 and Section 3.5]{GigliMondinoSavare15}. The following definition is taken from the introductory exposition of \cite{AmborsioBrueSemola19}.

\begin{defn}[pGH and pmGH convergence]\label{def:GHconvergence}
A sequence $\{ (X_i, \dist_i, x_i) \}_{i\in \N}$ of pointed metric spaces is said to converge in the \emph{pointed Gromov--Hausdorff topology, in the $\mathrm{pGH}$ sense for short,} to a pointed metric space $ (Y, \dist_Y, y)$ if there exist a complete separable metric space $(Z, \dist_Z)$ and isometric embeddings
\[
\begin{split}
&\Psi_i:(X_i, \dist_i) \to (Z,\dist_Z), \qquad \forall\, i\in \N,\\
&\Psi:(Y, \dist_Y) \to (Z,\dist_Z),
\end{split}
\]
such that for any $\eps,R>0$ there is $i_0(\varepsilon,R)\in\mathbb N$ such that
\[
\Psi_i(B_R^{X_i}(x_i)) \subset \left[ \Psi(B_R^Y(y))\right]_\eps,
\qquad
\Psi(B_R^{Y}(y)) \subset \left[ \Psi_i(B_R^{X_i}(x_i))\right]_\eps,
\]
for any $i\ge i_0$, where $[A]_\eps\eqdef \{ z\in Z \st \dist_Z(z,A)\leq \eps\}$ for any $A \subset Z$.

Let $\meas_i$ and $\mu$ be given in such a way $(X_i,\dist_i,\meas_i,x_i)$ and $(Y,\dist_Y,\mu,y)$ are m.m.s.\! If in addition to the previous requirements we also have $(\Psi_i)_\sharp\mathfrak{m}_i \rightharpoonup \Psi_\sharp \mu$ with respect to duality with continuous bounded functions on $Z$ with bounded support, then the convergence is said to hold in the \emph{pointed measure Gromov--Hausdorff topology, or in the $\mathrm{pmGH}$ sense for short}.
\end{defn}

We remark that in the setting we will deal with, product structures are stable under pmGH convergence, i.e., if $(X_n,\dist_{X_n},\meas_{X_n},x_n)\to (X,\dist_X,\meas_X,x)$ and $(Y_n,\dist_{Y_n},\meas_{Y_n},y_n)\to (Y,\dist_Y,\meas_Y,y)$ in pmGH then 
$$
(X_n\times Y_n,\dist_{X_n}\otimes\dist_{Y_n},\meas_{X_n}\otimes\meas_{Y_n}, (x_n,y_n))\to (X\times Y,\dist_X\otimes\dist_Y,\meas_X\otimes\meas_Y,(x,y)),
$$
in pmGH.

\subsection{$\RCD$ spaces}\label{sub:RCD}
Let us briefly introduce the so-called $\RCD$ condition for m.m.s., and discuss some basic and useful properties of it. Since we will use part of the $\RCD$ theory just as an instrument for our purposes and since we will never use in the paper the specific definition of $\RCD$ space, we just outline the main references on the subject and we refer the interested reader to the survey of Ambrosio \cite{AmbrosioSurvey} and the references therein. 

After the introduction, in the independent works \cite{Sturm1,Sturm2} and \cite{LottVillani}, of the curvature dimension condition $\CD(K,n)$ encoding in a synthetic way the notion of Ricci curvature bounded from below by $K$ and dimension bounded above by $n$, the definition of $\RCD(K,n)$ m.m.s.\! was first proposed in \cite{GigliRCD} and then studied in \cite{Gigli13, ErbarKuwadaSturm15,AmbrosioMondinoSavare15}, see also \cite{CavallettiMilman16} for the equivalence between the $\RCD^*(K,n)$ and the $\RCD(K,n)$ condition. The infinite dimensional counterpart of this notion had been previously investigated in \cite{AmbrosioGigliSavare14}, see also \cite{AmbrosioGigliMondinoRajala15} for the case of $\sigma$-finite reference measures. 

\begin{remark}[pmGH limit of $\RCD$ spaces]\label{remark:stability}
	We recall that, whenever it exists, a pmGH limit of a sequence $\{(X_i,\dist_i,\meas_i,x_i)\}_{i\in\mathbb N}$ of (pointed) $\RCD(K,n)$ spaces is still an $\RCD(K,n)$ metric measure space.
\end{remark}

Due to the compatibility of the $\RCD$ condition with the smooth case of Riemannian manifolds with Ricci curvature bounded from below and to its stability with respect to pointed measured Gromov--Hausdorff convergence, limits of smooth Riemannian manifolds with Ricci curvature uniformly bounded from below by $K$ and dimension uniformly bounded from above by $n$ are $\RCD(K,n)$ spaces. Then the class of $\RCD$ spaces includes the class of Ricci limit spaces, i.e., limits of sequences of Riemannian manifolds with the same dimension and with Ricci curvature uniformly bounded from below \cite{ChCo0,ChCo1,ChCo2,ChCo3}.
An extension of noncollapsed Ricci limit spaces is the class of $\RCD(K,n)$ space where the reference measure is the $n$-dimensional Hausdorff measure relative to the distance, introduced and studied in \cite{Kitabeppu17, DePhilippisGigli18, AntBruSem19}. As a consequence of the rectifiability of $\RCD$ spaces \cite{MondinoNaber, BruePasqualettoSemolaRectifiabilitySpace2020}, we remark that if $(X,\dist,\mathcal{H}^n)$ is an $\RCD(K,n)$ space then $n$ is an integer.

We state the volume convergence theorems obtained by Gigli and De Philippis in \cite[Theorem 1.2 and Theorem 1.3]{DePhilippisGigli18}, which are the synthetic version of the celebrated volume convergence of Colding \cite{Colding97}.

\begin{thm}\label{thm:volumeconvergence}
	Let $\{(X_i,\dist_i,\mathcal{H}^n,x_i)\}_{i\in\mathbb N}$ be a sequence of pointed $\RCD(K,n)$ m.m.s.\! with $K\in\mathbb{R}$ and $n\in [1,+\infty)$. Assume that $(X_i,\dist_i,x_i)$ converges in the pGH topology to $(X,\dist,x)$. Then precisely one of the following happens
	\begin{itemize}
		\item[(a)] $\limsup_{i\to\infty}\mathcal{H}^n\left(B_1(x_i)\right)>0$. Then the $\limsup$ is a limit and $(X_i,\dist_i,\mathcal{H}^n,x_i)$ converges in the pmGH topology to $(X,\dist,\mathcal{H}^n,x)$. Hence $(X,\dist,\mathcal{H}^n)$ is an $\RCD(K,n)$ m.m.s. endowed with the $n$-dimensional Hausdorff measure;
		\item[(b)] $\lim_{i\to\infty}\mathcal{H}^n(B_1(x_i))=0$. In this case we have $\dim_{H}(X,\dist)\le n-1$, where we denoted by $\dim_H(X,\dist)$ the Hausdorff dimension of $(X,\dist)$. 
	\end{itemize}
	Moreover, for $K\in\mathbb R$ and $n\in[1,+\infty)$, let $\mathbb B_{K,n,R}$ be the collection of all equivalence classes up to isometry of closed balls of radius $R$ in $\RCD(K,n)$ spaces, equipped with the Gromov-Hausdorff distance. Then the map $\mathbb B_{K,n,R}\ni Z\to \mathcal{H}^n(Z)$ is real-valued and continuous.
\end{thm}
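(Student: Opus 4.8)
The plan is to run a compactness argument for the volumes of unit balls and then to identify the resulting limit measure. Write $m_i\eqdef\mathcal{H}^n(B_1(x_i))$, which is bounded above by $v(n,K,1)$ by Bishop--Gromov (\cref{thm:BishopGromov}). Suppose first that we are in case (a) and pass to a subsequence, not relabeled, along which $m_i\to m_\infty>0$. By Bishop--Gromov monotonicity the measures $\mathcal{H}^n$ are then uniformly doubling on balls centred at $x_i$ of bounded radius, with a constant depending only on $n$, $K$, the radius and a positive lower bound for $m_i$; the inclusions $B_r(z)\subset B_{r+\dist(z,x_i)}(x_i)\subset B_{r+2\dist(z,x_i)}(z)$ upgrade this to a uniform local doubling bound around every point in a bounded region of $X_i$. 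Gromov's compactness theorem (\cref{rem:GromovPrecompactness}) then extracts a further subsequence along which $(X_i,\dist_i,\mathcal{H}^n,x_i)$ converges in the pmGH sense; the metric limit is forced to be $(X,\dist,x)$, so we obtain a weak-$*$ limit $\mathcal{H}^n\rightharpoonup\mu$ with $\supp\mu=X$, and $(X,\dist,\mu,x)$ is an $\RCD(K,n)$ space by stability (\cref{remark:stability}).

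The heart of the matter is the identification $\mu=\mathcal{H}^n$, which is Colding's no-volume-drop phenomenon \cite{Colding97} in synthetic form, and I would argue it through densities. Passing Bishop--Gromov to the limit gives $\mu(B_r(x))\le v(n,K,r)$ for every $x\in X$ and $r>0$, so the density $\theta(x)\eqdef\lim_{r\to 0}\mu(B_r(x))/(\omega_n r^n)$ exists and is $\le 1$ everywhere. For the reverse estimate I would invoke the rectifiability theory for $\RCD(K,n)$ spaces \cite{MondinoNaber} applied to $(X,\dist,\mu)$: $\mu$-a.e.\ point has a Euclidean tangent $\R^k$ with integer $k\le n$, and $\mu$ is a density times $\mathcal{H}^k$ on each regular stratum; the non-collapsing of the approximants --- which carry the top-dimensional Hausdorff measure --- together with Bishop--Gromov forces $k=n$ and $\theta\equiv 1$ at $\mu$-a.e.\ point, whence $\mu=\mathcal{H}^n$ by Lebesgue differentiation. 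This is conclusion (a); that the $\limsup$ there is actually a limit then follows by a routine subsequence argument, every subsequence having a sub-subsequence that reproduces the same limit $(X,\dist,\mathcal{H}^n,x)$, so that the whole sequence pmGH-converges and $\mathcal{H}^n(B_1(x_i))\to\mathcal{H}^n(B_1(x))$, using $\mathcal{H}^n(\partial B_1(x))=0$ in an $\RCD(K,n)$ space. The genuinely hard step is the inequality $\mu\le\mathcal{H}^n$: ruling out that the limit volume concentrates on a set of dimension $<n$ is precisely the depth of Colding's theorem and needs the full tangent-cone and rectifiability structure of $\RCD$ spaces.

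In case (b), $m_i\to 0$; the ball-inclusion estimate above then gives $\mathcal{H}^n(B_r(z_i))\to 0$ for every $r$ and every $z_i$ confined to a bounded region, i.e.\ the rescaled measures $m_i^{-1}\mathcal{H}^n$ admit no weak limit proportional to $\mathcal{H}^n$. I would deduce $\dim_H X\le n-1$ from the dimension-reduction mechanism of Cheeger--Colding \cite{ChCo1} in the $\RCD$ form (see also \cite{DePhilippisGigli18}): collapse of the volume of unit balls produces, at the relevant scales, directions in which the space is lower-dimensional, and iterating this together with the almost-splitting theorem bounds the Hausdorff dimension of any limit by $n-1$. Alternatives (a) and (b) are mutually exclusive for trivial reasons, so exactly one occurs. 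Finally, the continuity of $Z\mapsto\mathcal{H}^n(Z)$ on $\mathbb B_{K,n,R}$ follows by applying the dichotomy to a sequence of closed $R$-balls $Z_j\to Z$ taken inside $\RCD(K,n)$ spaces: realizing the $Z_j$ as balls in ambient spaces and passing to a pGH limit, $Z$ is a closed $R$-ball in an $\RCD(K,n)$ space; in case (a) one gets $\mathcal{H}^n(Z_j)\to\mathcal{H}^n(Z)$ directly, while in case (b) $\dim_H Z\le n-1$ so $\mathcal{H}^n(Z)=0=\lim_j\mathcal{H}^n(Z_j)$, and real-valuedness is the Bishop bound $\mathcal{H}^n(Z)\le v(n,K,R)<+\infty$.
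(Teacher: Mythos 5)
You should first note that the paper does not prove this statement at all: \cref{thm:volumeconvergence} is quoted verbatim from De Philippis--Gigli \cite[Theorems 1.2 and 1.3]{DePhilippisGigli18}, the synthetic counterpart of Colding's volume convergence \cite{Colding97}, so the benchmark is that reference rather than an argument in this paper. The soft layer of your outline is consistent with the standard scheme: uniform local doubling from Bishop--Gromov (\cref{thm:BishopGromov}, \cref{rem:PerimeterMMS2}), precompactness (\cref{rem:GromovPrecompactness}), stability of the $\RCD(K,n)$ condition (\cref{remark:stability}), the subsequence trick to upgrade subsequential pmGH convergence to convergence of the full sequence, the Bishop bound $\mathcal{H}^n(\overline B_R)\le v(n,K,R)$ for real-valuedness, and the deduction of the continuity of $Z\mapsto\mathcal{H}^n(Z)$ from the dichotomy.

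The genuine gap is the identification $\mu=\mathcal{H}^n$ in case (a), which you dispatch with ``Mondino--Naber rectifiability of $(X,\dist,\mu)$ plus non-collapsing of the approximants forces $k=n$ and $\theta\equiv 1$.'' No intrinsic structure theory of the limit can force this: $(\R^n,\dist_{\R^n},\tfrac12\mathcal{L}^n)$ is itself an $\RCD(0,n)$ space with Euclidean tangents everywhere, so nothing in the rectifiability of $(X,\dist,\mu)$ excludes, say, $\theta\equiv\tfrac12$; and the hypothesis $\limsup_i\mathcal{H}^n(B_1(x_i))>0$ only gives $\mu(B_1(x))>0$, not a density. What is missing is precisely the quantitative almost-rigidity transferring information between approximants and limit (a ball which, after rescaling at a $\mu$-regular point, is GH-close to a Euclidean ball must have almost Euclidean $\mathcal{H}^n$-measure in the approximating spaces), i.e.\ the ``no volume drop'' inequality $\mu\ge\mathcal{H}^n$. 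Note also that you mislabel the hard direction: you call $\mu\le\mathcal{H}^n$ (``ruling out concentration on a set of dimension $<n$'') the deep step, but that direction essentially follows from the Bishop--Gromov density bound $\theta\le1$ you already derived, by standard density comparison; the deep step is the opposite inequality, which is the actual content of \cite{Colding97} and \cite{DePhilippisGigli18} and is nowhere argued in your sketch. The same criticism applies to case (b), where ``the dimension-reduction mechanism of Cheeger--Colding \ldots bounds the Hausdorff dimension by $n-1$'' is a pointer to the literature rather than an argument, and to the boundary issue $\mathcal{H}^n(\partial B_R)=0$ silently used for convergence of closed-ball volumes. As written, the proposal reduces the theorem to exactly the statements it was meant to establish.
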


\begin{remark}[Gromov precompatness theorem for $\RCD$ spaces]\label{rem:GromovPrecompactness}
Here we recall the synthetic variant of Gromov's precompactness theorem for $\RCD$ spaces, see \cite[Equation (2.1)]{DePhilippisGigli18}. Let $\{(X_i,\dist_i,\meas_i,x_i)\}_{i\in\mathbb N}$ be a sequence of $\RCD(K_i,n)$ spaces with $n\in[1,+\infty)$, $\supp(\meas_i)=X_i$ for every $i\in\mathbb N$, $\meas_i(B_1(x_i))\in[v,v^{-1}]$ for some $v\in(0,1)$ and for every $i\in\mathbb N$, and $K_i\to K\in\mathbb R$. Then there exists a subsequence pmGH-converging to some $\RCD(K,n)$ space $(X,\dist,\meas,x)$ with $\supp(\meas)=X$.
\end{remark}

We conclude this part by recalling a few basic definitions and results concerning the perimeter functional in the setting of metric measure spaces (see \cite{Ambrosio02,Miranda03,AmbrosioDiMarino14}).

\begin{defn}[$BV$ functions and perimeter on m.m.s.]\label{def:BVperimetro}
Let $(X,\dist,\meas)$ be a metric measure space. A function $f \in L^1(X,\meas)$ is said to belong to the space of \emph{bounded variation functions} $BV(X,\dist,\meas)$ if there is a sequence $f_i \in {\rm Lip}_{\mathrm{loc}}(X)$ such that $f_i \to f$ in $L^1(X,\meas)$ and $\limsup_i \int_X \lip f_i \de \meas < +\infty$, where $\lip u (x) \eqdef \limsup_{y\to x} \frac{|u(y)-u(x)|}{\dist(x,y)}$ is the \emph{slope} of $u$ at $x$, for any accumulation point $x\in X$, and $\lip u(x):=0$ if $x\in X$ is isolated. In such a case we define
\[
|Df|(A) \eqdef \inf\left\{\liminf_i \int_A \lip f_i \de\meas \st \text{$f_i \in {\rm Lip}_{\rm loc}(A), f_i \to f $ in $L^1(A,\meas)$} \right\},
\]
for any open set $A\subset X$.

If $E\subset X$ is a Borel set and $A\subset X$ is open, we  define the \emph{perimeter $P(E,A)$  of $E$ in $A$} by
\[
P(E,A) \eqdef \inf\left\{\liminf_i \int_A \lip u_i \de\meas \st \text{$u_i \in {\rm Lip}_{\rm loc}(A), u_i \to \chi_E $ in $L^1_{\rm loc}(A,\meas)$} \right\},
\]
We say that $E$ has \emph{finite perimeter} if $P(E,X)<+\infty$, and we denote by $P(E)\eqdef P(E,X)$. Let us remark that the set functions $|Df|, P(E,\cdot)$ above are restrictions to open sets of Borel measures that we denote by $|Df|, |D\chi_E|$ respectively, see \cite{AmbrosioDiMarino14}, and \cite{Miranda03}. 

The \emph{isoperimetric profile of $(X,\dist,\meas)$} is
\[
I_{X}(V)\eqdef \inf \left\{ P(E) \st \text{$E\subset X$ Borel, $\meas(E)=V$} \right\},
\]
for any $V\in[0,\meas(X))$. If $E\subset X$ is Borel with $\meas(E)=V$ and $P(E)=I_X(V)$, then we say that $E$ is an \emph{isoperimetric region}.
\end{defn}

It follows from classical approximation results (cf. \cref{rem:Approximation}) that the above definition yields the usual notion of perimeter on any Riemannian manifold $(M^n,g)$ recalled at the beginning of this section.

We will need the following well established approximation result.

\begin{lemma}\label{lemma:approximationSobolevfunctio}
Let $(X,\dist)$ be a complete and separable metric space, and let $\meas$ be a nonnegative measure, finite on bounded sets. Then, for any $f\in BV(X,\dist,\meas)$ there exists a sequence $(f_k)\subset  {\rm Lip}(X,\dist)$, where $f_k$ has bounded support for any $k$, such that $f_k \to f$ pointwise $\meas$-a.e. and in $L^1(X,\meas)$, and $\int_X \lip(f_k) d \meas \to |Df|(X)$ as $k\to \infty$. 
\end{lemma}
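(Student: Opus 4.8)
The plan is to prove the equivalent statement that for every $\eps>0$ and every $f\in BV(X,\dist,\meas)$ there is $u\in\mathrm{Lip}(X,\dist)$ with bounded support such that $\norm{u-f}_{L^1(X,\meas)}<\eps$ and $\int_X\lip u\de\meas<|Df|(X)+\eps$. Granting this, applying it with $\eps=1/k$ produces $f_k$ with $f_k\to f$ in $L^1$ and $\limsup_k\int_X\lip f_k\de\meas\le|Df|(X)$; since $v\mapsto|Dv|(X)$ is lower semicontinuous along $L^1$-convergent sequences (it is, by construction, the sequential relaxation of $v\mapsto\int_X\lip v\de\meas$) and $|Df_k|(X)\le\int_X\lip f_k\de\meas$, we get $\liminf_k\int_X\lip f_k\de\meas\ge|Df|(X)$, hence $\int_X\lip f_k\de\meas\to|Df|(X)$; finally one extracts a subsequence converging also $\meas$-a.e., which still enjoys the other two convergences.

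To produce $u$, first I would realize $|Df|(X)$ by a single good competitor: by the very definition of $|Df|(X)$ and a diagonal argument there is $h\in\mathrm{Lip}_{\mathrm{loc}}(X)$ with $h\in L^1(X,\meas)$, $\norm{h-f}_{L^1}<\eps/4$ and $\int_X\lip h\de\meas<|Df|(X)+\eps/4$. Then I would perform three reductions. \emph{(i) Truncation in value}: set $T_N(t)\eqdef\max\{-N,\min\{t,N\}\}$; since $T_N$ is $1$-Lipschitz one has $\lip(T_N\circ h)\le\lip h$ pointwise and $T_N\circ h\to h$ in $L^1$ by dominated convergence, so for $N$ large $h_1\eqdef T_N\circ h$ satisfies $\norm{h_1-h}_{L^1}<\eps/4$, $\int_X\lip h_1\de\meas<|Df|(X)+\eps/4$ and $\abs{h_1}\le N$. \emph{(ii) Truncation in space}: fix $o\in X$ and let $\eta_R\in\mathrm{Lip}(X)$ be the cutoff equal to $1$ on $B_R(o)$, equal to $0$ outside $B_{R+1}(o)$ and equal to $R+1-\dist(\cdot,o)$ in between, so that $\eta_R$ is $1$-Lipschitz and $\lip\eta_R\le\id_{B_{R+1}(o)\setminus B_R(o)}$; from the product rule for slopes $\lip(h_1\eta_R)\le\eta_R\lip h_1+\abs{h_1}\lip\eta_R\le \lip h_1+\abs{h_1}\id_{B_{R+1}(o)\setminus B_R(o)}$ we get $\int_X\lip(h_1\eta_R)\de\meas\le\int_X\lip h_1\de\meas+\int_{X\setminus B_R(o)}\abs{h_1}\de\meas$, and since $h_1\in L^1$ the last term vanishes as $R\to\infty$; choosing $R$ large, $h_2\eqdef h_1\eta_R$ has bounded support, $\abs{h_2}\le N$, $\norm{h_2-h_1}_{L^1}<\eps/4$ and $\int_X\lip h_2\de\meas<|Df|(X)+\eps/2$.

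The remaining reduction, \emph{(iii) from locally Lipschitz to globally Lipschitz with bounded support}, is the main obstacle, since in a general complete separable metric space bounded sets need not be compact and a locally Lipschitz function with bounded support need not be globally Lipschitz. I would use the McShane inf-convolution $h_2^{(L)}(x)\eqdef\inf_{y\in X}\big(h_2(y)+L\dist(x,y)\big)$: it is $L$-Lipschitz, satisfies $-N\le h_2^{(L)}\le h_2\le N$, vanishes whenever $\dist(x,\supp h_2)>2N/L$ (hence has bounded support, uniformly for $L$ large), and converges to $h_2$ pointwise as $L\to\infty$ since $h_2$ is continuous and bounded below, so $h_2^{(L)}\to h_2$ in $L^1$ by dominated convergence. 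The delicate point is the energy inequality $\limsup_{L\to\infty}\int_X\lip h_2^{(L)}\de\meas\le\int_X\lip h_2\de\meas$: at any $x_0$ admitting a radius $r>0$ on which $h_2$ is $\lambda$-Lipschitz with $\lambda<L$ and $Lr$ large (automatic for $L$ large, as $\norm{h_2}_\infty$ is fixed) the infimum defining $h_2^{(L)}$ is attained near $x_0$ and, $L$ exceeding the local Lipschitz constant there, $h_2^{(L)}\equiv h_2$ on a neighbourhood of $x_0$, whence $\lip h_2^{(L)}(x_0)=\lip h_2(x_0)$; the exceptional set $S_L$ of points lacking such a radius is contained in $\supp h_2$ (hence of finite measure) and decreases to the empty set as $L\to\infty$, while on it $\lip h_2^{(L)}\le L$, so that $\int_X\lip h_2^{(L)}\de\meas\le\int_X\lip h_2\de\meas+L\,\meas(S_L)$, which is then closed off by a suitable choice of $L\to\infty$ (alternatively, this whole reduction may be quoted from the density results for $BV$ on metric measure spaces in the literature). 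Taking $u\eqdef h_2^{(L)}$ with $L$ so large that $\norm{u-h_2}_{L^1}<\eps/4$ and $\int_X\lip u\de\meas<|Df|(X)+\eps$ concludes the construction.
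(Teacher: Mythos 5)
Your reductions (i)--(ii) and the surrounding bookkeeping (passing from an $\eps$-statement to a sequence, recovering the lower bound from $L^1$-lower semicontinuity of the relaxed total variation, extracting an a.e.\ convergent subsequence) are fine, and the pointwise claims you make about the inf-convolution are correct: $h_2^{(L)}$ is $L$-Lipschitz, has bounded support, increases to $h_2$ pointwise and in $L^1$, and coincides with $h_2$ near any point around which $h_2$ is $\lambda$-Lipschitz on a ball of radius $r$ with $\lambda<L$ and $Lr$ large. But the argument does not close at the decisive step (iii). From your own estimate you only get
\begin{equation*}
\int_X \lip h_2^{(L)}\de\meas \;\le\; \int_X \lip h_2\de\meas \;+\; L\,\meas(S_L),
\end{equation*}
and all you know about $S_L$ is that it is a decreasing family of sets of finite measure with empty intersection (also note $S_L$ is contained in a bounded neighbourhood of $\supp h_2$, not in $\supp h_2$ itself, though that is harmless). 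This gives $\meas(S_L)\to 0$ with no rate, and that is not enough: nothing prevents $L\,\meas(S_L)\to\infty$ along \emph{every} sequence $L\to\infty$ (e.g.\ $\meas(S_L)\sim 1/\log L$), so ``a suitable choice of $L$'' need not exist. Moreover there is no elementary way to quantify $\meas(S_L)$: membership in $S_L$ is governed by the Lipschitz constant of $h_2$ on balls of radius $\sim 1/L$, i.e.\ essentially by the asymptotic Lipschitz constant ${\rm lip}_a h_2$, which in a general metric measure space is not controlled by the pointwise slope $\lip h_2$ appearing in your energy bound (and a refined slope estimate for inf-convolutions would at best give $\limsup_L\int\lip h_2^{(L)}\le \int {\rm lip}_a h_2$, which is the wrong quantity). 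In other words, the passage from locally Lipschitz competitors measured with $\lip$ to globally Lipschitz, boundedly supported competitors is precisely the nontrivial equivalence of relaxations proved in \cite{AmbrosioDiMarino14} and in Di Marino's thesis, and your parenthetical fallback ``quote the density results from the literature'' is not an alternative route but the whole content of the step.

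For comparison, the paper's proof does exactly that quotation and nothing else: it identifies $|Df|(X)$ with the relaxation $|Df|_w(X)$ computed with the asymptotic Lipschitz constant over Lipschitz functions with bounded support (via \cite[Theorem 1.1]{AmbrosioDiMarino14} and \cite[Theorem 4.5.3]{DiMarinoThesis}), and then invokes \cite[Proposition 4.5.6]{DiMarinoThesis}, which directly produces the sequence $(f_k)$ with bounded supports, $L^1$ and a.e.\ convergence, and convergence of the energies. So either you supply a genuine proof of step (iii) --- which would amount to reproving those equivalence theorems --- or you cite them, in which case your truncation steps become unnecessary and the argument reduces to the paper's.
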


\begin{proof}
For $f \in BV(X,\dist,\meas)$ it holds that
\[
\begin{split}
    |Df|(X) & = |Df|_w(X) \\
    &= \inf \left\{ \liminf_i \int_X {\rm lip}_a\, f_i \de \meas \st f_i \in {\rm Lip}(X) \text{ with bdd spt, } f_i\to f \text{ in } L^1(X,\meas) \right\} \\
    &= \inf \left\{ \liminf_i \int_X {\rm lip}_a\, f_i \de \meas \st f_i \in {\rm Lip}_{\rm loc}(X), f_i\to f \text{ in } L^1(X,\meas) \right\}
    \ge |Df|(X),
\end{split}
\]
where $|Df|_w$ is as in \cite[Section 5.3]{AmbrosioDiMarino14}, that is as in \cite[Section 4.4.3]{DiMarinoThesis}, and ${\rm lip}_a\, g$ is the asymptotic Lipschitz constant of a locally Lispchitz function $g$, which satisfies ${\rm lip}_a\, g \ge \lip g$. In the previous chain, the first equality follows from \cite[Theorem 1.1]{AmbrosioDiMarino14}, the second and third ones from \cite[Theorem 4.5.3]{DiMarinoThesis}, and the final inequality from the estimate ${\rm lip}_a \ge \lip$ for locally Lipschitz functions.

Hence \cite[Proposition 4.5.6]{DiMarinoThesis} applies, that yields the claim.
\end{proof}

The following general coarea formula will be employed in obtaining the sharp Sobolev inequality on $\CD(0, n)$ below.
\begin{remark}[Coarea formula on metric measure spaces]\label{rem:PerimeterMMS}
Let $(X,\dist,\meas)$ be a metric measure space. Let us observe that from the definitions given above, a Borel set $E$ with finite measure has finite perimeter if and only if the characteristic function $\chi_E$ belongs to $BV(X,\dist,\meas)$.

If $f \in BV(X,\dist,\meas)$, then $\{f>\alpha\}$ has finite perimeter for a.e. $\alpha \in \R$ and the \emph{coarea formula} holds
\begin{equation*}
    \int_X u \de |Df| = \int_{-\infty}^{+\infty} \left( \int_X u \de \abs{D\chi_{\{f>\alpha\}}} \right) \de \alpha,
\end{equation*}
for any Borel function $u:X\to [0,+\infty]$, see \cite[Proposition 4.2]{Miranda03}. If $f$ is also continuous and nonnegative, then $|Df|(\{f=\alpha\})=0$ for \emph{every} $\alpha \in [0,+\infty)$ and the \emph{localized coarea formula} holds
\begin{equation*}
    \int_{\{a<f<b\}} u \de |Df| = \int_a^b \left( \int_X u \de \abs{D\chi_{\{f>\alpha\}}} \right) \de \alpha,
\end{equation*}
for every Borel function $u:X\to [0,+\infty]$ and every $0\le a < b < +\infty$, see \cite[Corollary 1.9]{AmborsioBrueSemola19}.
\end{remark}

We recall a statement for the classical Bishop--Gromov volume and perimeter comparison.
The conclusions \eqref{1}, \eqref{2}, and the rigidity part of \cref{thm:BishopGromov} are consequences, e.g., of \cite[Theorem 3.101]{lafontaine}, \cite[Theorem 1.2 and Theorem 1.3]{SchoenYauLectures}, and the arguments within their proofs. The conclusion \eqref{3} follows from \cite[Corollary 2.22, item (i)]{PigolaRigoliSetti} and the coarea formula.

\begin{thm}[Bishop--Gromov comparison]
\label{thm:BishopGromov}
Let $(M^n,g)$ be a  complete Riemannian manifold such that $\ric\geq (n-1)K$ on $M^n$ in the sense of quadratic forms for some $K \in \R$. Let us set $T_K:=+\infty$ if $K\leq 0$, and $T_K:=\pi/\sqrt{K}$ if $K>0$. Then, for every $p\in M$ and for $r \leq T_K$ the following hold
    \begin{align}
    \label{1}
    &\frac{\vol(B_r(p))}{v(n,K,r)}\to 1\,\text{as $r\to 0$ and it is nonincreasing}, \\
    \label{2}
    &\frac{P(B_r(p))}{s(n, K, r)}\to 1 \,\text{as $r\to 0$ and it is almost everywhere nonincreasing}, \\
    \label{3}
    &\frac{P(B_r(p))}{s(n, K, r)} \leq \frac{\vol(B_r(p))}{v(n,K,r)}.
    \end{align}
    Moreover, if one has $\vol(B_{\overline r}(p))=v(n,K,\overline r)$ for some $\overline r\leq T_K$, then $B_{\overline r}(p)$ is isometric to the ball of radius $\overline r$ in the simply connected model of constant sectional curvature $K$ and dimension $n$.
\end{thm}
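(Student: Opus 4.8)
The plan is the classical one: work in geodesic polar coordinates centred at $p$, derive a pointwise Jacobian comparison from the Ricci lower bound via the Riccati equation for the distance function, and then integrate. \emph{Step 1 (Riccati comparison).} Let $u:=\dist(\cdot,p)$, which is smooth on $M^n\setminus(\{p\}\cup\mathrm{Cut}(p))$ with $|\nabla u|=1$. For $\theta\in\mathbb S^{n-1}\subset T_pM$ and $0<r<c(\theta)$, the cut distance in direction $\theta$, write the volume element in polar coordinates as $\mathcal A(r,\theta)\,\de r\,\de\theta$, so that $\partial_r\log\mathcal A(r,\theta)=\Delta u$ along the radial geodesic. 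Tracing the Riccati identity $\nabla_{\partial_r}\nabla^2u=-(\nabla^2u)^2-R(\cdot,\partial_r)\partial_r$ and using Cauchy--Schwarz $|\nabla^2u|^2\ge (\Delta u)^2/(n-1)$ together with $\ric(\partial_r,\partial_r)\ge(n-1)K$ gives $\partial_r(\Delta u)\le -\frac{(\Delta u)^2}{n-1}-(n-1)K$. Since the model Laplacian $(n-1)\sn_K'/\sn_K$ realizes equality here, ODE comparison yields $\Delta u\le (n-1)\sn_K'(u)/\sn_K(u)$, hence $r\mapsto \mathcal A(r,\theta)/\sn_K^{n-1}(r)$ is nonincreasing on $(0,c(\theta))$; by the expansion $\mathcal A(r,\theta)=r^{n-1}+O(r^{n+1})$ this ratio tends to $1$ as $r\to0^+$. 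Setting $\mathcal A(r,\theta):=0$ for $r\ge c(\theta)$, the monotonicity extends to all of $(0,T_K)$.

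\emph{Step 2 (integration; proof of \eqref{1}, \eqref{2}, \eqref{3}).} By the area formula $\vol(B_r(p))=\int_{\mathbb S^{n-1}}\int_0^{\min\{r,c(\theta)\}}\mathcal A(s,\theta)\,\de s\,\de\theta$, and for every $r$ below the injectivity radius — and for a.e. $r>0$ in general, the cut locus being $\mathcal H^{n-1}$-negligible at a.e. radius — one has $P(B_r(p))=\mathcal H^{n-1}(\partial B_r(p))=\int_{\mathbb S^{n-1}}\mathcal A(r,\theta)\,\de\theta$. Dividing by $s(n,K,r)=n\omega_n\sn_K^{n-1}(r)$ and using that $\sn_K^{n-1}(r)$ does not depend on $\theta$, Step 1 gives at once that $P(B_r(p))/s(n,K,r)$ is a.e. nonincreasing with limit $1$ at $r\to0$, i.e. \eqref{2}. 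Since $\vol(B_r(p))=\int_0^rP(B_t(p))\,\de t$ and $v(n,K,r)=\int_0^rs(n,K,t)\,\de t$, the ratio $\vol(B_r(p))/v(n,K,r)$ is the $s(n,K,\cdot)$-weighted average on $[0,r]$ of the nonincreasing function $t\mapsto P(B_t(p))/s(n,K,t)$; such a weighted average is itself nonincreasing in $r$ — the monotonicity in \eqref{1}, and its limit at $r\to0$ is again $1$ — and it dominates its endpoint value $P(B_r(p))/s(n,K,r)$, which is \eqref{3}.

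\emph{Step 3 (rigidity) and main obstacle.} If $\vol(B_{\bar r}(p))=v(n,K,\bar r)$ for some $\bar r\le T_K$, then the weighted average equals its endpoint value, which forces $t\mapsto P(B_t(p))/s(n,K,t)\equiv1$ on $[0,\bar r]$, hence $\mathcal A(r,\theta)=\sn_K^{n-1}(r)$ for all $r\le\bar r$ and $\theta\in\mathbb S^{n-1}$; in particular $c(\theta)\ge\bar r$ for every $\theta$. Equality in the Jacobian comparison forces equality throughout Step 1, so $\nabla^2u=\frac{\sn_K'(r)}{\sn_K(r)}(g-\de r\otimes\de r)$ on $B_{\bar r}(p)$, which identifies the metric with the warped product $\de r^2+\sn_K^2(r)g_1$, i.e. with the ball of radius $\bar r$ in the simply connected model of constant sectional curvature $K$. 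The main obstacle is the careful handling of the cut locus: checking that the pointwise ratio monotonicity survives after extending $\mathcal A$ by zero across $\mathrm{Cut}(p)$, and that $P(B_r(p))=\int_{\mathbb S^{n-1}}\mathcal A(r,\theta)\,\de\theta$ holds only for a.e. $r$ (this is precisely why \eqref{2} is stated ``almost everywhere'' nonincreasing); the Riccati comparison and the averaging lemma are otherwise routine. Detailed accounts of all these steps are given in \cite[Theorem 3.101]{lafontaine}, \cite[Theorems 1.2 and 1.3]{SchoenYauLectures}, and \cite[Corollary 2.22]{PigolaRigoliSetti} together with the coarea formula.
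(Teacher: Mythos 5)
Your argument is correct and takes essentially the same route the paper relies on: the paper does not prove \cref{thm:BishopGromov} itself but defers to \cite[Theorem 3.101]{lafontaine}, \cite[Theorem 1.2 and Theorem 1.3]{SchoenYauLectures} and \cite[Corollary 2.22]{PigolaRigoliSetti} (the very references you cite), whose content is precisely your polar-coordinate Jacobian/Riccati comparison followed by integration and the weighted-average argument. The only point worth polishing is that \eqref{3} is asserted for \emph{every} $r\le T_K$, while your comparison of the endpoint value with the weighted average is a priori available only at the a.e.\ radii where $P(B_r(p))=\int_{\mathbb S^{n-1}}\mathcal A(r,\theta)\de\theta$; it upgrades to all $r$ by taking good radii $t\uparrow r$ and using lower semicontinuity of the perimeter under $L^1$ convergence together with continuity of $r\mapsto \vol(B_r(p))$ and of $s(n,K,\cdot)$, $v(n,K,\cdot)$.
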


\begin{remark}[Bishop--Gromov comparison theorem on m.m.s.]\label{rem:PerimeterMMS2}
Let us recall that for an arbitrary $\CD((n-1)K,n)$ space $(X,\dist,\meas)$ the classical Bishop--Gromov volume comparison (cf. \cref{thm:BishopGromov}) still holds. More precisely, for a fixed $x\in X$, the function $\meas(B_r(x))/v(n,K,r)$ is nonincreasing in $r$ and the function $P(B_r(x))/s(n,K,r)$ is essentially nonincreasing in $r$, i.e., $P(B_R(x))/s(n,K,R) \le P(B_r(x))/s(n,K,r)$ for almost every radii $R\ge r$, see \cite[Theorem 18.8, Equation (18.8), Proof of Theorem 30.11]{VillaniBook}. Moreover, it holds that $P(B_r(p))/s(n, K, r)\leq \vol(B_r(p))/v(n,K,r)$ for any $r>0$, indeed the last inequality follows from the monotonicity of the volume and perimeter ratios together with the coarea formula on balls.

Moreover, if $(X,\dist,\mathcal{H}^n)$ is an $\RCD((n-1)K,n)$ space, one can conclude that $\mathcal{H}^n$-almost every point has a unique measure Gromov--Hausdorff tangent isometric to $\mathbb R^n$ (\cite[Theorem 1.12]{DePhilippisGigli18}), and thus, from the volume convergence in \cref{thm:volumeconvergence}, we get
 \begin{equation}\label{eqn:VolumeConv}
 \lim_{r\to 0}\frac{\mathcal{H}^n(B_r(x))}{v(n,K,r)}=\lim_{r\to 0}\frac{\mathcal{H}^n(B_r(x))}{\omega_nr^n}=1, \qquad \text{for $\mathcal{H}^n$-almost every $x$},
 \end{equation}
 where $\omega_n$ is the volume of the unit ball in $\mathbb R^n$. Moreover, since the density function $x\mapsto \lim_{r\to 0}\mathcal{H}^n(B_r(x))/\omega_nr^n$ is lower semicontinuous (\cite[Lemma 2.2]{DePhilippisGigli18}), the latter \eqref{eqn:VolumeConv} implies that the density is bounded above by the constant $1$. Hence, from the monotonicity at the beginning of the remark we deduce that, if $(X,\dist,\mathcal{H}^n)$ is an $\RCD((n-1)K,n)$ space, then for every $x\in X$ we have $\mathcal{H}^n(B_r(x))\leq v(n,K,r)$ for every $r>0$.
\end{remark}

Let us also recall a classical definition for the convenience of the reader.

\begin{defn}[$\mathrm{AVR}$ and Euclidean volume growth]
Let $(X,\dist,\meas)$ be an arbitrary $\CD(0,n)$ space with $n \in [1,+\infty)$. From \cref{rem:PerimeterMMS2}, the Bishop--Gromov monotonicity holds, and thus we can define, for an arbitrary $x\in X$, 
$$
\mathrm{AVR}(X,\dist,\meas):=\lim_{r\to +\infty}\frac{\meas(B_r(x))}{\omega_nr^n},
$$
the {\em asymptotic volume ratio} of $(X,\dist,\meas)$, where $\omega_n:=\pi^{n/2}/\Gamma(n/2+1)$. The previous limit is independent on $x\in M$. Notice that $\meas(B_r(x))\geq \mathrm{AVR}(X,\dist,\meas) \omega_nr^n$ for every $r>0$, and every $x\in X$. If $\mathrm{AVR}(X,\dist,\meas)>0$ we say that $(X,\dist,\meas)$ has {\em Euclidean volume growth}.
\end{defn}

Let us recall the basic definition of asymptotic cone in the setting of $\RCD$ spaces with Euclidean volume growth.

\begin{defn}[Asymptotic cones]\label{def:AsymptoticCone}
Let $(X,\dist,\meas,x)$ be a pointed $\RCD(0,n)$ space with $\mathrm{AVR}(X,\dist,\meas)>0$. For every sequence $\{r_i\}_{i\in\mathbb N}$ with $r_i\to+\infty$ the sequence of pointed metric measure spaces $\{(X,r_i^{-1}\dist,r_i^{-n}\meas,x)\}_{i\in\mathbb N}$ is precompact in the pmGH topology due to \cref{rem:GromovPrecompactness}. Every pmGH limit of such a sequence is a metric cone, by a slight modification of the proof of \cite[Proposition 2.8]{DePhilippisGigli18}. Any such limit is called an {\em asymptotic cone of $X$} and will be denoted by $C$ or by $C(Z)$ when we want to higlight the fact that $Z$ is the metric space that is the basis of the cone. We stress that every such $C$ is an $\RCD(0,N)$ space.

We refer to \cite[Definition 2.7]{DePhilippisGigli18} for the precise definition of metric cone. Notice that the class of asymptotic cones of $X$ is independent on the base point $x\in X$.
\end{defn}

\subsection{Finite perimeter sets and minimizing sequences for the isoperimetric problem}\label{sub:FiniteGH}

In this section we recall the main generalized existence result of \cite{AFP21}, which will be crucially used in this paper.
We need to recall a generalized $L^1$-notion of convergence for sets defined on a sequence of metric measure spaces converging in the pmGH sense. Such a definition is given in \cite[Definition 3.1]{AmborsioBrueSemola19}, and it is investigated in \cite{AmborsioBrueSemola19} capitalizing on the results in \cite{AmbrosioHonda17}.

\begin{defn}[$L^1$-strong and $L^1_{\mathrm{loc}}$ convergence]\label{def:L1strong}
Let $\{ (X_i, \dist_i, \mathfrak{m}_i, x_i) \}_{i\in \N}$  be a sequence of pointed metric measure spaces converging in the pmGH sense to a pointed metric measure space $ (Y, \dist_Y, \mu, y)$ and let $(Z,\dist_Z)$ be a realization as in \cref{def:GHconvergence}.

We say that a sequence of Borel sets $E_i\subset X_i$ such that $\mathfrak{m}_i(E_i) < +\infty$ for any $i \in \N$ converges \emph{in the $L^1$-strong sense} to a Borel set $F\subset Y$ with $\mu(F) < +\infty$ if $\mathfrak{m}_i(E_i) \to \mu(F)$ and $\chi_{E_i}\mathfrak{m}_i \rightharpoonup \chi_F\mu$ with respect to the duality with continuous bounded functions with bounded support on $Z$.

We say that a sequence of Borel sets $E_i\subset X_i$ converges \emph{in the $L^1_{\mathrm{loc}}$-sense} to a Borel set $F\subset Y$ if $E_i\cap B_R(x_i)$ converges to $F\cap B_R(y)$ in $L^1$-strong for every $R>0$.
\end{defn}

Observe that in the above definition it makes sense to speak about the convergence $\chi_{E_i}\mathfrak{m}_i \rightharpoonup \chi_F\mu$ with respect to the duality with continuous bounded functions with bounded support on $Z$ as $(X_i,\dist_i),(Y,\dist_Y)$ can be assumed to be topological subspaces of $(Z,\dist_Z)$ by means of the isometries $\Psi_i,\Psi$ of \cref{def:GHconvergence}, and the measures $\mathfrak{m}_i,\mu$ can be then identified with the push-forwards $(\Psi_i)_\sharp\mathfrak{m}_i,\Psi_\sharp\mu$ respectively.

Let us recall here for the reader's convience the main result of \cite{AFP21}, namely \cite[Theorem 1.1]{AFP21}. We will crucially need this result for the proof of our main results.
\begin{thm}[{Asymptotic mass decomposition \cite[Theorem 1.1]{AFP21}}]\label{thm:MassDecomposition}
Let $(M^n,g)$ be a noncollapsed noncompact complete manifold with infinite volume, such that $\ric\ge K$ for some $K\in(-\infty, 0]$, and let $V>0$. For every minimizing (for the perimeter) sequence of sets $\Omega_i\subset M^n$ of volume $V$, with $\Omega_i$ bounded for any $i$, up to passing to a subsequence, there exist an increasing sequence $\{N_i\}_{i\in\mathbb N}\subseteq \mathbb N$, disjoint finite perimeter sets $\Omega_i^c, \Omega_{i,j}^d \subset \Omega_i$, and points $p_{i,j}$, with $1\leq j\leq N_i$ for any $i$, such that
\begin{itemize}
    \item[(i)] $\lim_{i} \dist(p_{i,j},p_{i,\ell}) = \lim_{i} \dist(p_{i,j},o)=+\infty$, for any $j\neq \ell<\overline N+1$ and any $o\in M^n$, where $\overline N:=\lim_i N_i \in \N \cup \{+\infty\}$;
    
    \item[(ii)] $\Omega_i^c$ converges to $\Omega\subset M^n$ in the sense of finite perimeter sets (\cref{def:ConvergenceFinitePerimeter}), and we have $\vol(\Omega_i^c)\to_i \vol(\Omega)$, and $ P( \Omega_i^c) \to_i P(\Omega)$. Moreover $\Omega$ is a bounded isoperimetric region on $M$;
    
    \item[(iii)] for every $j<\overline N+1$, $(M^n,\dist,\vol,p_{i,j})$ converges in the pmGH sense  to a pointed $\RCD(K,n)$ space $(X_j,\dist_j,\meas_j,p_j)$, where $\meas_j$ is the $n$-dimensional Hausdorff measure on $(X_j,\dist_j)$. Moreover there are isoperimetric regions $Z_j \subset X_j$ such that $\Omega^d_{i,j}\to_i Z_j$ in $L^1$-strong (\cref{def:L1strong}) and $P(\Omega^d_{i,j}) \to_i P_{X_j}(Z_j)$;
    
    \item[(iv)] it holds that
    \begin{equation}\label{eq:UguaglianzeIntro}
    I_{(M^n,g)}(V) = P(\Omega) + \sum_{j=1}^{\overline{N}} P_{X_j} (Z_j),
    \qquad\qquad
    V=\vol(\Omega) +  \sum_{j=1}^{\overline{N}} \mathfrak{m}_j(Z_j).
    \end{equation}
\end{itemize}
\end{thm}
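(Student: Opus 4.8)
The plan is to run a concentration–compactness argument for the minimizing sequence, in the spirit of \cite{Nar14, MondinoNardulli16}, but carried out in the nonsmooth category so as to accommodate the blow-down limits at infinity. Fix a point $o\in M^n$ and a minimizing sequence $\Omega_i$ of bounded sets with $\vol(\Omega_i)=V$ and $P(\Omega_i)\to I_{(M^n,g)}(V)$. Since $\ric\ge K$ with $K\le0$ and $(M^n,g)$ is noncollapsed, the Bishop--Gromov inequality (\cref{thm:BishopGromov}) yields a uniform local doubling property and a uniform local Poincar\'e inequality, hence the local compactness of $BV$; thus, up to a subsequence, $\Omega_i$ converges in $L^1_{\mathrm{loc}}$ and in the sense of finite perimeter sets to a (possibly empty) set $\Omega\subset M^n$, and lower semicontinuity of the perimeter gives $P(\Omega)\le\liminf_i P(\Omega_i,B_R(o))$ for every $R$. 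Set $\mu:=V-\vol(\Omega)\in[0,V]$, the mass lost at infinity. If $\mu=0$, a truncation/diagonal argument shows $P(\Omega)=\lim_i P(\Omega_i)=I_{(M^n,g)}(V)$, so $\Omega$ is an isoperimetric region and the statement holds with $\overline N=0$.

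Suppose $\mu>0$. The first task is to \emph{localize} the escaping mass: I would show there is a sequence $p_{i,1}$ with $\dist(p_{i,1},o)\to+\infty$ along which $\liminf_i\vol(\Omega_i\cap B_1(p_{i,1}))\ge\delta_0>0$. Indeed, if no such sequence existed, the lost mass would be spread so thinly over balls of radius $1$ that a Vitali-type covering argument combined with the uniform isoperimetric inequality for small volumes --- itself a consequence of noncollapsing --- would force $P(\Omega_i)\to+\infty$, contradicting minimality. Having fixed $p_{i,1}$, the Gromov precompactness theorem (\cref{rem:GromovPrecompactness}) together with the volume convergence theorem (\cref{thm:volumeconvergence}) gives, up to a subsequence, pmGH convergence of $(M^n,\dist,\vol,p_{i,1})$ to a pointed $\RCD(K,n)$ space $(X_1,\dist_1,\mathcal H^n,p_1)$; and by the compactness and lower semicontinuity theory for finite perimeter sets along pmGH convergence of \cite{AmborsioBrueSemola19, AmbrosioHonda17}, the sets $\Omega_i$ (read through a realization) converge in $L^1_{\mathrm{loc}}$ to a finite perimeter set $Z_1\subset X_1$ with $\mathcal H^n(Z_1)\ge\delta_0$ and $P_{X_1}(Z_1)\le\liminf_i P(\Omega_i,B_R(p_{i,1}))$ for all $R$.

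The second task is to \emph{iterate while controlling interactions}. At each stage, having produced point sequences $p_{i,1},\dots,p_{i,j}$ and limit sets $Z_1,\dots,Z_j$, I would use a Chebyshev/pigeonhole argument on dyadic annuli around $o$ and around each $p_{i,\ell}$ to select ``good radii'' $R_i\to+\infty$ along which both the volume and the perimeter of $\Omega_i$ in the intervening annular shells tend to $0$; this makes the core piece $\Omega_i^c:=\Omega_i\cap B_{R_i}(o)$ and the bubble pieces $\Omega_{i,\ell}^d$ (the portions of $\Omega_i$ in balls around $p_{i,\ell}$) pairwise disjoint, mutually far apart, and such that $\vol(\Omega_i)=\vol(\Omega_i^c)+\sum_\ell\vol(\Omega_{i,\ell}^d)+o(1)$ and, because far-apart boundaries do not interact, $P(\Omega_i)\ge P(\Omega_i^c)+\sum_\ell P(\Omega_{i,\ell}^d)-o(1)$. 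Applying the localization step to the residual escaping mass and passing to the limit --- using lower semicontinuity for each piece and the uniform small-volume isoperimetric inequality to guarantee $\sum_j\mathcal H^n(Z_j)=\mu<+\infty$ and $\sum_j P_{X_j}(Z_j)<+\infty$ --- yields the family $\{p_{i,j}\}$, the spaces $X_j$, the sets $Z_j$, and the two relations
\[
I_{(M^n,g)}(V)\ \ge\ P(\Omega)+\sum_{j=1}^{\overline N} P_{X_j}(Z_j),\qquad V=\vol(\Omega)+\sum_{j=1}^{\overline N}\mathcal H^n(Z_j).
\]

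The last task is to \emph{turn the inequality into an equality and prove that every piece is isoperimetric}, which is where minimality of $\Omega_i$ is used in reverse. If $\Omega$ were not isoperimetric for volume $\vol(\Omega)$ in $M^n$, or some $Z_j$ were not isoperimetric for volume $\mathcal H^n(Z_j)$ in $X_j$, I would take a strictly better competitor and \emph{transplant} it: a competitor in $X_j$ is pulled back, via the pmGH realization, to a finite perimeter set inside a ball around $p_{i,j}$ in $M^n$ for $i$ large, with convergence of volume and perimeter --- the needed ``recovery sequence'' again being provided by \cite{AmborsioBrueSemola19} --- after which a small volume-fixing deformation (whose perimeter cost is controlled using noncollapsing) restores the total volume $V$. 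Reassembling the modified core and bubbles produces an admissible competitor with perimeter strictly below $I_{(M^n,g)}(V)$, a contradiction; hence all inequalities above are equalities, each piece realizes the corresponding isoperimetric problem, and the core $\Omega$, being an isoperimetric region in a noncollapsed manifold with Ricci bounded below, is bounded by the standard density and regularity estimates. I expect the main obstacle to be precisely this last step together with the interaction bookkeeping in the iteration: making rigorous the simultaneous convergence of volume and perimeter for competitors transplanted between $M^n$ and the $\RCD$ limits $X_j$, and controlling the perimeter cost of the volume adjustments needed to keep the constraint, is the technical heart of the argument.
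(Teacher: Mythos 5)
This statement is not proved in the present paper at all: it is quoted, verbatim, as \cref{thm:MassDecomposition} from \cite[Theorem 1.1]{AFP21}, the only internal comment being that item (ii) is essentially \cite[Theorem 2.1]{RitRosales04} and serves as the starting point of that proof. So the comparison is really with the argument of \cite{AFP21}, and your outline follows essentially that strategy: a concentration--compactness scheme in the spirit of \cite{Nar14, MondinoNardulli16}, upgraded to the nonsmooth category by letting the divergent pieces converge, via Gromov precompactness and volume convergence (\cref{rem:GromovPrecompactness}, \cref{thm:volumeconvergence}), to finite perimeter sets in pointed $\RCD(K,n)$ limits, with the $L^1$-strong compactness, lower semicontinuity and recovery-sequence machinery of \cite{AmborsioBrueSemola19, AmbrosioHonda17}; your ``transplanting'' step is exactly what is packaged in \cite[Proposition 3.2]{AFP21} (the profile of $M$ at a sum of volumes is at most the profile of the core volume plus the sum of the profiles of the limit spaces), the no-vanishing step rests, as you say, on the uniform small-volume isoperimetric inequality coming from noncollapsedness, and boundedness of $\Omega$ is \cref{cor:IsopBounded}.

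Two points in your sketch are looser than what the cited proof actually needs. First, the exhaustion bookkeeping: extracting at each stage a bubble of mass at least some $\delta_0>0$ does not by itself give the exact identity $V=\vol(\Omega)+\sum_j\meas_j(Z_j)$, because the concentration threshold can degenerate along the iteration and a residual amount of volume could still vanish; one must select at each stage a bubble capturing a fixed fraction of the maximal concentratable residual mass (or argue by a diagonal choice as in \cite{AFP21}) so that the countably many bubbles provably exhaust the lost volume, and only then does your equality-forcing argument in the last step upgrade the perimeter inequality to the equality in item (iv) and yield the convergences $P(\Omega_i^c)\to P(\Omega)$, $P(\Omega_{i,j}^d)\to P_{X_j}(Z_j)$ rather than mere semicontinuity. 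Second, the cut-and-paste along ``good radii'' and the volume-fixing corrections must be shown to have error terms summable uniformly in the (possibly infinite) number of bubbles, which is where the quantitative deformation lemmas of \cite{AFP21} enter. These are genuine technical steps, but you identify the right tools for them, so the proposal is a faithful outline of the cited proof rather than a complete argument.
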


We remark that item (ii) in \cref{thm:MassDecomposition} is, in fact, proved in \cite[Theorem 2.1]{RitRosales04}, and it consists in the starting point for the rest of the proof of \cref{thm:MassDecomposition}.

We will need the following result on the boundedness of isoperimetric regions.

\begin{prop}[{\cite[Corollary 4.2]{AFP21}}]\label{cor:IsopBounded}
Let $(M^n,g)$ be a complete noncollapsed Riemannian manifold with $\ric \geq K$ for some $K \in (- \infty, 0]$. Then the isoperimetric regions of $(M^n,g)$ are bounded.
\end{prop}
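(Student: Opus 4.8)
The plan is the classical argument for the boundedness of quasi-minimizing sets, adapted to the present setting. First I would dispose of the trivial compact case, so assume $M^n$ is noncompact; then noncollapsedness together with the Bishop--Gromov comparison (\cref{thm:BishopGromov}) forces $\vol(M^n)=+\infty$, since one can produce infinitely many pairwise disjoint balls of a fixed positive volume with diverging centers. Fix an isoperimetric region $E$ with $V:=\vol(E)<+\infty$ and a base point $o\in M^n$. The argument has three steps: (i) show that $E$ is a $(\Lambda,r_0)$-perimeter minimizer for some $\Lambda\ge 0$, $r_0>0$ (that is, $P(E)\le P(F)+\Lambda\vol(E\Delta F)$ whenever $F\Delta E\Subset B_{r_0}(x)$ for some $x$); (ii) deduce a uniform lower density estimate $\vol(E\cap B_\rho(x))\ge \bar c\,\rho^n$ for every $x\in\supp|D\chi_E|$ and every $\rho\le\bar r$; (iii) conclude that an unbounded $E$ would then contain infinitely many pairwise disjoint balls, each carrying at least $\bar c\,\bar r^n$ of volume, contradicting $\vol(E)<+\infty$.

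For step (i) the key point is to correct the volume of a competitor at a perimeter cost that is \emph{linear} in the volume change. I would fix a point $y_0\in\partial^*E$ admitting a measure-theoretic normal, and a vector field $X$ compactly supported in a small ball $B_{s_0}(y_0)$ with nonzero flux through $\partial^*E$; flowing $E$ along $X$ produces competitors $E_t$ with $\vol(E_t)=V+t$, $E_t\Delta E\Subset B_{s_0}(y_0)$, and $|P(E_t)-P(E)|\le\Lambda_0|t|$ for $|t|\le t_0$. By \cref{thm:BishopGromov} there is $r_0>0$, uniform in $x$, with $\vol(B_{r_0}(x))\le t_0$; then for any $x$ with $B_{r_0}(x)$ disjoint from $B_{s_0}(y_0)$ and any $F$ with $F\Delta E\Subset B_{r_0}(x)$ one restores the volume deficit through the flow and obtains $P(E)\le P(F)+\Lambda\vol(E\Delta F)$, with $\Lambda$ depending only on $E$ and on $n,K$. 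If instead $B_{r_0}(x)$ meets $B_{s_0}(y_0)$, then either $\partial^*E$ is contained in a fixed bounded set — in which case $E$ is already bounded, since $\vol(E^c)=+\infty$ — or another portion of $\partial^*E$ far from $x$ plays the role of $y_0$; in all cases $E$ is a $(\Lambda,r_0)$-perimeter minimizer with $\Lambda,r_0$ fixed.

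For step (ii) fix $x\in\supp|D\chi_E|$ and set $u(\rho):=\vol(E\cap B_\rho(x))$, so that $0<u(\rho)<\vol(B_\rho(x))$ for all $\rho>0$, $u$ is locally Lipschitz, and $u'(\rho)=\mathcal H^{n-1}(E\cap\partial B_\rho(x))$ for a.e.\! $\rho$ by the coarea formula. Comparing $E$ with $E\setminus B_\rho(x)$ through the $(\Lambda,r_0)$-minimality and splitting the perimeters across the sphere $\partial B_\rho(x)$ gives $P(E\cap B_\rho(x))\le 2u'(\rho)+\Lambda u(\rho)$ for a.e.\! $\rho\le r_0$. The uniform local isoperimetric inequality available on $M^n$ for sets of small volume thanks to noncollapsedness and $\ric\ge K$, applied to $E\cap B_\rho(x)$, yields $c_0\,u(\rho)^{(n-1)/n}\le 2u'(\rho)+\Lambda u(\rho)$; since $u(\rho)\le\vol(B_\rho(x))\le C\rho^n$ by \cref{thm:BishopGromov}, the last term equals $\Lambda u(\rho)^{1/n}\,u(\rho)^{(n-1)/n}\le\tfrac12 c_0\,u(\rho)^{(n-1)/n}$ for $\rho$ below a threshold, leaving $u'(\rho)\ge\tfrac{c_0}{4}u(\rho)^{(n-1)/n}$ for a.e.\! small $\rho$. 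Integrating this differential inequality (using $u(0^+)=0$) gives $u(\rho)\ge\bar c\,\rho^n$ for all $\rho\le\bar r$, with $\bar c,\bar r>0$ depending only on $E$.

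For step (iii), suppose $E$ is unbounded. Since $\vol(M^n)=+\infty$ while $\vol(E)<+\infty$, the set $\supp|D\chi_E|$ is unbounded too: otherwise, outside a large metric ball $\chi_E$ would be locally constant and $E$ would contain an unbounded connected component of the complement of that ball, which has infinite volume by noncollapsedness. Picking $x_k\in\supp|D\chi_E|$ with $\dist(x_k,o)\to+\infty$ and, after extracting a subsequence, $\dist(x_k,x_\ell)>2\bar r$ for $k\neq\ell$, the balls $B_{\bar r}(x_k)$ are pairwise disjoint and, by step (ii), $\vol(E\cap B_{\bar r}(x_k))\ge\bar c\,\bar r^n$ for every $k$; hence $\vol(E)\ge\sum_k\bar c\,\bar r^n=+\infty$, a contradiction, so $E$ is bounded. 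I expect step (i) to be the main obstacle: the \emph{linearity} in the volume of the correction is essential, since correcting the volume instead by inserting a small ball far away would cost only of order $(\text{volume})^{(n-1)/n}$, comparable to $u(\rho)^{(n-1)/n}$ and therefore useless for closing the differential inequality in step (ii). The only other place the hypotheses are genuinely used is the uniform local isoperimetric inequality in step (ii), which relies on noncollapsedness (cf.\! the equivalence recalled in \cref{lem:NonSoDoveMetterlo}).
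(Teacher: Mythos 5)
Your proposal is correct, but it does not follow the route the paper relies on. In fact the paper never proves this proposition directly: it is quoted from \cite[Corollary 4.2]{AFP21}, and the only boundedness argument actually carried out in the paper is the closely related one for volume-constrained minimizers of the penalized perimeter $P_k$ inside the proof of \cref{lem:MinimizingPk}. That argument (after Ritor\'e--Rosales and Nardulli, cf.\ also \cite[Appendix B]{AFP21}) shares your step (i) --- a local deformation restoring the volume at a perimeter cost \emph{linear} in the volume change, cf.\ \eqref{eq:LocalModificationsSets} --- and the same small-volume isoperimetric inequality from \cite[Lemma 3.2]{Heb00}, but it concludes differently: one sets $V(r)=\vol(\Omega\setminus B_r(p_0))$, $A(r)=P(\Omega,M\setminus B_r(p_0))$, derives $c_0V^{(n-1)/n}\le |V'|+A$ and $A\le |V'|+CV$, and closes with an ODE comparison forcing $V(\bar r)=0$ at some finite $\bar r$. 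You instead replace the tail-volume ODE by uniform lower density estimates at points of $\supp|D\chi_E|$ and a disjoint-ball count against $\vol(E)<+\infty$; both are standard, both genuinely need the linear volume-fixing lemma and noncollapsedness, and yours is a valid alternative (essentially the generic $(\Lambda,r_0)$-minimality argument), whereas the ODE route localizes around a single point, never needs density estimates, and yields a quantitative bound on the diameter of the region. Two points to tighten in your write-up: in step (i), to obtain one fixed pair $(\Lambda,r_0)$ valid for all competitor balls you should fix \emph{two} base points $y_0,y_1\in\partial^*E$ with $\dist(y_0,y_1)>2(s_0+r_0)$ once and for all, rather than letting ``another portion of $\partial^*E$'' depend on $x$ (harmless here, since steps (ii)--(iii) only use balls far from a fixed bounded set); and in step (iii), from boundedness of $\supp|D\chi_E|$ it does not follow that an unbounded $E$ must contain an \emph{unbounded} component of the complement of a large ball --- a priori $E$ could have full measure in infinitely many bounded components stretching to infinity --- but this case is ruled out by the same counting, since each such component containing a point at distance larger than $R+1$ from $o$ contains a unit ball and hence carries volume at least $v>0$, again contradicting $\vol(E)<+\infty$.
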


\subsection{The collapsed case}

As already observed in \cite[Remark 4.7]{AFP21}, as a nontrivial consequence of \cref{thm:MassDecomposition} we have that  complete noncollapsed manifolds with a lower bound on the Ricci curvature have strictly positive isoperimetric profile for any volume.

With an argument partly inspired by the proof of \cite[Proposition 3.13]{LeonardiRitore}, we are now going to show that, in the nonnegative Ricci case, collapsedness occurs if and only if the isoperimetric profile vanishes for any volume. In particular, no isoperimetric sets exist in this situation. 

\begin{prop}\label{lem:NonSoDoveMetterlo}
Let $(M^n,g)$ be a complete noncompact Riemannian manifold with $\ric\geq 0$. Then, $\inf_{x \in M} \vol({B_1(x)}) = 0$ if and only if $I(V) = 0$ for any positive volume $V$.
\end{prop}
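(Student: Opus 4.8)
The plan is to prove the two implications separately, as they call for different tools.

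\emph{If $I(V)=0$ for every $V>0$, then $\inf_{x\in M}\vol(B_1(x))=0$.} I would argue by contraposition. If $\inf_{x\in M}\vol(B_1(x))>0$, i.e.\ $(M^n,g)$ is noncollapsed, then, since $\ric\ge0$, the positivity of the isoperimetric profile for noncollapsed manifolds with Ricci bounded below --- recalled at the beginning of this subsection as a consequence of \cref{thm:MassDecomposition} (cf.\ \cite[Remark 4.7]{AFP21}) --- gives $I(V)>0$ for every $V>0$. This contradicts the hypothesis, so this direction is immediate.

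\emph{If $\inf_{x\in M}\vol(B_1(x))=0$, then $I(V)=0$ for every $V>0$.} Fix $V>0$; the idea is to exhibit finite perimeter sets of volume exactly $V$ whose perimeters tend to $0$, using metric balls centered at points where $(M^n,g)$ is very collapsed. First note that $\vol(M^n)=+\infty$ (a complete noncompact manifold with $\ric\ge0$ has infinite volume), so $I(V)$ is defined, and for any fixed $x$ the function $\rho\mapsto\vol(B_\rho(x))$ is continuous (distance spheres are $\vol$-negligible, by the coarea formula for $\dist(x,\cdot)$), strictly increasing (since $M^n$ is connected and noncompact, metric annuli are nonempty open sets, hence of positive volume), and surjective onto $[0,+\infty)$. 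Pick $x_k\in M$ with $v_k:=\vol(B_1(x_k))\to0$, and assume $v_k<V$. Then there is $s_k>0$ with $\vol(B_{s_k}(x_k))=V$, and $s_k>1$ because $v_k<V$. By \cref{thm:BishopGromov} with $K=0$, where $s(n,0,r)=n\omega_n r^{n-1}$ and $v(n,0,r)=\omega_n r^{n}$, the ball $B_{s_k}(x_k)$ has finite perimeter and
\begin{equation*}
P(B_{s_k}(x_k)) \ \le\ \frac{s(n,0,s_k)}{v(n,0,s_k)}\,\vol(B_{s_k}(x_k)) \ =\ \frac{nV}{s_k},
\qquad\qquad
V=\vol(B_{s_k}(x_k)) \ \le\ v_k\, s_k^{n},
\end{equation*}
the last inequality using the monotonicity of $\rho\mapsto\vol(B_\rho(x_k))/(\omega_n\rho^n)$ together with $s_k>1$. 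From the second estimate $s_k\ge(V/v_k)^{1/n}$, hence
\[
I(V) \ \le\ P(B_{s_k}(x_k)) \ \le\ \frac{nV}{s_k} \ \le\ nV^{(n-1)/n}v_k^{1/n} \ \longrightarrow\ 0 \qquad (k\to\infty).
\]
Therefore $I(V)=0$, and since $V>0$ was arbitrary, the claim follows.

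The only point requiring genuine care is that the competitors have volume \emph{exactly} $V$: removing a piece of positive measure from a ball could blow up its perimeter, so it is essential that $\rho\mapsto\vol(B_\rho(x_k))$ be truly continuous --- equivalently, that distance spheres be $\vol$-null --- so that the target value $V$ is realized by an honest metric ball, for which \cref{thm:BishopGromov} then supplies the quantitative bound $P(B_{s_k}(x_k))\le nV/s_k$. Everything else is a direct application of the Bishop--Gromov volume and perimeter comparisons, together with the fact that complete noncompact manifolds with $\ric\ge0$ have infinite volume.
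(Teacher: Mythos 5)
Your proof is correct and follows the same basic strategy as the paper: produce competitor balls of volume exactly $V$ whose radius tends to infinity and apply the Bishop--Gromov comparison \eqref{3} to get $P(B)\le nV/R\to 0$. The difference lies in how the exact-volume ball is obtained. The paper fixes a large radius $R$, observes via \eqref{1} that $\vol(B_R(x_j))\to 0$ along the collapsing sequence while $\vol(B_R(x))>V$ for some $x$, finds a center $\overline{x}$ with $\vol(B_R(\overline{x}))=V$ by continuity of the volume in the center variable, and then lets $R\to\infty$ directly. You instead fix the collapsing centers $x_k$ and vary the radius, which requires two extra ingredients: the infinite volume of complete noncompact manifolds with $\ric\ge0$ (Calabi--Yau), to guarantee that a radius $s_k$ with $\vol(B_{s_k}(x_k))=V$ exists, and the Bishop--Gromov monotonicity \eqref{1} to force $s_k\ge (V/v_k)^{1/n}\to\infty$. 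Both routes are sound; yours yields the explicit decay rate $I(V)\le n V^{(n-1)/n}\vol(B_1(x_k))^{1/n}$, while the paper's avoids invoking Calabi--Yau at that step (infinite volume is of course implicitly needed in both for $I(V)$ to be defined at every $V>0$). The easy direction is handled identically in the two arguments.
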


\begin{proof}
As observed above, if $M$ is noncollapsed, then $I(V)>0$ for any $V$ by \cite[Remark 4.7]{AFP21}. Then let us assume that $\inf_{x \in M} \vol(B_1(x))=0$.
Observe first that $\inf_{x \in M} \vol (B_R(x)) = 0$ for any $R > 0$. Indeed, let $\{x_j\}_{j \in \N}$ be a sequence of points such that $\lim_{j \to \infty} \vol B_1(x_j) = 0$. 
Now, for $R \leq 1$ we have $\vol B_R(x_j) \leq \vol(B_1(x_j))\to_j 0$, while for $R>1$, by the Bishop--Gromov volume comparison, we have $\vol (B_R(x_j)) \leq R^n \vol (B_1(x_j))\to_j 0$. Fix now a volume $V > 0$, and consider an arbitrary ball $B_R(x)$ such that $\vol (B_R(x)) > V$. Since, as we just showed, $\vol (B_R(x_j)) \to 0$ along the sequence $x_j$ above we can in particular find a point $\overline{x} \in M$ such that $\vol (B_R(\overline{x})) = V$.  By \eqref{3}, we have $ P(B_R(\overline{x})) \leq n R^{-1} \vol (B_R(\overline{x}))$, and thus 
\begin{equation}
    \label{Ivaa0}
    I(V) \leq P(B_R(\overline{x}))\leq  \frac{n}{R} V.
\end{equation}
Observing that the argument actually holds true for arbitrarily big radii $R$, our claim is proved by letting $R \to \infty$ in \eqref{Ivaa0}.
\end{proof}
Explicit examples of collapsed manifolds with nonnegative Ricci curvature have been constructed in dimension $n \geq  4$ by Croke and Karcher in \cite[Example 1 and Example 2]{CrokeKarcher}. Hence for such examples no isoperimetric regions of positive volume can exist.

In the same paper, it is shown that the latter examples do not exist in dimension $n = 2$. As a consequence of \cref{lem:NonSoDoveMetterlo} and a former result of Ritoré \cite[Theorem 1.1]{RitoreExistenceSurfaces01}, asserting that surfaces with nonnegative curvature admit isoperimetric regions of any given volume, we are able to fully recover such a positive result proved in \cite[Theorem A]{CrokeKarcher}. We remark that the proof given here is completely different from the one given in \cite{CrokeKarcher} and recovers the geometric conclusion of the statement passing through the isoperimetric problem.

\begin{cor}
Let $(M^2,g)$ be a noncompact complete Riemannian surface with $\sect \geq 0$. Hence there exists $C:=C(M)$ such that 
$$
\vol(B(x,1))\geq C, \qquad \forall x\in M.
$$
\end{cor}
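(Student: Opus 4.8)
The plan is to argue by contradiction, feeding Ritoré's existence theorem into \cref{lem:NonSoDoveMetterlo}. Suppose $\inf_{x\in M}\vol(B_1(x)) = 0$. Since $\sect\geq 0$ forces $\ric\geq 0$, \cref{lem:NonSoDoveMetterlo} applies and gives $I_{(M^2,g)}(V) = 0$ for every positive volume $V$. Note moreover that a complete noncompact manifold with $\ric\ge 0$ has infinite total volume (by the Calabi--Yau linear volume growth estimate), so that every $V>0$ is an admissible volume for the isoperimetric problem on $M^2$.

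Fix any $V>0$. By \cite[Theorem 1.1]{RitoreExistenceSurfaces01}, a complete noncompact surface with nonnegative Gauss curvature possesses an isoperimetric region of volume $V$; call it $E$. Then $\vol(E)=V>0$ while $P(E)=I_{(M^2,g)}(V)=0$. This is impossible: a finite perimeter set with $P(E)=0$ satisfies $D\chi_E=0$, hence $\chi_E$ is locally constant and, $M^2$ being connected, $\vol$-a.e.\ constant; thus $\vol(E)\in\{0,\vol(M^2)\}=\{0,+\infty\}$, contradicting $0<\vol(E)=V<+\infty$.

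Therefore $C:=\inf_{x\in M}\vol(B_1(x))>0$, which is exactly the asserted bound; in other words, complete noncompact surfaces of nonnegative curvature are noncollapsed, recovering \cite[Theorem A]{CrokeKarcher}. I do not expect a genuine obstacle here: the two substantial ingredients, \cref{lem:NonSoDoveMetterlo} and Ritoré's existence theorem, may both be invoked directly, and the remaining point --- that a null-perimeter subset of a connected manifold is trivial up to a $\vol$-negligible set --- is elementary.
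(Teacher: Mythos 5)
Your argument is correct and is essentially the proof given in the paper: collapsedness would force $I_{(M^2,g)}\equiv 0$ by \cref{lem:NonSoDoveMetterlo}, which is incompatible with Ritor\'e's existence theorem for isoperimetric regions on nonnegatively curved surfaces. The extra details you supply (infinite volume via Calabi--Yau, and that a null-perimeter set in a connected manifold is trivial, so $I(V)=0$ rules out isoperimetric regions of positive volume) are exactly the points the paper leaves implicit.
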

\begin{proof}
If the conclusion does not hold, from \cref{lem:NonSoDoveMetterlo} we get that $I_{(M^2,g)}(V)=0$ for every $V>0$ and hence no isoperimetric regions of positive volume can exist on $(M^2,g)$. But this is in contradiction with \cite[Theorem 1.1 \& Theorem 2.1]{RitoreExistenceSurfaces01}.
\end{proof}

\begin{remark}
The proof of the previous result \cref{lem:NonSoDoveMetterlo} builds on uses the Bishop--Gromov monotonicity results, which hold for $\CD(0,n)$ spaces, cf. \cref{rem:PerimeterMMS2}. In fact, one can show with the same argument that also on a $\CD(0,n)$ space $(X,\dist,\meas)$, if $\inf_{x\in X}\meas(B_1(x))=0$, then the isoperimetric profile identically vanishes.
\end{remark}

\subsection{Sharp Sobolev inequality on $\CD(0,n)$ spaces}
Here, we improve the sharp isoperimetric inequality on $\CD(0, n)$ recently obtained in \cite{BaloghKristaly}. Namely, while such result is written in terms of the Minkowski content of the set involved, we provide one, with the same sharp constant, in terms of the perimeter. In doing so, we are actually getting also a sharp Sobolev inequality in these spaces. Earlier versions of inequalities like these have been obtained in \cite{AgostinianiFogagnoloMazzieri, BrendleFigo, FogagnoloMazzieri}. 

\begin{thm}[Sharp Sobolev inequality on $\CD(0,n)$ spaces]\label{thm:IsoperimetriSharpBrendle}
Let $(X,\dist,\meas)$ be a $\CD(0,n)$ space. Then for any $f\in \text{BV}(X,\dist,\meas)$ it holds
\begin{equation}\label{eq:Sobolevsharp}
    n \omega_n^{\frac{1}{n}} \mathrm{AVR}(X,\dist,\meas)^{\frac{1}{n}} \left( \int_X |f|^{\frac{n}{n-1}} \de \meas   \right)^{\frac{n-1}{n}}
    \le 
    |D f|(X) \, .
\end{equation}
In particular, for any set of finite perimeter $E$, with $\meas(E)<\infty$, we have
\begin{equation}\label{eq:isoperimetricCD}
    P_X(E)\geq n\omega_n^{\frac{1}{n}}\mathrm{AVR}(X,\dist,\meas)^{\frac{1}{n}}\meas(E)^{\frac{n-1}{n}}.
\end{equation}
\end{thm}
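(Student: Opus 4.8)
The plan is to deduce the Sobolev inequality \eqref{eq:Sobolevsharp} from the isoperimetric inequality of \cite{BaloghKristaly}, which is phrased in terms of the Minkowski content, and then recover \eqref{eq:isoperimetricCD} as the special case $f = \chi_E$. The essential point is twofold: first, upgrade the Minkowski-content inequality to a perimeter inequality, and second, pass from sets to $BV$ functions via a coarea argument.

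\emph{Step 1: From Minkowski content to perimeter.} Recall that for a bounded open set $\Omega$ with, say, Lipschitz boundary (or more generally any Borel set) the Minkowski content dominates the perimeter: $P(\Omega,X) \le \mathcal{M}^*(\partial\Omega) := \liminf_{r\to 0^+} \meas(\{x : \dist(x,\Omega)<r, \ x \notin \Omega\})/r$, and the two agree for sufficiently regular sets. First I would invoke \cite{BaloghKristaly} to obtain $\mathcal{M}^*(\partial E) \ge n\omega_n^{1/n}\mathrm{AVR}(X,\dist,\meas)^{1/n}\meas(E)^{(n-1)/n}$ for all bounded Borel $E$; since I want this with the perimeter on the left, I instead apply it to superlevel sets and integrate. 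More precisely, the cleanest route is: approximate $f \in BV(X,\dist,\meas)$ by $f_k \in \mathrm{Lip}(X)$ with bounded support, $f_k \to f$ in $L^1$ and $\int_X \lip(f_k)\,\de\meas \to |Df|(X)$, as furnished by \cref{lemma:approximationSobolevfunctio}. By truncation and taking $|f_k|$ it is enough to treat nonnegative, bounded, compactly supported Lipschitz functions.

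\emph{Step 2: Coarea and the one-dimensional argument.} For such an $f_k \ge 0$, the localized coarea formula of \cref{rem:PerimeterMMS} gives $\int_X \lip(f_k)\,\de\meas = \int_X |Df_k|\,\de\meas$ (up to the identification $\lip f_k = |Df_k|$ density for Lipschitz $f_k$; more carefully one uses $\int_X \lip f_k \,\de\meas \ge \int_0^\infty P(\{f_k>t\},X)\,\de t$, which is the inequality direction one needs and follows from the layer-cake structure of the slope together with $\{f_k > t\}$ having finite perimeter for a.e.\ $t$). For each such $t$, the set $\{f_k>t\}$ is bounded, hence by the Minkowski-content isoperimetric inequality — applied after noting $P(\{f_k>t\},X) \le \mathcal{M}^*(\partial\{f_k>t\})$, which for a.e.\ level $t$ of a Lipschitz function is in fact an equality by the coarea structure — we get
\[
P(\{f_k>t\},X) \ge n\omega_n^{1/n}\mathrm{AVR}(X,\dist,\meas)^{1/n}\,\meas(\{f_k>t\})^{(n-1)/n}.
\]
Integrating in $t$ and using the one-dimensional Bliss/Hardy-type inequality $\int_0^\infty \meas(\{f_k>t\})^{(n-1)/n}\,\de t \ge \left(\int_X f_k^{n/(n-1)}\,\de\meas\right)^{(n-1)/n}$ (equivalently, the sharp embedding $\|g\|_{L^{p^*}} \lesssim \int_0^\infty |\{g>t\}|^{1/p^*}\de t$ with $p=1$, $p^* = n/(n-1)$), yields \eqref{eq:Sobolevsharp} for $f_k$; letting $k\to\infty$ and using lower semicontinuity of the $L^{n/(n-1)}$ norm under $L^1$ convergence (or Fatou after passing to an a.e.-convergent subsequence) on the left and $\int \lip f_k \to |Df|(X)$ on the right finishes the general $BV$ case.

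\emph{Main obstacle.} The delicate point is the first step — justifying that the Minkowski-content inequality of \cite{BaloghKristaly} can be transferred to the perimeter of superlevel sets with \emph{no loss in the constant}. For a general Borel set the perimeter is only $\le$ the Minkowski content, which is the wrong direction if applied naively to $E$ itself; the resolution is precisely to work with the superlevel sets $\{f_k > t\}$ of a Lipschitz function, for which the coarea formula guarantees that for a.e.\ $t$ the perimeter measure, the restricted Hausdorff-type measure, and the Minkowski content of $\partial\{f_k>t\}$ all coincide, so that \cite{BaloghKristaly} applies to each slice with the sharp constant and integration preserves it. Verifying this level-set regularity in the $\CD(0,n)$ setting — relying on the fine properties of $BV$ functions and perimeter on m.m.s.\ recalled in \cref{def:BVperimetro} and \cref{rem:PerimeterMMS}, together with the structure theory of \cite{AmbrosioDiMarino14, Miranda03} — is where the real work lies; once it is in place, the passage from sets to functions is the standard coarea-plus-one-dimensional-inequality argument and the reduction $E \mapsto \chi_E$ for \eqref{eq:isoperimetricCD} is immediate.
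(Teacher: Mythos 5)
Your overall scaffolding (reduce to nonnegative Lipschitz functions with bounded support via \cref{lemma:approximationSobolevfunctio}, slice into superlevel sets, apply the Balogh--Kristály bound levelwise, use the one-dimensional Cavalieri-type inequality, pass to the limit by Fatou, and finally take $f=\chi_E$) is the same as the paper's. But the step you yourself flag as the ``main obstacle'' is a genuine gap, and it is the crux of the theorem: you route the argument through $\int_X \lip (f_k)\de\meas \ge \int_0^\infty P(\{f_k>t\})\de t$ and then need the \emph{lower} bound $P(\{f_k>t\})\ge n\omega_n^{1/n}\mathrm{AVR}^{1/n}\meas(\{f_k>t\})^{(n-1)/n}$ for a.e.\ $t$, which you try to extract from \eqref{eq:Balogh-Kristaly} by asserting that for a.e.\ level of a Lipschitz function the perimeter coincides with the Minkowski content of $\partial\{f_k>t\}$ ``by the coarea structure''. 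This is not justified, and as stated it is doubtful: the elementary comparison only gives $P\le \meas^+$, i.e.\ the wrong direction, and equality of perimeter with the Minkowski content of the topological boundary can fail; a correct version for a.e.\ $t$ (with $\meas^+$ of the set rather than of its boundary) is not a quotable standard fact and would itself require nontrivial extra input (e.g.\ the identification $|Df|(X)=\int_X\lip f\de\meas$ for Lipschitz $f$ in PI spaces, combined with the very coarea identity below), none of which you supply.

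The paper closes exactly this point differently, and more economically: it never looks at the perimeter of individual level sets. By \cite[Proposition 4.2]{AmbrosioGigliDimarino}, for a nonnegative Lipschitz $f$ with bounded support one has the coarea-type identity
\begin{equation*}
\int_X \lip(f)\de\meas=\int_0^\infty \meas^+(\{f>t\})\de t ,
\end{equation*}
so \eqref{eq:Balogh-Kristaly} is applied directly to the Minkowski contents $\meas^+(\{f>t\})$, then \cref{lemma:elementary} together with the coarea formula of \cref{rem:PerimeterMMS} gives the $L^{n/(n-1)}$ lower bound, and the perimeter/total variation enters only at the end through relaxation, since $\int_X\lip(f_k)\de\meas\to|Df|(X)$ along the approximating sequence of \cref{lemma:approximationSobolevfunctio}; the inequality \eqref{eq:isoperimetricCD} then follows by plugging $f=\chi_E$. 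So either replace your levelwise perimeter--Minkowski comparison by this identity (which makes the comparison unnecessary), or supply a full proof of the a.e.\ equality you invoke; as written, the proposal does not close.
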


The isoperimetric inequality \cite[Theorem 1.1]{BaloghKristaly} reads, for any bounded subset $E\subset X$ of a $\CD(0,n)$ space $(X,\dist,\meas)$,
\begin{equation}\label{eq:Balogh-Kristaly}
    \meas^+(E) \ge  n\omega_n^{\frac{1}{n}} \mathrm{AVR}(X,\dist,\meas)^{\frac{1}{n}} \meas(E)^{\frac{n-1}{n}} \, ,
\end{equation}
where $\meas^+$ denotes the (lower) Minkowski content
\begin{equation}
    \meas^+(E):= \liminf_{r\to 0} \frac{\meas(B_r(E))-\meas(E)}{r}\, .
\end{equation}
The proof of \cref{thm:IsoperimetriSharpBrendle} substantially follows from \eqref{eq:Balogh-Kristaly} together with a relaxation argument \cite{AmbrosioGigliDimarino}. Observe that \eqref{eq:isoperimetricCD} also allows for unbounded sets.

\smallskip

We will need  the following elementary lemma proven in  \cite[Lemma A.24]{AmbrosioPDE}.
\begin{lemma}\label{lemma:elementary}
Let $G:[0,\infty)\to [0,\infty)$ a nonincreasing measurable function. Then for any $\alpha \ge 1$ we have
\begin{equation}
    \alpha \int_0^\infty t^{\alpha-1} G(t) \de t 
    \le
    \left( \int_0^{\infty} G^{1/\alpha}(t) \de t   \right)^{\alpha} \, .
\end{equation}
\end{lemma}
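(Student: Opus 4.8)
The plan is to reduce the statement to an elementary property of the concave primitive of a monotone function. First I would set $H \eqdef G^{1/\alpha}$, which is again nonnegative, measurable and nonincreasing, so that the claimed inequality becomes
\[
\alpha \int_0^\infty t^{\alpha-1} H(t)^\alpha \de t \le \left( \int_0^\infty H(t) \de t \right)^{\alpha}.
\]
If $\int_0^\infty H \de t = +\infty$ the right-hand side is infinite and there is nothing to prove, so I may assume this integral is finite. Then $F(t) \eqdef \int_0^t H(s)\de s$ is finite for every $t$, nondecreasing, bounded, and — being the primitive of the nonincreasing function $H$ — concave on $[0,\infty)$ with $F(0)=0$; moreover $F$ is locally absolutely continuous with $F' = H$ almost everywhere, and $\lim_{R\to\infty} F(R) = \int_0^\infty H\de t$.

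The one substantive step is the observation that concavity together with $F(0)=0$ forces $t\mapsto F(t)/t$ to be nonincreasing, equivalently $t F'(t) \le F(t)$ for a.e.\ $t>0$: at any point of differentiability the supporting-line inequality $F(0) \le F(t) + F'(t)(0-t)$ rearranges to exactly $tF'(t)\le F(t)$. Since $\alpha-1\ge 0$, raising to the power $\alpha-1$ gives $t^{\alpha-1}H(t)^{\alpha-1} \le F(t)^{\alpha-1}$ a.e.; multiplying by $\alpha H(t)\ge 0$ and integrating yields
\[
\alpha \int_0^\infty t^{\alpha-1} H(t)^\alpha \de t \le \alpha \int_0^\infty F(t)^{\alpha-1} H(t)\de t = \alpha \int_0^\infty F(t)^{\alpha-1} F'(t)\de t.
\]
Finally, since $x\mapsto x^\alpha$ is $C^1$ on $[0,\infty)$ and $F$ is locally absolutely continuous, the composition $F^\alpha$ is locally absolutely continuous with $(F^\alpha)' = \alpha F^{\alpha-1}F'$ a.e.; hence for every $R>0$ the fundamental theorem of calculus gives $\alpha\int_0^R F^{\alpha-1}F'\de t = F(R)^\alpha - F(0)^\alpha = F(R)^\alpha$, and letting $R\to\infty$ produces $\bigl(\int_0^\infty H\de t\bigr)^\alpha$, which is the claimed bound (and, undoing the substitution, exactly the statement of the lemma).

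I do not expect any genuine difficulty here: the concavity estimate $tF'(t)\le F(t)$ is the only real idea, and everything else is routine measure-theoretic bookkeeping — local absolute continuity of $F$ and of $F^\alpha$, a.e.\ differentiability of concave functions, the passage to the limit $R\to\infty$, and the trivial dichotomy according to whether $\int_0^\infty G^{1/\alpha}\de t$ is finite. If one prefers to avoid even this, an alternative is to prove the inequality first for nonincreasing step functions $G=\sum_j v_j\mathbf 1_{[a_{j-1},a_j)}$ (where it reduces to the purely algebraic inequality $\sum_j v_j(a_j^\alpha-a_{j-1}^\alpha)\le(\sum_j v_j^{1/\alpha}(a_j-a_{j-1}))^\alpha$ with $v_j$ nonincreasing, provable by induction on the number of steps) and then pass to the general case by monotone approximation of $G$ from below; but the concavity argument above seems the cleanest route.
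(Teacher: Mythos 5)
Your argument is correct. Note that the paper does not prove this lemma at all: it is quoted with a citation to \cite[Lemma A.24]{AmbrosioPDE}, so your write-up supplies a self-contained proof where the paper relies on an external reference; the standard proof in the literature runs along essentially the same line as yours (compare the truncated quantities and use the fundamental theorem of calculus for $F^\alpha$), so there is no genuinely different route at play. One small simplification worth making: you do not need concavity, supporting lines, or a.e.\ differentiability at all, since the key estimate $t\,H(t)\le F(t)=\int_0^t H(s)\de s$ holds for \emph{every} $t$ directly from the monotonicity of $H$ (as $H(s)\ge H(t)$ for $s\le t$); with that observation the rest of your chain --- multiply by $\alpha H(t)$, integrate, identify $\alpha F^{\alpha-1}H$ with $(F^\alpha)'$ via local absolute continuity, and let $R\to\infty$ by monotone convergence --- goes through unchanged, and the trivial dichotomy on whether $\int_0^\infty G^{1/\alpha}\de t$ is finite is handled exactly as you say.
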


\begin{proof}[Proof of \cref{thm:IsoperimetriSharpBrendle}]

Let $f:X\to [0,\infty)$ be a Lipschitz function with bounded support. 
From \cite[Proposition 4.2]{AmbrosioGigliDimarino} we know that
\begin{equation*}
    \int_X \lip(f) \de \meas = \int_0^{\infty} \meas^+(\{f>t\}) \de t,
\end{equation*}
hence, from \eqref{eq:Balogh-Kristaly} we deduce
\begin{align*}
  \int_X \lip(f) \de \meas \,
  =
  \int_0^\infty\meas^+(\{f>t\}) \de t
  \, \ge
  n \omega_n^{\frac{1}{n}} \mathrm{AVR}(X,\dist,\meas)^{\frac{1}{n}} \int_0^\infty \meas(\{f>t\})^{\frac{n-1}{n}} \de t.
\end{align*}
A simple application of \cref{lemma:elementary} coupled with the coarea formula in \cref{rem:PerimeterMMS} gives
\begin{equation*}
    \int_0^\infty \meas(\{f>t\})^{\frac{n-1}{n}} \de t
    \ge
    \left( \int_X f^{\frac{n}{n-1}} \de\meas   \right)^{\frac{n-1}{n}}\, ,
\end{equation*}
for any nonnegative Lipschitz function with bounded support.
We can easily drop the assumption on the sign of $f$ by using the decomposition in positive and negative part. Let us finally
show \eqref{eq:Sobolevsharp} via approximation argument. Let us fix $f\in BV(X,\dist,\meas)$, and approximate it with $(f_k)$ as in \cref{lemma:approximationSobolevfunctio}. For any $k\in \mathbb{N}$ it holds 
\begin{equation*}
    \int_X \lip(f_k) \de\meas \ge n \omega_n^{\frac{1}{n}} \mathrm{AVR}(X,\dist,\meas)^{\frac{1}{n}}
    \left( \int_X |f_k|^{\frac{n}{n-1}} \de \meas   \right)^{\frac{n-1}{n}} \, ,
\end{equation*}
and passing to the limit as $k\to \infty$ we deduce
\begin{align*}
    |Df|(X)  & \ge \liminf_{k\to \infty} n \omega_n^{\frac{1}{n}} \mathrm{AVR}(X,\dist,\meas)^{\frac{1}{n}}
    \left( \int_X |f_k|^{\frac{n}{n-1}} \de \meas   \right)^{\frac{n-1}{n}} 
    \\ & \ge \omega_n^{\frac{1}{n}} \mathrm{AVR}(X,\dist,\meas)^{\frac{1}{n}}
    \left( \int_X |f|^{\frac{n}{n-1}} \de \meas   \right)^{\frac{n-1}{n}} \, 
\end{align*}
where the last inequality follows from Fatou's lemma.

The last conclusion \eqref{eq:isoperimetricCD} comes by plugging $f=\chi_E$ in \eqref{eq:Sobolevsharp}.
\end{proof}

\section{Concavity properties of the isoperimetric profile}\label{sec:Concavity}

In this section we study concavity properties of the isoperimetric profile of a noncollapsed Riemannian manifold with Ricci bounded from below. We then apply the obtained results to the case of manifolds with nonnegative Ricci curvature and Euclidean volume growth. In the first part of this section we consider a {\em penalized} isoperimetric problem and we show that its associated isoperimetric profile converges locally uniformly to the isoperimetric profile of the manifold.

Throughout this section let $(M^n,g)$ be a fixed complete Riemannian manifold, and let $o \in M$.
Fix a sequence $\eps_k\searrow 0$ and for any $k\in\mathbb N$ let $U_k$ be a bounded open smooth set in $M$ containing the ball $B_{1/\eps_k}(o)$ such that $U_k\subset U_{k+1}$; and choose $f_k\in C^\infty(M)$ such that
\[
f_k\ge0 ,
\qquad
f_k|_{U_k} \equiv 0, 
\qquad
|\nabla f_k|\le \eps_k,
\qquad
f_{k+1}\le f_k,
\]
for any $k\in \N$. The previous choice can be made in such a way that for any $k$ and for any $N>0$ there is $R>0$ such that $f_k>N$ on $M\setminus B_R(o)$. Define, for any $k\in\mathbb N$,
\begin{equation}\label{eq:DefPk}
P_k(\Omega) \eqdef P(\Omega) + \int_\Omega f_k \de \vol,
\end{equation}
for any set of finite perimeter $\Omega\subset M$, and
\begin{equation}\label{eq:DefIk}
I_k(V) \eqdef \inf\{ P_k(\Omega) \st \text{$\Omega$ is a finite perimeter set with}\, \vol(\Omega) = V\},
\end{equation}
for any $V>0$. For simplicity, we shall denote by $I$ the isoperimetric profile $I_{(M^n,g)}$. 

\begin{lemma}\label{lem:ApprossimazioneProfili}
Suppose that $(M^n,g)$ is noncollapsed and that $\ric \ge (n-1)K$ for some $K \in \R$. Then the following two facts hold.
\begin{enumerate}
    \item For any $k\in \N$ and any $V>0$ there exists $\Omega$ of volume $V$ such that $I_k(V) = P_k(\Omega)$.
    
    \item For any $k\in\N$ the function $I_k:(0,+\infty)\to \R$ is continuous and the sequence $\{I_k\}_{k\in \N}$ converges from above to the isoperimetric profile $I_{(M^n,g)}$ locally uniformly on $(0,+\infty)$.
\end{enumerate}
\end{lemma}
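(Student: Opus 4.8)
The plan is to establish the two assertions in sequence, since the continuity of $I_k$ and the locally uniform convergence will both build on the existence of minimizers from item (1).

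\emph{Item (1): existence of penalized minimizers.} First I would take a minimizing sequence $\Omega_j$ for $P_k(\cdot)$ among sets of volume $V$, and note that since $P(\Omega_j) \le P_k(\Omega_j) \le I_k(V) + 1$ for large $j$, the perimeters are uniformly bounded; moreover $\int_{\Omega_j} f_k \de\vol \le I_k(V) + 1$ forces a tightness-type control, because the choice of $f_k$ guarantees that for any $N$ there is $R$ with $f_k > N$ outside $B_R(o)$, so $\vol(\Omega_j \setminus B_R(o)) \le (I_k(V)+1)/N$. Hence no volume escapes to infinity. By the noncollapsing assumption and $\ric \ge (n-1)K$, the manifold has the measure-doubling and local compactness properties needed for the usual $BV$ compactness argument (one may also invoke the mass decomposition \cref{thm:MassDecomposition}, but the penalization makes the direct approach cleaner): passing to a subsequence, $\Omega_j \to \Omega$ in $L^1_{\rm loc}$ and in the sense of finite perimeter sets, $\vol(\Omega) = V$ by the tightness just noted, $P(\Omega) \le \liminf_j P(\Omega_j)$ by lower semicontinuity, and $\int_\Omega f_k \de\vol \le \liminf_j \int_{\Omega_j} f_k \de\vol$ by Fatou together with the $L^1_{\rm loc}$ convergence (using that $f_k$ is bounded on bounded sets and the tightness again to handle the tails). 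Therefore $P_k(\Omega) \le I_k(V)$, and since $\vol(\Omega) = V$ the reverse inequality is trivial, so $\Omega$ is a minimizer.

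\emph{Item (2): continuity of $I_k$ and monotone locally uniform convergence.} Since $f_k \ge 0$ we have $P_k \ge P$, hence $I_k \ge I$ pointwise, and $f_{k+1} \le f_k$ gives $I_{k+1} \le I_k$, so $\{I_k\}$ decreases to a limit $\ge I$. To show the limit is exactly $I$ and the convergence is locally uniform, fix a volume $V$ and a competitor $\Omega$ for $I(V)$; by \cref{rem:Approximation} we may take $\Omega \Subset M$ relatively compact with smooth boundary, say $\Omega \subset B_R(o)$. For $k$ large enough that $B_R(o) \subset U_k$ we have $f_k|_\Omega \equiv 0$, hence $P_k(\Omega) = P(\Omega)$ and thus $I_k(V) \le P(\Omega)$; taking the infimum over such $\Omega$ gives $I_k(V) \le I(V)$ for all sufficiently large $k$, whence $I_k(V) \to I(V)$. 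For local uniformity on a compact interval $[a,b] \subset (0,+\infty)$, the key point is an \emph{equi-continuity} estimate: for $0 < V < V'$ with $V'$ in a bounded range, one can modify a near-optimal bounded competitor for $I_k(V)$ by adding or removing a small ball (whose radius is controlled below by noncollapsing) to change its volume by $V' - V$, changing $P_k$ by at most $C(V'-V)^{(n-1)/n}$ for a constant $C$ depending only on $a,b$, $n$, $K$ and $k$ — but in fact, because the competitors can be kept inside a fixed bounded region and $f_k$ is bounded there, $C$ can be taken independent of $k$. This yields a uniform modulus of continuity for all $I_k$ on $[a,b]$, and each $I_k$ is then continuous; combined with monotone pointwise convergence to the continuous function $I$, Dini's theorem upgrades this to locally uniform convergence.

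\emph{Main obstacle.} The delicate step is the uniform-in-$k$ equicontinuity of the profiles $I_k$ on compact volume intervals: one must show that a near-minimizer for $I_k(V)$ can be assumed to live in a bounded region \emph{independent of $k$} (using the penalization to push mass inward, but uniformly), so that the ``add/remove a small ball'' surgery changes the potential term $\int f_k$ in a controlled way; without uniformity in $k$ one only gets continuity of each $I_k$ separately, and then the passage to locally uniform convergence of the limit would require the additional input that $I$ itself is continuous (which is known, e.g. via \cite{FloresNardulli20}) plus Dini. I would present the argument so that it works either way, invoking continuity of $I$ as a fallback.
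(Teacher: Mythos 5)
Your proof follows essentially the same route as the paper: for item (1), the direct method with the penalization term providing tightness (no volume lost at infinity) together with lower semicontinuity of $P_k$; for item (2), monotonicity $I\le I_{k+1}\le I_k$, pointwise convergence $I_k(V)\to I(V)$ obtained from a bounded smooth competitor that lies in $U_k$ for $k$ large (so that $P_k=P$ on it, cf. \cref{rem:Approximation}), continuity of each $I_k$ via small-ball surgery on near-minimizers, and finally the continuity and strict positivity of $I$ from \cite{FloresNardulli20} and \cite{AFP21} together with Dini's theorem. The paper proves continuity of $I_k$ by combining lower semicontinuity (compactness of the minimizers produced in item (1)) with an upper-semicontinuity ball-surgery argument, which is a minor variant of your two-sided modulus.

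One caveat on your ``primary'' route to local uniformity: the claim that an equicontinuity modulus can be taken \emph{independent of $k$} because near-minimizers of $P_k$ ``can be kept inside a fixed bounded region'' is not justified. Since the sets $U_k$ on which $f_k$ vanishes exhaust $M$, the penalization only confines mass to $U_k$, which grows with $k$; a near-minimizer of $P_k$ may well sit far from any fixed bounded region, so the constant in your surgery estimate is not obviously $k$-independent. This does not create a gap in your overall argument, because your declared fallback (continuity of each $I_k$ separately, plus continuity of $I$ and Dini) is exactly the paper's proof, and for the continuity of a single $I_k$ the added ball can simply be placed inside $U_k$, where $f_k\equiv 0$, so no uniform control of the potential is needed. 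A smaller imprecision: ``$I_k(V)\le I(V)$ for all sufficiently large $k$'' cannot hold as stated (the threshold in $k$ depends on the chosen competitor, and since $I_k\ge I$ it would force equality); the correct conclusion $\lim_k I_k(V)=I(V)$ follows by taking, for each $\varepsilon>0$, a bounded smooth competitor with $P(\Omega)\le I(V)+\varepsilon$ and then letting $\varepsilon\to 0$, which is what the paper does.
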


\begin{proof}
For any $k$ and $V>0$ the existence of a minimizer $\Omega$ for $P_k$ at volume $V$ follows by means of the direct method. Indeed a minimizing sequence $\{\Omega_i\}_{i\in\mathbb N}$ for $P_k$ at volume $V=\vol(\Omega_i)$ converges up to subsequence to some set $\Omega$ in $L^1_{\rm loc}(M)$. The functional $P_k$ is lower semicontinuous with respect to $L^1_{\mathrm{loc}}$-convergence and if by contradiction $\vol(\Omega)<V$, since $f_k$ diverges at infinity, we would get that $\int_{\Omega_i}f_k \to +\infty$ as $i\to+\infty$.

For any $k$ and $V>0$ denote by $\Omega^k_V$ a minimizer of $P_k$ at volume $V$. First observe that $I_k$ is locally bounded; indeed for any $V>0$ the value of $I_k$ at volumes $v$ in a neighborhood of $V$ is estimated from above by the value of $P_k$ on balls of center $o$ and given volume $v$.

Let us now prove that $I_k$ is continuous. Indeed, fix $V>0$ and let $V_i\to V$ be any sequence. Since $I_k(V_i) = P_k(\Omega_{V_i}^k)$, and thus the perimeters of $\Omega_{V_i}^k$ are uniformly bounded in $i$, we get that, up to subsequence, $\{\Omega_{V_i}^k\}_{i\in\mathbb N}$ converges in $L^1_{\rm loc}$ to some set $\Omega$. As $\{P_k(\Omega_{V_i}^k)\}_{i\in\mathbb N}$ is uniformly bounded, we deduce that $\vol(\Omega)=V$ and then the convergence $\Omega_{V_i}^k\to_i \Omega$ holds in $L^1$ on $M$.
Moreover
\[
I_k(V) \le P_k(\Omega) \le \liminf_i P_k(\Omega_{V_i}^k) = \liminf_i I_k(V_i).
\]
Hence $I_k$ is lower semicontinuous. Let us now finish the proof of the continuity. Indeed,
suppose by contradiction that $I_k(V) \le \limsup_i I_k(V_i) - \delta$ for some $\delta>0$. There exists a set $E$ of volume $V$ with $P_k(E)\le I_k(V) + \delta/4$, and then for large $i$ we can find a ball $B_{r_i}(x_i)$ such that $r_i\to0^+$ and either $E \cup B_{r_i}(x_i)$ or $E \setminus B_{r_i}(x_i)$ has volume $V_i$, and, denoting by $E_i$ such a set of volume $V_i$, we have
\begin{equation}\label{eq:ContinuityIk}
P_k(E_i)  \le I_k(V) + \delta/2,
\end{equation}
for large $i$. For the last inequality to hold we are using that, as a consequence of the Bishop--Gromov comparison, $P(E\cup B_{r_i}(x_i))\leq P(E)+v(n,K,r_i)$, together with the fact that $r_i\to 0$. Finally, $P_k(E_i) \ge I_k(V_i)$, thus inserting the absurd hypothesis in \eqref{eq:ContinuityIk} and passing to the limsup in $i$ we derive a contradiction.

Therefore $\lim_i I_k(V_i) = I_k(V)$, and the arbitrariness of the sequence implies that $I_k$ is continuous at $V$, and then $I_k$ is continuous on $(0,+\infty)$.

Since $(M^n,g)$ is noncollapsed with Ricci bounded below, the isoperimetric profile $I$ is continuous and strictly positive by \cite[Corollary 2]{FloresNardulli20} and \cite[Remark 4.7]{AFP21}. Since $I_k$ is continuous and $I\le I_{k+1}\le I_k$ for any $k\in \N$, if $I_k\to_k I$ pointwise, then the same convergence holds locally uniformly by Dini's Monotone Convergence Theorem. Now for fixed $V>0$ we show, indeed, that $I_k(V)\to_k I(V)$. If by contradiction $\limsup_k I_k(V) \ge \delta + I(V)$ for some $\delta>0$, then one can consider a bounded smooth set $\Omega$ of volume $V$ such that $P(\Omega) \le I(V) + \delta/2$. Since $\Omega \Subset U_k$ for large $k$, we get that
\[
I(V) + \frac\delta2 \ge P(\Omega) = \limsup_k P_k(\Omega) \ge \delta + I(V),
\]
that is impossible.
\end{proof}

\subsection{First and second variations of the penalized perimeter}

In this section we derive first and second variations for the potential term in $P_k$, and we collect basic properties of the sets that minimize the penalized perimeter $P_k$ under a volume constraint.

\begin{lemma}\label{lem:VariazioniPotenziale}
Let $M$ be a complete Riemannian manifold. Let $\Omega \subset M$ be a set of finite perimeter and suppose that there is a smooth hypersurface $\Sigma$ with boundary contained in the essential boundary $\partial^*\Omega$. Let $X:\Sigma \to TM$ be a smooth vector field compactly supported in the interior of $\Sigma$ and denote $\phi_t(x)\eqdef\exp(tX(x))$ for $x \in \Sigma$.

For some $\tau>0$, if $\Omega_t$ is the set of finite perimeter with essential boundary $\phi_t(\partial^*\Omega)$ for $|t|<\tau$ and $X_t\eqdef \partial_t \phi_t$ is the variation field along $\phi_t(\Sigma)$, then
\begin{equation}\label{eq:FirstVariationPotenziale}
    \frac{\d}{\d t} \left( \int_{\Omega_t} f_k \de \vol \right) = \int_{\partial^* \Omega_t} f_k \scal{ X_t , - \nu_{\Omega_t} } \de \mathcal{H}^{n-1}
    \qquad \forall\,|t|<\tau,
\end{equation}
where $\nu_{\Omega_t}$ is the generalized interior unit normal of $\Omega_t$. If also $X$ is normal along $\Sigma$, then
\begin{equation}\label{eq:SecondVariationPotenziale}
\begin{split}
        \frac{\d^2}{\d t^2} \left( \int_{\Omega_t} f_k \de \vol \right)\bigg|_{t=0} = & \int_\Sigma \left(f_k H_\Omega \scal{X,\nu_\Omega}^2 - \scal{\nabla f_k , X}\scal{X, \nu_\Omega}\right)  \de \mathcal{H}^{n-1},
\end{split}
\end{equation}
where $\nu_{\Omega}$ is the generalized interior unit normal of $\Omega$ and $H_\Omega$ denotes the mean curvature in direction $\nu_\Omega$, i.e., $H_\Omega\nu_\Omega$ is the mean curvature vector of $\Sigma$.
\end{lemma}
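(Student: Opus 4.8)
The plan is to compute the first and second variations of the potential term $\Omega \mapsto \int_\Omega f_k \de \vol$ directly, treating it separately from the perimeter term, since the latter is classical. Write $\mathcal{F}_k(t) \eqdef \int_{\Omega_t} f_k \de \vol$. The first step is to express $\mathcal{F}_k(t)$ as an integral over a fixed domain by pulling back along the flow. Since only the portion of $\partial^*\Omega$ equal to $\Sigma$ is being deformed, and $\phi_t$ is the identity outside a compact subset of the interior of $\Sigma$, the symmetric difference $\Omega_t \Delta \Omega$ is contained in a fixed compact tubular neighborhood of $\supp X$ for $|t|<\tau$. I would foliate this neighborhood by the normal exponential map $\Psi(x,s) = \exp_x(s \nu_\Omega(x))$, $x \in \Sigma$, $s \in (-\delta,\delta)$, with Jacobian $J(x,s)$ satisfying $J(x,0) \equiv 1$ and $\partial_s J(x,0) = -H_\Omega(x)$ (the mean curvature in the chosen normal direction, with the sign convention fixed by $\nu_\Omega$). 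In these coordinates, if $u(x) \eqdef \scal{X(x),\nu_\Omega(x)}$ denotes the normal component of the variation field, then up to higher-order corrections $\Omega_t$ near $\Sigma$ is $\{(x,s) : s < t u(x) + O(t^2)\}$, and $\mathcal{F}_k(t) = \mathcal{F}_k(0) + \int_\Sigma \left( \int_0^{t u(x) + O(t^2)} f_k(\Psi(x,s)) J(x,s)\, \de s \right) \de \mathcal{H}^{n-1}(x)$.

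The second step is differentiation under the integral sign. For the first variation, the fundamental theorem of calculus gives $\mathcal{F}_k'(t) = \int_{\partial^*\Omega_t} f_k \scal{X_t, -\nu_{\Omega_t}} \de \mathcal{H}^{n-1}$: this is just the transport/Reynolds identity for the measure $f_k \vol$, and in fact holds for the genuine (not merely normal) variation field $X_t$ since tangential components do not move the boundary to first order in a way that changes the enclosed $f_k$-mass — more robustly, one can prove \eqref{eq:FirstVariationPotenziale} by noting $\mathcal{F}_k(t) = \int_M f_k \chi_{\Omega_t} \de \vol$ and using the area/coarea structure, or by the divergence theorem applied to the flow $\phi_t$ extended to $M$. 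For the second variation, differentiate once more at $t=0$: writing $a(x) \eqdef t u(x) + \tfrac{t^2}{2} w(x) + o(t^2)$ for the upper limit, where $w$ is the normal component of the acceleration $\partial_t^2 \phi_t|_{t=0}$, Taylor-expanding the inner integral $\int_0^{a(x)} f_k(\Psi(x,s)) J(x,s)\,\de s$ to second order in $t$ yields $t u(x) f_k(x) + \tfrac{t^2}{2}\left( u(x)^2 \partial_s\big(f_k(\Psi(x,s))J(x,s)\big)\big|_{s=0} + w(x) f_k(x) \right) + o(t^2)$. Now $\partial_s\big(f_k(\Psi(x,s))J(x,s)\big)\big|_{s=0} = \scal{\nabla f_k, \nu_\Omega} - f_k H_\Omega$ using $\partial_s J(x,0) = -H_\Omega$ and that $\partial_s \Psi(x,0) = \nu_\Omega$. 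Since $X$ is assumed normal along $\Sigma$, we have $u = \scal{X,\nu_\Omega}$, $\scal{\nabla f_k, X} = u \scal{\nabla f_k,\nu_\Omega}$, and the assumption that $X$ is the time-zero velocity of a geodesic variation ($\phi_t(x) = \exp(tX(x))$) forces the acceleration term $w$ to vanish, since $t \mapsto \phi_t(x)$ is a geodesic and hence $\partial_t^2\phi_t \equiv 0$. Collecting terms: $\mathcal{F}_k''(0) = \int_\Sigma \left( -u^2 \scal{\nabla f_k,\nu_\Omega} \cdot(-1) \cdot \ldots \right)$ — carefully tracking signs, the $-f_k H_\Omega u^2$ piece comes with the opposite sign from the coordinate orientation because $\nu_\Omega$ is the \emph{interior} normal so the region lies on the side $s<a(x)$, giving finally $\mathcal{F}_k''(0) = \int_\Sigma \left( f_k H_\Omega \scal{X,\nu_\Omega}^2 - \scal{\nabla f_k, X}\scal{X,\nu_\Omega} \right) \de \mathcal{H}^{n-1}$, which is \eqref{eq:SecondVariationPotenziale}.

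The main obstacle I expect is bookkeeping the sign and orientation conventions correctly — in particular making sure the direction of $\nu_\Omega$ (interior versus exterior), the sign of $H_\Omega$ relative to that choice, and the side of $\Sigma$ on which $\Omega$ sits all conspire to produce exactly the signs in \eqref{eq:FirstVariationPotenziale} and \eqref{eq:SecondVariationPotenziale}; a secondary technical point is justifying the differentiation under the integral and the Taylor expansion of $J$ and of the moving upper limit $a(x)$ rigorously, which is routine given the smoothness and compact support of $X$ on the smooth piece $\Sigma$ and the smoothness of $f_k$, the normal exponential map being a diffeomorphism onto a tubular neighborhood for $\delta$ small. No delicate measure-theoretic issue arises because all the action is confined to the region where $\partial^*\Omega$ coincides with the smooth hypersurface $\Sigma$.
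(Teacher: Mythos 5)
Your computation is essentially correct and lands on the right formulas, but for the second variation you take a genuinely different route from the paper. The paper proves \eqref{eq:FirstVariationPotenziale} by the area formula (as in Maggi's Proposition 17.8) and then gets \eqref{eq:SecondVariationPotenziale} by differentiating that identity after pulling it back to the fixed surface $\Sigma$ via the embeddings $F_t$: the terms $f_k\scal{\nabla_{X_t}X_t,\nu_{\Omega_t}}$ and $f_k\scal{X_t,\nabla_{X_t}\nu_{\Omega_t}}$ vanish because $t\mapsto\phi_t(x)$ is a geodesic and $\nabla_{X_t}\nu_{\Omega_t}$ is tangential, and the remaining term is computed from the first variation of the area element, $\partial_t\de\vol(t)|_{t=0}=-H_\Omega\scal{X,\nu_\Omega}\de\mathcal{H}^{n-1}$ for normal $X$. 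You instead expand the enclosed $f_k$-mass in Fermi coordinates $\Psi(x,s)=\exp_x(s\nu_\Omega(x))$ and Taylor-expand $\int_0^{a(x,t)}f_k(\Psi(x,s))J(x,s)\de s$ using $\partial_s\big(f_kJ\big)\big|_{s=0}=\scal{\nabla f_k,\nu_\Omega}-f_kH_\Omega$; this is equally legitimate and somewhat more elementary (no differentiation of the moving normal is needed), and it has the nice feature that for normal $X$ one has $\phi_t(x)=\Psi\big(x,t\scal{X,\nu_\Omega}(x)\big)$ \emph{exactly}, so the graph representation carries no $O(t^2)$ correction at all --- this is the clean justification, rather than your claim that $\partial_t^2\phi_t\equiv0$, which holds covariantly (or in these adapted coordinates) but not in an arbitrary chart. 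The one point to repair in a careful writeup is the orientation bookkeeping, which as stated is internally inconsistent: if $s$ is measured along the \emph{interior} normal $\nu_\Omega$, so that $\partial_sJ(x,0)=-H_\Omega$, then $\Omega$ lies locally on the side $\{s>0\}$ and $\Omega_t=\{s>t\scal{X,\nu_\Omega}\}$, whence $\mathcal{F}_k(t)=\mathcal{F}_k(0)-\int_\Sigma\int_0^{t\scal{X,\nu_\Omega}}f_kJ\de s\de\mathcal{H}^{n-1}$; alternatively one foliates along $-\nu_\Omega$, in which case the region is $\{s<a\}$ but $\partial_sJ(x,0)=+H_\Omega$. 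Either consistent choice yields \eqref{eq:FirstVariationPotenziale} and \eqref{eq:SecondVariationPotenziale} directly, whereas the mixed convention in your sketch, taken literally, produces the negative of both formulas, which you then fix by an ad hoc sign flip; this should be written out once consistently. For the first variation at general $t$ and general (not necessarily normal) $X$, your transport/area-formula argument is essentially the paper's.
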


\begin{proof}
The first variation \eqref{eq:FirstVariationPotenziale} follows by an application of the area formula as in \cite[Proposition 17.8]{MaggiBook}.
The second variation \eqref{eq:SecondVariationPotenziale} follows by differentiating \eqref{eq:FirstVariationPotenziale}. Indeed, rewriting the varied boundary $\partial^*\Omega_t$ as an embedding $F_t:\Sigma\to M$, where $F_t(\Sigma)= \phi_t(\Sigma)$, we have
\[
\frac{\d}{\d t} \left( \int_{\Omega_t} f_k \de \vol \right) = \int_{\Sigma} [f_k \scal{ X_t , - \nu_{\Omega_t} }]\circ F_t \de \vol(t),
\]
where $\vol(t)$ is the volume form on $\Sigma$ induced by $F_t$. Hence
\begin{equation*}
\begin{split}
    \frac{\d^2}{\d t^2} \left( \int_{\Omega_t} f_k \de \vol \right)= & \int_\Sigma - \scal{\nabla f_k , X_t}\scal{X_t, \nu_{\Omega_t}} \circ F_t - f_k \scal{ \nabla_{X_t} X_t, \nu_{\Omega_t}} \circ F_t +\\ 
    &- f_k \scal{X_t, \nabla_{X_t} \nu_{\Omega_t}} \de \vol(t)  + \int_\Sigma f_k\scal{X_t,-\nu_{\Omega_t}} \circ F_t \, \partial_t \de\vol(t) .
\end{split}
\end{equation*}
Since $\nabla_{X_t}X_t=0$ and $\nabla_{X_t}\nu_{\Omega_t}$ is tangent along $F_t(\Sigma)$, evaluating at $t=0$ we get
\begin{equation}\label{eq:SecVar1}
\begin{split}
    \frac{\d^2}{\d t^2} \left( \int_{\Omega_t} f_k \de \vol \right)\bigg|_{t=0}= & \int_\Sigma - \scal{\nabla f_k , X}\scal{X, \nu_{\Omega}}\de\mathcal{H}^{n-1}   + \int_\Sigma f_k\scal{X,-\nu_{\Omega}} \, \partial_t \de\vol(t)|_{t=0} .
\end{split}
\end{equation}
Since $X$ is normal along $\Sigma$, i.e., its tangent projection $X^\top$ onto $T\Sigma$ vanishes, the identity \eqref{eq:SecondVariationPotenziale} follows by the fact that
\begin{equation}\label{eq:SecVar2}
\partial_t \de\vol(t)|_{t=0} = \left(\div_{\Sigma} ( X^\top) - H_\Omega \scal{X, \nu_\Omega}\right)\de\mathcal{H}^{n-1}\llcorner\Sigma =   - H_\Omega \scal{X, \nu_\Omega}\de\mathcal{H}^{n-1}\llcorner\Sigma .
\end{equation}
\end{proof}

In the hypotheses and notation of \cref{lem:VariazioniPotenziale}, if $\varphi$ is a smooth function compactly supported in $\Sigma\subset \partial^*\Omega$, we can plug in the normal field $X= -\varphi\nu_\Omega$, so that \eqref{eq:SecondVariationPotenziale} reads
\begin{equation}\label{eq:SecondVariationPotenzialeSpecial}
        \frac{\d^2}{\d t^2} \left( \int_{\Omega_t} f_k \de \vol \right)\bigg|_{t=0} =  \int_{\partial^*\Omega} \left( \varphi^2 f_k H_\Omega - \varphi^2 \scal{\nabla f_k , \nu_\Omega}\right) \de \mathcal{H}^{n-1}.
\end{equation}

\begin{lemma}\label{lem:MinimizingPk}
Suppose that $(M^n,g)$ is noncollapsed and that $\ric \ge (n-1)K$ for $K \in \R$.
Let $\Omega$ be a minimizer of $P_k$, see \eqref{eq:DefPk}, at volume $V$. Then $\Omega$ is bounded and there exists a constant $\lambda=\lambda_{k,V}\in\R$ such that
\[
\frac{\d}{\d t} P_k(\phi_t(\Omega))\bigg|_{t=0} = \lambda \int_{\partial^*\Omega} \scal{X, - \nu_\Omega} \de \mathcal{H}^{n-1},
\]
for any smooth compactly supported vector field $X$ on $M$, where $\phi_t$ is the flow of $X$ and $\nu_\Omega$ the generalized interior normal of $\Omega$.

In particular $\Omega$ has generalized mean curvature $H_\Omega:\partial^*\Omega\to \R$ such that
\begin{equation}\label{eq:Lambda}
    H_\Omega + f_k = \lambda
    \qquad
    \text{$\mathcal{H}^{n-1}$-a.e. on $\partial^*\Omega$,}
\end{equation}
that is
\[
\int_{\partial^*\Omega} (H_\Omega + f_k) \scal{X, - \nu_\Omega} \de \mathcal{H}^{n-1}
= \lambda \int_{\partial^*\Omega} \scal{X, - \nu_\Omega} \de \mathcal{H}^{n-1},
\]
for any smooth vector field $X$ on $M$.

Moreover there is an open representative of $\Omega$ and its topological boundary $\partial\Omega$ is given by the disjoint union $\partial\Omega = \partial_r\Omega \sqcup \partial_s\Omega$, where $\partial_r\Omega$ is a smooth hypersurface with topological boundary $\partial_s\Omega$ that is empty if the dimension is $n \leq 7$ and with Hausdorff dimension less than or equal to $n-8$ when $n \geq 8$.
\end{lemma}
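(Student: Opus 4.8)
The plan is to treat this as a standard almost-minimality argument for the penalized perimeter functional, combined with the classical regularity theory for perimeter minimizers and the first-variation formula for the potential term established in \cref{lem:VariazioniPotenziale}.

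First I would establish boundedness of $\Omega$. This is the key point where the penalization $f_k$ enters decisively. Since $f_k \to +\infty$ at infinity, a minimizer of $P_k$ at volume $V$ cannot have mass escaping: suppose $\vol(\Omega \setminus B_R(o)) = m(R) > 0$ for all $R$. One compares $\Omega$ with the competitor obtained by cutting the far-away part of $\Omega$ and restoring the lost volume near $o$ (say by enlarging a ball around $o$), as in the proof of continuity of $I_k$ in \cref{lem:ApprossimazioneProfili}. Removing the piece of $\Omega$ outside $B_R(o)$ changes the perimeter by a controlled amount (roughly $P(\Omega; \partial B_R) \le \frac{\d}{\d R}\vol(\Omega\setminus B_R)$ for a.e.\ $R$, which is integrable, hence small along a sequence $R_j\to\infty$), and gains $\int_{\Omega\setminus B_{R}} f_k \de\vol \ge N \cdot m(R)$ where $N\to\infty$ as $R\to\infty$ by the growth assumption on $f_k$; restoring the volume near $o$ costs only a bounded amount of perimeter and a bounded amount of potential (since $f_k$ is bounded on compact sets). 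For $R$ large this strictly decreases $P_k$, a contradiction. Hence $\vol(\Omega \setminus B_R(o)) = 0$ for $R$ large, and one can upgrade this to actual boundedness of an open representative using the density estimates coming from the $\Lambda$-minimality proved below (or by a direct truncation argument). Alternatively one can cite \cref{cor:IsopBounded}-type arguments, but here the potential term makes it cleaner.

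Next I would show $\Omega$ is a volume-constrained almost minimizer, hence a $\Lambda$-minimizer of the perimeter. Indeed, $f_k$ is smooth and, by boundedness of $\Omega$, bounded on a neighbourhood of $\overline\Omega$; so $\int_{\cdot} f_k \de\vol$ is a bounded perturbation with Lipschitz-type control under $L^1$ variations: $|\int_{\Omega'} f_k - \int_\Omega f_k| \le \|f_k\|_{L^\infty}\vol(\Omega\Delta\Omega')$. Combining this with the volume-constrained minimality of $P(\cdot)$ (restoring volume by a small ball perturbation, whose perimeter cost is $\le C\,\vol(\Omega\Delta\Omega')^{(n-1)/n}$ by the isoperimetric inequality, absorbed into the $\Lambda r$-term for $\Lambda$ large using $\vol \le C r^n$ on small balls) yields that $\Omega$ is a $(\Lambda, r_0)$-perimeter minimizer in $M$. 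The standard regularity theory for such minimizers (e.g.\ \cite{Tama82, GonzalezMassariTamanini, morgan2003regularity}) then gives the structure of $\partial\Omega$: an open representative with $\partial\Omega = \partial_r\Omega \sqcup \partial_s\Omega$, $\partial_r\Omega$ a smooth (indeed $C^{1,\alpha}$, and $C^\infty$ away from where the Euler--Lagrange equation could fail) embedded hypersurface, $\partial_s\Omega$ empty for $n\le 7$ and of Hausdorff dimension $\le n-8$ for $n\ge 8$.

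Finally I would derive the Euler--Lagrange equation. For a compactly supported smooth vector field $X$ with flow $\phi_t$, the constraint $\vol(\phi_t(\Omega))=V$ is not automatically preserved, so one uses the classical Lagrange-multiplier scheme: pick an auxiliary field $Y$ with $\frac{\d}{\d t}\vol(\phi^Y_t(\Omega))|_{t=0} = \int_{\partial^*\Omega}\langle Y, -\nu_\Omega\rangle \de\mathcal H^{n-1} \ne 0$, and for each $X$ combine the flows of $X$ and $sY$ so that volume is preserved to first order; minimality of $P_k$ then forces $\frac{\d}{\d t}P_k(\phi_t(\Omega))|_{t=0} = \lambda \int_{\partial^*\Omega}\langle X, -\nu_\Omega\rangle \de\mathcal H^{n-1}$ for the single constant $\lambda = \lambda_{k,V}$ determined by testing with $Y$. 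Now $\frac{\d}{\d t}P(\phi_t(\Omega))|_{t=0} = \int_{\partial^*\Omega} H_\Omega \langle X, -\nu_\Omega\rangle \de\mathcal H^{n-1}$ by the first variation of perimeter (valid in the weak/distributional sense on $\partial^*\Omega$ with $H_\Omega$ the generalized mean curvature, since $\partial_s\Omega$ has codimension $\ge 7$ and is negligible), while $\frac{\d}{\d t}\int_{\phi_t(\Omega)} f_k \de\vol|_{t=0} = \int_{\partial^*\Omega} f_k \langle X, -\nu_\Omega\rangle \de\mathcal H^{n-1}$ is exactly \eqref{eq:FirstVariationPotenziale} from \cref{lem:VariazioniPotenziale}. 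Adding these and comparing with the multiplier identity gives $\int_{\partial^*\Omega}(H_\Omega + f_k - \lambda)\langle X,-\nu_\Omega\rangle\de\mathcal H^{n-1} = 0$ for all $X$, which by the fundamental lemma of the calculus of variations yields \eqref{eq:Lambda}, namely $H_\Omega + f_k = \lambda$ $\mathcal H^{n-1}$-a.e.\ on $\partial^*\Omega$; elliptic regularity for this equation then also justifies the smoothness of $\partial_r\Omega$.

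The main obstacle is the boundedness step: all the regularity and first-variation machinery is classical, but one must genuinely use the coercivity provided by the diverging potential $f_k$ together with a careful perimeter bookkeeping (controlling the slicing term $P(\Omega;\partial B_R)$ for a.e.\ $R$ and the cost of restoring the excised volume) to rule out mass at infinity. Everything downstream — $\Lambda$-minimality, the structure theorem, and the Euler--Lagrange equation via Lagrange multipliers — is then routine.
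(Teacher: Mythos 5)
Your treatment of the Lagrange multiplier, the Euler--Lagrange identity $H_\Omega+f_k=\lambda$, and the regularity of $\partial\Omega$ (via volume-constrained almost/$\Lambda$-minimality plus the classical regularity theory, with smoothness of $\partial_r\Omega$ upgraded through the equation) is sound and essentially the route the paper takes, except that the paper quotes the regularity statement directly rather than re-deriving $\Lambda$-minimality. Note also that none of this actually requires boundedness first, since the comparisons are local; the paper indeed proves boundedness \emph{last}, using a regular boundary point.

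The genuine gap is in your boundedness step. Your one-shot cut-and-restore comparison cannot produce a contradiction. Writing $V(R)=\vol(\Omega\setminus B_R(o))$, the total gain from deleting the tail is $\int_{\Omega\setminus B_R}f_k\de\vol + P(\Omega,M\setminus \overline{B}_R)$, and \emph{both} terms tend to $0$ as $R\to\infty$ (the first by dominated convergence, since $\int_\Omega f_k\le P_k(\Omega)<\infty$); the bound $\int_{\Omega\setminus B_R}f_k\ge N(R)V(R)$ with $N(R)\to\infty$ does not help because $V(R)\to0$. Meanwhile the cost of restoring the volume near $o$ is, as you yourself say, only \emph{bounded} (enlarging a ball costs a fixed amount of perimeter), so for large $R$ the competitor is worse, not better. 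Even if you replace the restore by one whose perimeter cost is of order $V(R)^{(n-1)/n}$ (small ball) or $\beta V(R)$ (local normal variation at a regular boundary point, as in \eqref{eq:LocalModificationsSets}), the resulting inequality only says, roughly, $|V'(R)|\ge (N(R)-C)V(R)$ for the bad radii, which is perfectly compatible with a strictly positive, super-exponentially decaying tail such as $V(R)=e^{-e^R}$; it never forces $V(R)=0$ for finite $R$.

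What is missing is the nonlinear isoperimetric ingredient and the ODE argument. The paper first builds, at a point $p_0\in\partial_r\Omega$ (hence after regularity), volume-fixing local modifications with perimeter cost \emph{linear} in the volume adjustment, and uses the monotonicity of $f_k$ at infinity ($\inf_{M\setminus B_r}f_k>\sup_{B_{R_1}}f_k$) to cancel the potential terms; this yields the differential inequality $A(r)\le |V'(r)|+CV(r)$ of \eqref{eq:GoalBddIsopRegion}, where $A(r)=P(\Omega,M\setminus B_r(p_0))$. Combined with the isoperimetric inequality for small volumes on noncollapsed manifolds with Ricci bounded below (Hebey), $c_0V(r)^{\frac{n-1}{n}}\le |V'(r)|+A(r)$, one gets $\tfrac{c_0}{2}V^{\frac{n-1}{n}}\le -2V'$ for large $r$, i.e. $(V^{1/n})'\le -c<0$, and ODE comparison forces $V$ to vanish at a finite radius. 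Without this small-volume isoperimetric inequality and the resulting ODE comparison, your argument does not close, so the boundedness claim in \cref{lem:MinimizingPk} remains unproven in your proposal.
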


\begin{proof}
The first variation $\partial_t P_k(\phi_t(\Omega))|_{t=0}$ vanishes whenever $X$ generates a volume preserving variation, i.e., whenever $\int_{\partial^*\Omega} \scal{X,\nu_\Omega} =0$. Arguing for example as in \cite[Theorem 17.20]{MaggiBook} it is a classical matter to derive the existence of the Lagrange multiplier $\lambda$.

Being $\Omega, X, \phi_t$ are as in the statement, then from \eqref{eq:FirstVariationPotenziale} we deduce that
\[
\frac{\d}{\d t} \left( \int_{\Omega_t} f_k \de \vol \right) \bigg|_{t=0} = \int_{\partial^* \Omega} f_k \scal{ X , - \nu_{\Omega} } \de \mathcal{H}^{n-1},
\]
where $\Omega_t\eqdef\phi_t(\Omega)$. The first variation formula for the perimeter then yields that
\[
\frac{\d}{\d t} P_k(\phi_t(\Omega))\bigg|_{t=0} = \int_{\partial^*\Omega} \left(\div X - \scal{\nabla X (\nu_\Omega), \nu_\Omega } - f_k \scal{X,\nu_\Omega}\right)\de\mathcal{H}^{n-1}.
\]
Letting $\div_{\Omega} X \eqdef \div X - \scal{\nabla X (\nu_\Omega), \nu_\Omega }$ the tangential divergence of $X$ along $\partial^*\Omega$, we deduce that
\[
\int_{\partial^*\Omega} \div_{\Omega} X \de\mathcal{H}^{n-1}
= \int_{\partial^*\Omega} (f_k - \lambda )\scal{X,\nu_\Omega}\de\mathcal{H}^{n-1},
\]
which precisely defines the generalized mean curvature of $\Omega$ via the identity $H_\Omega\nu_\Omega = - (f_k-\lambda) \nu_\Omega$.

The regularity of $\partial \Omega$ in the statement follows by \cite[Corollary 1.6 \& Remark 1.7]{AntonelliPasqualettoPozzetta21}. These results imply that $\Omega$ has an open representative with topological boundary $\partial\Omega$ given by the disjoint union $\partial\Omega = \partial_r\Omega \sqcup \partial_s\Omega$, where $\partial_r\Omega$ is locally $C^{1,\alpha}$ and it has topological boundary $\partial_s\Omega$ with Hausdorff dimension less than or equal to $n-8$. Since $f_k$ is smooth, the Euler--Lagrange equation \eqref{eq:Lambda} classically implies that the regular part $\partial_r\Omega$ is, in fact, smooth.

Now we can prove that $\Omega$ is bounded by a modification of a classical argument appeared in \cite[Proposition 3.7]{RitRosales04} in the Euclidean setting and in \cite[Theorem 3]{Nar14} on Riemannian manifolds (see also \cite[Appendix B]{AFP21}). Fix $p_0 \in \partial_r \Omega$. Since $\partial_s\Omega$ is closed, there is $R_1>0$ such that $\partial\Omega \cap B_{R_1}(p_0)$ is a nonempty smooth hypersurface, and it is well defined the inner normal $\nu_\Omega$ of $\Omega$ on $\partial\Omega \cap B_{R_1}(p_0)$. Let $\varphi \in C^\infty_c(\partial\Omega \cap B_{R_1}(p_0))$ be a nonvanishing nonnegative function and consider the normal vector field $X=-\varphi \nu_\Omega$. Let $\Phi_t(x)\eqdef\exp(tX(x))$ and let $\Omega_t$ be the varied set whose essential boundary is $\Phi_t(\partial^*\Omega)$, for $|t|<\tau$ and $\tau>0$. Since $\alpha\eqdef \int_{\partial\Omega} \scal{X,-\nu_\Omega} >0$, first variation formulae for perimeter and volume give that
\begin{equation}\label{eq:Variazioni1}
    \begin{split}
        \vol(\Omega_t) &= \vol(\Omega) + \alpha t + O(t^2),\\
    P(\Omega_t,B_{R_1}(p_0)) &= P(\Omega,B_{R_1}(p_0)) + t \int_{\partial\Omega} \varphi H_\Omega\de\mathcal{H}^{n-1}  + O(t^2),
    \end{split}
\end{equation}
for $|t|<\tau$. Moreover, as $\varphi>0$, we have that
\begin{equation}\label{eq:Contenimento}
    \Omega \subset \Omega_t \qquad \text{for $0\le t<\tau$,}
\end{equation}
up to decreasing $\tau$. As $\alpha>0$ and $|\int \varphi H_\Omega|<+\infty$, \eqref{eq:Variazioni1} and \eqref{eq:Contenimento} imply that there exists $\eps_0>0$ and $\beta>0$ such that for any $\eps\in (-\eps_0,\eps_0)$ there is a finite perimeter set $E$ with
\begin{equation}\label{eq:LocalModificationsSets}
\begin{split}
    &E\Delta \Omega \Subset B_{R_1}(p_0),\\
    &    \vol(E)=\vol(\Omega)+\eps,\\
    &P(E,B_{R_1}(p_0)) \le P(\Omega,B_{R_1}(p_0))+ \beta|\eps|, \\
    & \eps>0 \text{ implies } \Omega \subset E.
\end{split}
\end{equation}
Now for $r>R_1$ let
\[
V(r)\eqdef \vol(\Omega \setminus B_r(p_0)), 
\qquad
A(r)\eqdef P(\Omega, M\setminus B_r(p_0)).
\]
Since $M$ is noncollapsed and the Ricci curvature is bounded from below, it holds an isoperimetric inequality for sufficiently small volumes, see \cite[Lemma 3.2]{Heb00}. Hence for some $R_2>R_1$, for $r\ge R_2>0$ we can apply such an isoperimetric inequality on $\Omega \setminus B_r(p_0)$, so that
\[
| V'(r) | + A(r) = \mathcal{H}^{n-1}(\partial B_r(p_0) \cap \Omega) + P(\Omega, M\setminus B_r(p_0)) = P(\Omega\setminus B_r(p_0)) \ge c_0 V(r)^{\frac{n-1}{n}},
\]
for some $c_0>0$ for almost every $r\ge R_2$.
We want to prove that
\begin{equation}\label{eq:GoalBddIsopRegion}
    A(r) \le | V'(r) |  + C V(r) ,
\end{equation}
for some constant $C$, and for almost every $r$ sufficiently big. Combining  with the previous inequality, in this way we would get
\[
c_0 V(r)^{\frac{n-1}{n}} \le  C V(r)  + 2| V'(r) | \le \frac{c_0}{2}V(r)^{\frac{n-1}{n}} - 2 V'(r),
\]
because $| V'(r) | = - V'(r) $ and $ C V(r)\le \tfrac{c_0}{2}V(r)^{\frac{n-1}{n}} $ for almost every sufficiently big radius. Hence ODE comparison implies that $V(r)$ vanishes at some $r=\overline{r}<+\infty$, i.e., $\Omega$ is bounded as a set of finite perimeter.

So we are left to prove \eqref{eq:GoalBddIsopRegion}. For some $R_3\ge R_2$, for $r\ge R_3$ we can assume that $V(r)<\eps_0$. For fixed $r\ge R_3$ let $\eps\eqdef V(r)$ and assume also that  $\mathcal{H}^{n-1}(\partial\Omega \cap \partial B_r(p_0))=0$ and $V'(r)$ exists, which hold for almost every $r$. Given such $\eps$, there exists $E$ as in \eqref{eq:LocalModificationsSets}. Finally let $F= E \cap B_r(p_0)$. It follows that $\vol(F) = \vol(\Omega)$ and \eqref{eq:LocalModificationsSets} implies that
\begin{equation}\label{eq:VolumeF}
    \Omega \cap B_r(p_0) \subset F,
    \qquad
    F\setminus\Omega \subset B_{R_1}(p_0),
    \qquad
    \vol(F\setminus \Omega) = \eps.
\end{equation}
Since $\Omega$ is a minimizer for $P_k$ at volume $\vol(\Omega)=\vol(F)$, we deduce
\begin{equation}\label{eq:StimaBoundedness}
\begin{split}
P_k(\Omega)
& \le P_k(F) = P(E) - P(\Omega,M\setminus B_r(p_0)) + \mathcal{H}^{n-1}(\partial B_r(p_0) \cap \Omega) + \int_F f_k\de\vol \\
&\le P(\Omega) + \beta\eps - A(r) + |V'(r)| + \int_F f_k\de\vol.
\end{split}    
\end{equation}
Up to increase $R_3$, we can assume that $\inf_{M\setminus B_r(p_0)} f_k > \sup_{B_{R_1}(p_0)} f_k$. By \eqref{eq:VolumeF} we then get
\[
\begin{split}
    \int_F f_k\de\vol &= \int_{F\cap \Omega} f_k\de\vol + \int_{F\setminus\Omega} f_k\de\vol \le \int_{\Omega \cap B_r(p_0)} f_k\de\vol  + \eps \sup_{B_{R_1}(p_0)} f_k \\& \le \int_{\Omega \cap B_r(p_0)} f_k\de\vol  +\int_{\Omega \setminus B_r(p_0)} f_k\de\vol  = \int_\Omega f_k \de\vol
\end{split}
\]
Combining the latter one with \eqref{eq:StimaBoundedness} we obtain \eqref{eq:GoalBddIsopRegion} with $C=\beta$.
\end{proof}

\subsection{Proof of {\cref{thm:Concavity}}}

We will need the following well known lemma that, roughly speaking, gives an approximation in $H^1$ of the function identically equal to $1$ on the boundary of a set, provided its singular part is sufficiently small. We will apply such a result to a volume constrained minimizer of $P_k$.

\begin{lemma}\label{lem:ApprossimantiVariazione}
Let $\Omega\subset M^n$ be a bounded open set such that its topological boundary $\partial\Omega$ is given by the disjoint union $\partial\Omega = \partial_r\Omega \sqcup \partial_s\Omega$, where $\partial_r\Omega$ is a smooth hypersurface with topological boundary $\partial_s\Omega$ and the Hausdorff dimension of $\partial_s\Omega$ is less than or equal to $n-3$.

Then for any $\delta>0$ there is $\varphi_\delta\in C^\infty_c(\partial_r \Omega)$ such that
\[
 0\le \varphi_\delta \le 1,
 \qquad
    \lim_{\delta\to 0} \int_{\partial\Omega} \varphi_\delta^2\de\mathcal{H}^{n-1} = P(\Omega),
    \qquad
\lim_{\delta\to 0}\int_{\partial \Omega} |\nabla\varphi_\delta|^2\de\mathcal{H}^{n-1} =0,
\]
where $\nabla\varphi_\delta$ here denotes the gradient of $\varphi_\delta$ as a function on the submanifold $\partial_r\Omega$.
\end{lemma}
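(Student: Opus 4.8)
The plan is to take $\varphi_\delta$ to be a mollified logarithmic cut‑off of the distance to the singular set. Since $\Omega$ is bounded and $M$ is complete, $\overline\Omega$ is compact, hence so are $\partial\Omega$ and its closed subset $\partial_s\Omega$; write $\varrho(x)\eqdef\dist(x,\partial_s\Omega)$. For $0<\eps<\delta$ small, set
\[
\widetilde\varphi_{\eps,\delta}\eqdef\min\Big\{1,\max\big\{0,\tfrac{\log(\varrho/\eps)}{\log(\delta/\eps)}\big\}\Big\},
\]
which is Lipschitz on $\partial_r\Omega$, takes values in $[0,1]$, vanishes on $\{\varrho\le\eps\}\supseteq\partial_s\Omega$, equals $1$ on $\{\varrho\ge\delta\}$, and, since $\varrho$ is $1$‑Lipschitz, satisfies $|\nabla\widetilde\varphi_{\eps,\delta}|\le\big(\varrho\log(\delta/\eps)\big)^{-1}$ on $\{\eps<\varrho<\delta\}\cap\partial_r\Omega$ and vanishes elsewhere on $\partial_r\Omega$, where $\nabla$ denotes the gradient along the submanifold $\partial_r\Omega$.

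The core of the argument is the Dirichlet energy estimate, which rests on the ``tube bound''
\[
\mathcal H^{n-1}\big(\partial_r\Omega\cap\{\varrho<t\}\big)\le C\,t^{2}\qquad\text{for all small }t>0 .
\]
I would prove this by covering $\partial_s\Omega$ with at most $C\,t^{-(n-3)}$ balls of radius $t$ centred on $\partial_s\Omega$, which is possible because $\dim_H\partial_s\Omega\le n-3$, so that $\{\varrho<t\}$ lies in the union of the concentric balls of radius $2t$, and then using the upper Ahlfors bound $\mathcal H^{n-1}(\partial_r\Omega\cap B_r(x))\le C\,r^{n-1}$. The latter holds in the situation we care about --- $\Omega$ a minimizer of $P_k$ --- because then $\partial_r\Omega$ has bounded generalized mean curvature by \eqref{eq:Lambda} (see \cref{lem:MinimizingPk}) and hence obeys the monotonicity formula. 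Decomposing $\{\eps<\varrho<\delta\}$ dyadically then yields $\int_{\partial_r\Omega\cap\{\eps<\varrho<\delta\}}\varrho^{-2}\de\mathcal H^{n-1}\le C\log(\delta/\eps)$, so that
\[
\int_{\partial_r\Omega}|\nabla\widetilde\varphi_{\eps,\delta}|^2\de\mathcal H^{n-1}\le\frac{C}{\log(\delta/\eps)}\longrightarrow0\quad\text{as }\eps\to0 .
\]

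To conclude, for each $\delta$ I would fix $\eps=\eps(\delta)$ so small that the above energy is $<\delta$, then replace $\widetilde\varphi_{\eps(\delta),\delta}$ by its mollification on $\partial_r\Omega$ at a scale small enough that the support stays compactly inside $\partial_r\Omega$ --- so the result $\varphi_\delta$ lies in $C^\infty_c(\partial_r\Omega)$ --- the values stay in $[0,1]$, the $L^1$‑ and $L^2$‑norms and the Dirichlet energy change by at most $\delta$, and the function remains $\equiv1$ off the $2\delta$‑tube around $\partial_s\Omega$. Then $0\le\varphi_\delta\le1$; one has $\int_{\partial\Omega}|\nabla\varphi_\delta|^2\de\mathcal H^{n-1}=\int_{\partial_r\Omega}|\nabla\varphi_\delta|^2\de\mathcal H^{n-1}\to0$; and, since $\varphi_\delta\to1$ pointwise on $\partial_r\Omega$, $0\le\varphi_\delta\le1$, $\mathcal H^{n-1}(\partial_s\Omega)=0$ (because $\dim_H\partial_s\Omega<n-1$), and $\mathcal H^{n-1}(\partial_r\Omega)=\mathcal H^{n-1}(\partial^*\Omega)=P(\Omega)$ for a set with this boundary structure, dominated convergence yields $\int_{\partial\Omega}\varphi_\delta^2\de\mathcal H^{n-1}\to\mathcal H^{n-1}(\partial_r\Omega)=P(\Omega)$. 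The step I expect to be the main obstacle is the tube bound: it quantifies how little of the hypersurface $\partial_r\Omega$ can concentrate near its own singular set, and it is exactly there that both the codimension‑at‑least‑$2$ smallness of $\partial_s\Omega$ and the (almost‑)minimality of $\partial_r\Omega$ are used; note also that a naive linear cut‑off in place of the logarithmic one would leave the Dirichlet energy merely bounded rather than infinitesimal, so the logarithmic profile is essential.
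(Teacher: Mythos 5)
Your overall strategy (a capacity-type cutoff supported away from $\partial_s\Omega$, plus mollification and dominated convergence) is the same one behind the proofs this lemma is attributed to — note the paper does not prove the lemma but quotes \cite{SternbergZumbrun}, \cite{Bayle04}, \cite{MorganRitore02}, \cite{BayleRosales} — but the step you yourself flag as the main obstacle is where your argument genuinely breaks. You justify the tube bound $\mathcal{H}^{n-1}(\partial_r\Omega\cap\{\varrho<t\})\le Ct^{2}$ by covering $\partial_s\Omega$ with at most $Ct^{-(n-3)}$ balls of radius $t$, ``because $\dim_H\partial_s\Omega\le n-3$''. A bound on covering numbers at a single scale $t$ is an upper \emph{Minkowski (box) dimension} bound, which is strictly stronger than a Hausdorff dimension bound: $\dim_H\partial_s\Omega\le n-3$ only gives, for each $s>n-3$ and $\eta>0$, a cover by balls $B_{r_i}(x_i)$ with $\sum_i r_i^{s}<\eta$, with radii of very different sizes; the number of balls of one fixed radius $t$ needed to cover can be far larger than $t^{-(n-3)}$ (already a compact countable set has Hausdorff dimension $0$ but can have box dimension arbitrarily close to the ambient one). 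So the $Ct^{2}$ tube estimate is not a consequence of the stated hypotheses (nor of the extra information available for the $P_k$-minimizers, where $\dim_H\partial_s\Omega\le n-8$ still gives no Minkowski-type rate without much deeper quantitative stratification results), and without \emph{some} rate in $\mathcal{H}^{n-1}(\partial_r\Omega\cap\{\varrho<t\})\to 0$ your dyadic computation does not produce the $O(\log(\delta/\eps))$ bound, hence the Dirichlet energy of the single global logarithmic cutoff anchored at $\varrho=\dist(\cdot,\partial_s\Omega)$ is not shown to vanish.

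The repair, which is exactly the route of the quoted proofs, is to give up the single cutoff built from the distance to the whole singular set and instead superpose per-ball cutoffs adapted to a cover by balls of \emph{varying} radii: cover $\partial_s\Omega$ by finitely many $B_{r_i}(x_i)$ with $\sum_i r_i^{\,n-3}<\eta$ (legitimate in the situation where the lemma is applied, since there $\dim_H\partial_s\Omega\le n-8$, so $\mathcal{H}^{n-3}(\partial_s\Omega)=0$), take $\varphi$ to be the infimum over $i$ of cutoffs vanishing on $B_{r_i}(x_i)$ and equal to $1$ off $B_{2r_i}(x_i)$, and estimate the energy ball by ball using the local area bound $\mathcal{H}^{n-1}(\partial_r\Omega\cap B_r(x))\le Cr^{n-1}$, which — as you correctly note — follows from the bounded generalized mean curvature of the $P_k$-minimizers of \cref{lem:MinimizingPk} via monotonicity (and for those sets one also has $\mathcal{H}^{n-1}(\partial_r\Omega)=P(\Omega)$, needed for your last limit). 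In this varying-radii scheme even the \emph{linear} cutoff suffices, since the total energy is bounded by $C\sum_i r_i^{-2}\,r_i^{\,n-1}=C\sum_i r_i^{\,n-3}<C\eta$; the logarithmic profile is only required in the borderline case of finite nonzero $\mathcal{H}^{n-3}$ measure, so your closing remark about the log being essential applies to your global construction, not to the standard one. With this substitution for the tube bound, the remaining parts of your argument (compact support in $\partial_r\Omega$, smoothing, $\mathcal{H}^{n-1}(\partial_s\Omega)=0$ and dominated convergence giving $\int_{\partial\Omega}\varphi_\delta^2\de\mathcal{H}^{n-1}\to P(\Omega)$) are fine.
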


The above lemma has been proved in the Euclidean setting in \cite[Lemma 2.4]{SternbergZumbrun} under additional regularity assumptions first, and then extended to Riemannian manifolds in \cite[Proposition 2.5, Remark 2.6]{Bayle04}. It is observed in \cite[Lemma 3.1]{MorganRitore02} that the regularity assumptions on $\partial\Omega$ as in the statement are sufficient (see also \cite[Lemma 3.1]{BayleRosales}).

In the following proof of \cref{thm:Concavity}, we are making variations of a volume constrained minimizer $\Omega$ for the penalized perimeter along the normal  field $X = - \varphi \nu_\Omega$, with $\varphi$ given by \cref{lem:ApprossimantiVariazione}. We point out that in dimension $n \leq 7$, by means of the regularity result recalled in  \cref{lem:MinimizingPk}, one could safely consider $X = - \nu_\Omega$.

\begin{proof}[Conclusion of the proof of {\cref{thm:Concavity}}]
For the ease of notation we do not explicitly write the volume forms under the integral sign, since it will be clear from the context. 

Let $\Omega=\Omega(k,\widetilde V)$ be a fixed minimizer of volume $\widetilde V$ for $P_k$. Let $\varphi \in C^\infty_c(\partial_r\Omega)$ be a smooth nonvanishing function with $0\le \varphi\le1$ supported in the regular part of $\partial\Omega$ and define the  normal field $X=-\varphi\nu_\Omega$ and $\phi_t(x)\eqdef \exp(tX(x))$ for $x \in \partial^*\Omega$. Denote by $\Omega_t$ the varied set whose reduced boundary is $\phi_t(\partial^*\Omega)$, for $|t|<\tau$ and $\tau>0$. Define
\[
v(t)\eqdef \vol(\Omega_t),
\qquad
J(v)\eqdef P_k(\Omega_t) \quad \text{for the unique $t$ s.t. $v=v(t)$}.
\]
The definitions of $v(t)$ and $J(v)$ are well posed, indeed
\[
v'(t) = \int_{\partial^*\Omega_t} \scal{-\nu_{\Omega_t}, \partial_t \phi_t},
\]
and then $v'(0)=\int_{\partial\Omega}\varphi>0$ and $v'>0$ for every $t$ if $\tau$ is small enough. Hence $v$ is locally invertible and $J(v)$ is well defined for $v \in (\widetilde V - \widetilde \delta, \widetilde V + \widetilde \delta)$ for some $\widetilde \delta >0$. Let $v^{-1}$ denote the inverse of $v(t)$. We thus have
\begin{equation}\label{eq:Variazioni}
\begin{split}
\frac{\d}{\d t}P(\Omega_t)\bigg|_{t=0} & = \int_{\partial\Omega} \varphi H_\Omega,\\
(v^{-1})'(v) &= \left(\int_{\partial^*\Omega_t} \scal{-\nu_{\Omega_t}, \partial_t \phi_t}\right)^{-1},\\
(v^{-1})'(\widetilde V) &= \left(\int_{\partial\Omega} \varphi\right)^{-1},\\
(v^{-1})''(\widetilde V) &= -\frac{1}{\left( \int_{\partial\Omega} \varphi\right)^2}(v^{-1})'(\widetilde V) v''(0) =-\frac{1}{\left( \int_{\partial\Omega} \varphi\right)^3}\int_{\partial\Omega} \varphi^2 H_\Omega,
\end{split}
\end{equation}
where the last equality follows by computing $v''(0)$ analogously as in \eqref{eq:SecVar1} and \eqref{eq:SecVar2}. Observing that $J(v)=P_k(\Omega_{v^{-1}(v)})$, we can compute
\begin{equation*}
\begin{split}
    \left( \int_{\partial\Omega} \varphi\right)^2 & \frac{\d^2}{\d v^2} J(v) \bigg|_{v=\widetilde V}
    \\&= \left( \int_{\partial\Omega} \varphi\right)^2\frac{\d}{\d v} \left( (v^{-1})' \left( \frac{\d}{\d t}P_k(\Omega_t) \right)\circ v^{-1} \right)\bigg|_{v=\widetilde V} \\
    &=\left( \int_{\partial\Omega} \varphi\right)^2 \left( (v^{-1})''(\widetilde V) \frac{\d}{\d t}P_k(\Omega_t)\bigg|_{t=0} + [(v^{-1})'(\widetilde V)]^2  \frac{\d^2}{\d t^2}P_k(\Omega_t)\bigg|_{t=0}\right).
\end{split}
\end{equation*}
Using \eqref{eq:FirstVariationPotenziale}, \eqref{eq:SecondVariationPotenzialeSpecial}, and \eqref{eq:Variazioni} we get
\begin{equation*}
    \begin{split}
       \left( \int_{\partial\Omega} \varphi\right)^2\frac{\d^2}{\d v^2} J(v) \bigg|_{v=\widetilde V}
    =& -\frac{1}{\int_{\partial\Omega} \varphi}\int_{\partial\Omega} \varphi^2 H_\Omega \left(\int_{\partial\Omega} \varphi H_\Omega + \frac{\d}{\d t}\int_{\Omega_t} f_k \bigg|_{t=0} \right) + \\
    &+\frac{\d^2}{\d t^2}P(\Omega_t)\bigg|_{t=0}  + \frac{\d^2}{\d t^2}\int_{\Omega_t} f_k \bigg|_{t=0}  \\
    =& -\frac{1}{\int_{\partial\Omega} \varphi}\int_{\partial\Omega} \varphi^2 H_\Omega \left(\int_{\partial\Omega} \varphi H_\Omega +\int_{\partial \Omega} \varphi f_k \right) 
    +\\ &+\int_{\partial\Omega} \varphi^2 f_k H_\Omega - \varphi^2 \scal{\nabla f_k , \nu_\Omega}  + \frac{\d^2}{\d t^2}P(\Omega_t)\bigg|_{t=0}.
    \end{split}
\end{equation*}
The second variation formula for the perimeter yields
\[
\frac{\d^2}{\d t^2}P(\Omega_t)\bigg|_{t=0}= \int_{\partial \Omega} |\nabla\varphi|^2 + \varphi^2 \left( H_\Omega^2 - |A_\Omega|^2 - \ric(\nu_\Omega,\nu_\Omega)\right) ,
\]
where $A_\Omega$ is the second fundamental form of $\partial_r \Omega$ and $\nabla\varphi$ is the gradient of $\varphi$ as a function on the submanifold $\partial_r\Omega$. Since $\ric\ge (n-1)K$, $|\nabla f_k|\le \eps_k$, $\varphi\le 1$, and $H_\Omega+ f_k = \lambda \equiv \lambda_{k,\widetilde V} \in \R$ at $\mathcal{H}^{n-1}$ almost every point on $\partial^*\Omega$, we estimate
\begin{equation*}
    \begin{split}
        \left( \int_{\partial\Omega} \varphi\right)^2 & \frac{\d^2}{\d v^2} J(v) \bigg|_{v=\widetilde V}\\
    \le & -\frac{1}{\int_{\partial\Omega} \varphi}\int_{\partial\Omega} \varphi^2 H_\Omega \left(\int_{\partial\Omega}\varphi( H_\Omega + f_k) \right) + \int_{\partial\Omega} \varphi^2 f_k H_\Omega + \eps_k P(\Omega) +\\
    &+\int_{\partial\Omega}\varphi^2 H_\Omega^2 +(n-1)|K|P(\Omega) - \int_{\partial\Omega}  \varphi^2|A_\Omega|^2 + \int_{\partial\Omega} |\nabla\varphi|^2\\
    =&-\lambda\int_{\partial\Omega} \varphi^2H_\Omega + \int_{\partial\Omega} (f_k + H_\Omega)\varphi^2H_\Omega + \eps_k P(\Omega) +\\
    &+(n-1)|K|P(\Omega)- \int_{\partial\Omega}  \varphi^2|A_\Omega|^2 + \int_{\partial\Omega} |\nabla\varphi|^2\\
    =& (\eps_k + (n-1)|K|)P(\Omega)
    - \int_{\partial\Omega}  \varphi^2|A_\Omega|^2 + \int_{\partial\Omega} |\nabla\varphi|^2.
    \end{split}
\end{equation*}
Hence
\begin{equation}\label{eq:J''}
    J''(\widetilde V) \le (\eps_k + (n-1)|K|)\frac{P(\Omega)}{\left( \int_{\partial\Omega} \varphi\right)^2} +\frac{1}{\left( \int_{\partial\Omega} \varphi\right)^2} \left( \int_{\partial\Omega} |\nabla\varphi|^2 -  \int_{\partial\Omega} \varphi^2  |A_\Omega|^2 \right).
\end{equation}
By definition of $I_k$ we have that $I_k(v) \le J(v)$ for any $v \in (\widetilde V - \widetilde \delta, \widetilde V + \widetilde \delta)$ and $I_k(\widetilde V) = J(\widetilde V)$. Then by the latter observation we get
\begin{equation*}
\begin{split}
    \overline{D}^2 I_k (\widetilde V) \eqdef & \limsup_{h\to 0}
    \frac{I_k(\widetilde V +h) + I_k(\widetilde V - h) - 2 I_k(\widetilde V)}{h^2} \le J''(\widetilde V)
\end{split}
\end{equation*}
Now observe that we could take $\varphi=\varphi_\delta$ for any $\delta$ with $\varphi_\delta$ as in \cref{lem:ApprossimantiVariazione}. Hence letting $\delta\to0$, using that $P(\Omega) \ge I(\widetilde V)$, and exploiting \eqref{eq:J''}, we conclude that
\begin{equation}\label{eq:D2IkVtilde}
\overline{D}^2 I_k (\widetilde V) \le  \frac{\eps_k + (n-1)|K|}{P(\Omega)} 
    \le   \frac{\eps_k + (n-1)|K|}{I(\widetilde V)}. 
\end{equation}

Recalling that $I$ is continuous and strictly positive (\cite[Corollary 2]{FloresNardulli20} and \cite[Remark 4.7]{AFP21}), up to taking a smaller $\widetilde \delta$, we have that $I(V)$ is bounded below by a strictly positive constant independent of $V \in (\widetilde V - \widetilde \delta, \widetilde V + \widetilde \delta)$.
Since $\widetilde V$ above is arbitrary, we deduce that $\overline{D}^2 I_k (V) \le 2C_{\widetilde V}<+\infty$ for every $V \in (\widetilde V - \widetilde \delta, \widetilde V + \widetilde \delta)$.
The latter, together with the fact that $I_k$ is continuous, see \cref{lem:ApprossimazioneProfili}, implies that $I_k-C_{\widetilde V}V^2$ is concave on $(\widetilde V - \widetilde \delta, \widetilde V + \widetilde \delta)$.

Passing to the limit $k\to+\infty$, as $I_k\to_k I$ locally uniformly by \cref{lem:ApprossimazioneProfili}, we deduce that $I-C_{\widetilde V}V^2$ is concave on $(\widetilde V - \widetilde \delta, \widetilde V + \widetilde \delta)$ as well. Also, if $K=0$, \eqref{eq:D2IkVtilde} implies that $I$ is concave. In any case, the profile $I$ is twice differentiable almost everywhere, and if $\widetilde V$ is such a point, \eqref{eq:D2IkVtilde} implies that
\[
I''(\widetilde V) \le \frac{(n-1)|K|}{I(\widetilde V)}.
\]
\end{proof}

It would be interesting to understand how the above proof could be improved in order to get concavity properties on the function $I_{(M^n,g)}^{\frac{n}{n-1}}$. Under the a priori assumption of existence of isoperimetric sets, it has been studied in  \cite{BavardPansu86, Bayle03, Bayle04, BayleRosales, MondinoNardulli16}.

\subsection{Consequences of the concavity properties of the isoperimetric profile}
In this section we collect some consequences of \cref{thm:Concavity}. First, since concave functions are locally Lipschitz, we deduce the following useful corollary, which improves the main result in \cite[Theorem 2]{FloresNardulli20}. 

\begin{cor}
\label{cor:loclip}
Suppose that $(M^n,g)$ is noncollapsed and that $\ric \ge (n-1)K$ for some $K \in \R$. Then the isoperimetric profile $I_{(M^n,g)}$ is a locally Lipschitz function.
\end{cor}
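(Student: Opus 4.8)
The plan is to deduce the statement directly from \cref{thm:Concavity}, exploiting the elementary fact that a finite concave function on an open interval of $\R$ is locally Lipschitz there. Since being locally Lipschitz is a local property, it suffices to show that $I\eqdef I_{(M^n,g)}$ is Lipschitz in a neighbourhood of an arbitrary point $\widetilde V\in(0,+\infty)$.

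First I would fix $\widetilde V>0$ and apply \cref{thm:Concavity}: there exist $\delta>0$ and $C\ge 0$ such that the function $g(V)\eqdef I(V)-CV^2$ is concave on the open interval $(\widetilde V-\delta,\widetilde V+\delta)$. Here one should note that $I$ is real-valued and finite on $(0,+\infty)$ — it is positive and continuous by \cite[Corollary 2]{FloresNardulli20} and \cite[Remark 4.7]{AFP21}, and finite because balls of prescribed volume are admissible competitors — so $g$ is a genuine finite concave function. A finite concave function on an open interval is continuous and, on any compact subinterval, its difference quotients are squeezed between the finite one-sided derivatives at the endpoints of a slightly larger subinterval; hence $g$ is Lipschitz on compact subintervals, i.e.\ locally Lipschitz on $(\widetilde V-\delta,\widetilde V+\delta)$.

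Then, writing $I(V)=g(V)+CV^2$ on $(\widetilde V-\delta,\widetilde V+\delta)$ and using that $V\mapsto CV^2$ is smooth, hence locally Lipschitz, and that a sum of locally Lipschitz functions is locally Lipschitz, I conclude that $I$ is locally Lipschitz on $(\widetilde V-\delta,\widetilde V+\delta)$, in particular Lipschitz near $\widetilde V$. As $\widetilde V$ was arbitrary, $I$ is locally Lipschitz on $(0,+\infty)$. I do not expect any real obstacle here: the content is entirely packaged in \cref{thm:Concavity}, and the only point to handle with care is the local, rather than global, character of the argument, since the constants $\delta$ and $C$ delivered by \cref{thm:Concavity} depend on the base point $\widetilde V$.
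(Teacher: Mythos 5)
Your argument is correct and is exactly the paper's own (very short) justification: apply \cref{thm:Concavity} near an arbitrary $\widetilde V$, use that a finite concave function on an open interval is locally Lipschitz, and add back the smooth perturbation $CV^2$. No gaps; the care you take about the local dependence of $\delta$ and $C$ on $\widetilde V$ is precisely what makes the patching over all of $(0,+\infty)$ work.
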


In the following result we exploit the concavity of $I$ under nonnegative Ricci curvature, together with the sharp isoperimetric inequality \eqref{eq:isoperimetricCD}, to deduce the precise asymptotic expansion of $I$ and $I'$ at infinity when $\ric\geq 0$.

\begin{cor}\label{lem:IsopAVR}
Let $(M^n,g)$ be a noncompact Riemannian manifold with $\ric\geq 0$, and denote by $I$ its isoperimetric profile. Then 
\begin{equation}\label{eq:AsintoticaI}
\lim_{v\to+\infty}\frac{I(v)}{v^{(n-1)/n}}=n(\omega_n\mathrm{AVR}(M^n,g))^{1/n}.    
\end{equation}
Moreover the right derivative $I'_+(v)\eqdef \lim_{h\to0^+} \frac{I(v+h)-I(v)}{h}$ exists at any $v>0$ and it satisfies
\begin{equation}\label{eq:AsintoticaI'}
\lim_{v\to+\infty}v^{1/n}I'_+(v)=(n-1)(\omega_n\mathrm{AVR}(M^n,g))^{1/n}.
\end{equation}
\end{cor}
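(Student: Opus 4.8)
The plan is to combine three ingredients: the sharp isoperimetric inequality \eqref{eq:isoperimetricCD} applied to $(M^n,g)$ itself (which, being $\ric\ge 0$, is in particular a $\CD(0,n)$ space), the Bishop--Gromov comparison of \cref{thm:BishopGromov} to produce competitor balls of large volume, and the concavity of $I$ on $(0,+\infty)$ granted by \cref{thm:Concavity} in the case $K=0$. First I would establish \eqref{eq:AsintoticaI}. The lower bound is immediate from \eqref{eq:isoperimetricCD}: $I(v)\ge n\omega_n^{1/n}\mathrm{AVR}(M^n,g)^{1/n}v^{(n-1)/n}$ for every $v>0$. For the matching upper bound, fix $x\in M$ and use geodesic balls $B_r(x)$ as competitors: by Bishop--Gromov $\vol(B_r(x))/(\omega_n r^n)\downarrow \mathrm{AVR}(M^n,g)$ and $P(B_r(x))/(n\omega_n r^{n-1})\to\mathrm{AVR}(M^n,g)$ as well (the perimeter ratio is essentially nonincreasing, lies below the volume ratio by \eqref{3}, and converges to the same limit, as one sees from the coarea formula, cf.\ the arguments in \cref{rem:PerimeterMMS2}). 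Hence for a sequence $r_j\to\infty$ one gets $I(\vol(B_{r_j}(x)))\le P(B_{r_j}(x))$ with $P(B_{r_j}(x))/\vol(B_{r_j}(x))^{(n-1)/n}\to n(\omega_n\mathrm{AVR}(M^n,g))^{1/n}$; since $\vol(B_{r_j}(x))\to+\infty$ and $I$ is continuous (indeed locally Lipschitz by \cref{cor:loclip}), interpolating between consecutive values of the (continuous, monotone up to the lower bound) profile gives $\limsup_{v\to\infty} I(v)/v^{(n-1)/n}\le n(\omega_n\mathrm{AVR}(M^n,g))^{1/n}$, completing \eqref{eq:AsintoticaI}.

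For \eqref{eq:AsintoticaI'}, the first point is that the right derivative $I'_+(v)$ exists at every $v>0$: by \cref{thm:Concavity} with $K=0$, $I$ is concave on $(0,+\infty)$, so it is locally Lipschitz and one-sided derivatives exist everywhere, with $I'_+$ nonincreasing and $I'_+(v)\ge I'_-(w)$ whenever $v\le w$. Now set $L:=(n-1)(\omega_n\mathrm{AVR}(M^n,g))^{1/n}$ and write $\phi(v):=n(\omega_n\mathrm{AVR})^{1/n}v^{(n-1)/n}$, so $\phi'(v)=Lv^{-1/n}$. The strategy is a standard Tauberian-type argument: concavity of $I$ forces its difference quotients to sandwich $I'_+$, and \eqref{eq:AsintoticaI} pins down the difference quotients asymptotically. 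Concretely, for $v>0$ and any $h>0$, concavity gives
\[
\frac{I(v+h)-I(v)}{h}\le I'_+(v)\le \frac{I(v)-I(v-h)}{h}.
\]
Choosing $h=h(v)=\theta v$ for a fixed small $\theta\in(0,1)$ and dividing by $v^{-1/n}$, the outer terms become $v^{1/n}$ times difference quotients of $I$; using \eqref{eq:AsintoticaI} in the form $I(cv)= n(\omega_n\mathrm{AVR})^{1/n}(cv)^{(n-1)/n}+o(v^{(n-1)/n})$ uniformly for $c$ in a compact subset of $(0,\infty)$, one computes that $v^{1/n}\cdot\frac{I(v+\theta v)-I(v)}{\theta v}\to \frac{(1+\theta)^{(n-1)/n}-1}{\theta}\,L\cdot\frac{1}{n-1}\cdot(n-1)$ — more carefully, $\to \frac{(1+\theta)^{(n-1)/n}-1}{\theta}\,n(\omega_n\mathrm{AVR})^{1/n}$, and similarly with $1-\theta$ for the other side. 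Letting $\theta\to 0^+$ after taking $\limsup$ and $\liminf$ in $v$, both bounds converge to $\frac{n-1}{n}\cdot n(\omega_n\mathrm{AVR})^{1/n}=L$, which yields $\lim_{v\to\infty} v^{1/n}I'_+(v)=L$, i.e.\ \eqref{eq:AsintoticaI'}.

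The routine parts are the Bishop--Gromov estimates on perimeters of balls (already packaged in \cref{thm:BishopGromov} and \cref{rem:PerimeterMMS2}) and the elementary limit computations of the difference quotients. The one genuine subtlety — the step I expect to need the most care — is making the passage from \eqref{eq:AsintoticaI} to \eqref{eq:AsintoticaI'} rigorous: one must ensure the error term $o(v^{(n-1)/n})$ in \eqref{eq:AsintoticaI} is uniform enough over the relevant window $[(1-\theta)v,(1+\theta)v]$ to survive division by $v^{(n-1)/n}$ and the subsequent $\theta\to 0$ limit. This uniformity is exactly what concavity buys us: a concave function whose value is asymptotic to a power law automatically has difference quotients (hence one-sided derivatives) asymptotic to the derivative of that power law, so the argument is clean provided one invokes concavity to control $I$ on the whole interval rather than just at its endpoints. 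An alternative, essentially equivalent, route is to note that $I$ concave with $I(v)\sim \phi(v)$ and $\phi$ concave of regular variation implies $I'_+(v)\sim \phi'(v)$ by a direct convexity argument (if $v^{1/n}I'_+(v)$ did not converge to $L$, integrating the one-sided derivative over $[v,2v]$ and using monotonicity of $I'_+$ would contradict \eqref{eq:AsintoticaI}); I would present whichever of the two is shorter.
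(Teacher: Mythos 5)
Your overall scheme is the same as the paper's: the lower bound in \eqref{eq:AsintoticaI} from the sharp inequality \eqref{eq:isoperimetricCD}, the upper bound from Bishop--Gromov balls (perimeter ratio controlled via \eqref{3}, monotonicity and the coarea formula), and then a concavity/Tauberian argument comparing secant slopes of $I$ at scale $h=\theta v$ with the power law to get \eqref{eq:AsintoticaI'}; your concavity sandwich on the difference quotients is just a repackaging of the paper's step where $I'_+$ is integrated over $[v,v(1+\delta)]$ and its monotonicity is used.

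There is, however, one genuine gap: you invoke \cref{thm:Concavity} (and \cref{cor:loclip}) to get concavity and local Lipschitzness of $I$ on $(0,+\infty)$, but both results require $(M^n,g)$ to be \emph{noncollapsed}, which is not among the hypotheses of the corollary (only $\ric\ge 0$ and noncompactness are assumed). As written, your proof of \eqref{eq:AsintoticaI'} simply does not apply when $\inf_x\vol(B_1(x))=0$. The fix is cheap and is exactly what the paper does: in the collapsed case \cref{lem:NonSoDoveMetterlo} gives $I\equiv 0$, hence $I'_+\equiv 0$, and since $\mathrm{AVR}>0$ forces noncollapsedness one also has $\mathrm{AVR}(M^n,g)=0$, so \eqref{eq:AsintoticaI'} holds trivially; the concavity argument is then run only in the noncollapsed case. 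You should also note that your ``interpolation between consecutive ball volumes'' in the upper bound is unnecessary (and in the collapsed case your appeal to \cref{cor:loclip} there is again unavailable): since $r\mapsto\vol(B_r(x))$ is continuous and tends to $\vol(M)=+\infty$, for every large $v$ there is a ball of volume exactly $v$, which is how the paper argues; and the pointwise bound $P(B_r(p))\le(1+\eps)\theta n\omega_n r^{n-1}$ should be upgraded from a.e.\ $r$ to all $r$ (or the radii chosen in the full-measure set), via outer approximation and lower semicontinuity of the perimeter, as the paper spells out. With these adjustments your argument coincides with the paper's proof.
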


\begin{proof}
For simplicity, throughout the proof we denote $\theta:=\mathrm{AVR}(M^n,g)$. Let us first prove \eqref{eq:AsintoticaI}. We first deal with the case $\theta>0$. Let us fix $p\in M^n$. From Bishop--Gromov comparison theorem, see \cref{thm:BishopGromov}, we know that $P(B_r(p))/(n\omega_nr^{n-1})$ is almost everywhere non-increasing on $[0,+\infty)$. Since $\vol(B_r(p))/(\omega_nr^n)\to\theta$, as $r\to+\infty$, a standard use of the coarea formula for $\vol(B_r(p))$, together with the monotonicity ensured by the Bishop--Gromov comparison result, imply that the essential limit of $P(B_r(p))/(n\omega_nr^{n-1})$ as $r\to+\infty$ is $\theta$ as well. Hence, for every $\varepsilon>0$ there exists $r_0:=r_0(\varepsilon)$ such that for almost every $r\geq r_0$ we have 
$$
P(B_r(p))\leq(1+\varepsilon)\theta n\omega_n r^{n-1}.
$$
Moreover, by approximating balls from the outside and by using the semicontinuity of the perimeter, the previous inequality holds for {\em all} $r\geq r_0$. Hence for every $V$ sufficiently large, there exists a ball of center $p$, radius $r:=r(V)\geq r_0$ and volume $V$ such that
$$
I(V)\leq P(B_r(p))\leq (1+\varepsilon)\theta n\omega_nr^{n-1}\leq (1+\varepsilon)n(\omega_n\theta)^{1/n}V^{(n-1)/n},
$$
where in the last inequality we used $\theta\omega_nr^n\leq V$, which comes from Bishop--Gromov comparison. Since $I(V)\geq n(\omega_n\theta)^{1/n}V^{(n-1)/n}$ for every $V>0$, as an immediate outcome of \cref{thm:IsoperimetriSharpBrendle}, we get \eqref{eq:AsintoticaI} in the case $\theta>0$.

Let us now prove \eqref{eq:AsintoticaI} when $\theta=0$. For every $V>0$ there exists a radius $r:=r(V)$ such that $\vol(B_{r(V)}(p))=V$. Notice that $r(V)\to +\infty$ when $V\to +\infty$. Moreover 
\begin{equation}\label{eqn:EstimateOnIV}
\begin{split}
\frac{I(V)}{V^{(n-1)/n}}&\leq \frac{P(B_{r}(p))}{\vol(B_{r}(p))^{(n-1)/n}}=\frac{P(B_r(p))}{\vol(B_r(p))}\left(\frac{\vol(B_r(p))}{\omega_n r^n}\right)^{1/n}r\omega_n^{1/n}\\
&\leq n\omega_n^{1/n}\left(\frac{\vol(B_r(p))}{\omega_n r^n}\right)^{1/n},
\end{split}
\end{equation}
where in the first inequality we used the definition of isoperimetric profile, and in the last inequality we used the comparison in \eqref{3}. Since $\theta=0$ we have that for every $\varepsilon$ there exists $r_0:=r_0(\varepsilon)$ such that $\vol(B_r(p))/(\omega_nr^n)\leq \varepsilon$ for every $r\geq r_0$. Hence from \eqref{eqn:EstimateOnIV} we conclude the sought limit in \eqref{eq:AsintoticaI} also in the case $\theta=0$.

Now we can prove \eqref{eq:AsintoticaI'}. If $M$ is not noncollapsed, then \eqref{eq:AsintoticaI'} is trivially verified due to the equivalence in \cref{lem:NonSoDoveMetterlo}. So let us prove \eqref{eq:AsintoticaI'} when $M^n$ is noncollapsed. By \cref{thm:Concavity} we know that $I$ is concave, which implies the existence of $I'_+$ everywhere and that $I'_+$ is nonincreasing. 

We claim that, for any $\eps>0$ and $0<\delta<2$ there exists $\bar v= \bar v(\eps, \delta)$ such that 
\begin{equation}\label{zz2}
    \left| \frac{I(v(1+\delta)) - I(v)}{(v(1+\delta))^{\frac{n-1}{n}} - v^{\frac{n-1}{n}} }
    - n(\omega_n \theta)^{1/n}\right|
    \le \eps 
    \quad \text{for any $v\ge \bar v$.}
\end{equation}
Indeed the latter follows from \eqref{eq:AsintoticaI} and the inequality
\begin{align*}
    & \left| \frac{I(v(1+\delta)) - I(v)}{(v(1+\delta))^{\frac{n-1}{n}} - v^{\frac{n-1}{n}} }
    - n(\omega_n \theta)^{1/n}\right| 
    \\& \le 
     \frac{(1+\delta)^{\frac{n-1}{n}}}{(1+\delta)^{\frac{n-1}{n}}-1}\left|\frac{I(v(1+\delta))}{(v(1+\delta))^{\frac{n-1}{n}}} - n(\omega_n \theta)^{1/n}\right|
    +
     \frac{1}{(1+\delta)^{\frac{n-1}{n}}-1}\left|\frac{I(v)}{v^{\frac{n-1}{n}}} - n(\omega_n \theta)^{1/n}\right| \, .
\end{align*}
The monotonicity of $I_+'$ gives
\begin{equation}
    I(v(1+\delta)) - I(v) = \int_{v}^{v(1+\delta)} I'_+(t) dt \ge \delta v I_+'(v(1+\delta)) \, ,
\end{equation}
and
\begin{equation}
    I(v(1+\delta)) - I(v) = \int_{v}^{v(1+\delta)} I'_+(t) dt \le \delta v I_+'(v) \, ,
\end{equation}
for all $v>0$, where we also employed the fact that $I$ is Lipschitz, as observed in \cref{cor:loclip}, in order to apply the Fundamental Theorem of Calculus.
Hence, we get
\begin{equation}\label{z2}
\begin{split}
        \frac{I(v(1+\delta)) - I(v)}{(v(1+\delta))^{\frac{n-1}{n}} - v^{\frac{n}{n-1}}}
    &\ge 
    \frac{\delta v\, I'_+(v(1+\delta))}{\frac{n-1}{n}\int_v^{v(1+\delta)} t^{-1/n} dt}\\
    &\ge
    (1+\delta)^{-1/n}\frac{n}{n-1}  (v(1+\delta))^{1/n} I'_+(v(1+\delta)) \, ,
\end{split}
\end{equation}
and
\begin{equation}\label{z3}
    \frac{I(v(1+\delta)) - I(v)}{(v(1+\delta))^{\frac{n-1}{n}} - v^{\frac{n}{n-1}}}
    \le 
    \frac{\delta v\, I'_+(v)}{\frac{n-1}{n}\int_v^{v(1+\delta)} t^{-1/n} dt}
    \le
    (1+\delta)^{1/n} \frac{n}{n-1}  v^{1/n} I'_+(v) \, ,
\end{equation}
for all $v>0$. The latter, together with \eqref{zz2}, implies
\begin{align*}
    (1+\delta)^{-1/n} (n-1)&(\omega_n\theta)^{1/n} - C(n)\eps 
    \\& \le  \liminf_{v\to \infty} v^{1/n}I_+'(v) 
     \\& \le
    \limsup_{v\to \infty} v^{1/n}I_+'(v)
    \\& \le
     (1+\delta)^{1/n} (n-1)(\omega_n\theta)^{1/n} + C(n)\eps \, .
\end{align*}
Letting $\eps, \delta\to 0$ we get the sought conclusion.
\end{proof}

\begin{remark}
We remark that \eqref{eq:AsintoticaI} holds for arbitrary $\CD(0,n)$ spaces since it only relies on Bishop--Gromov monotonicity, see \cref{rem:PerimeterMMS2}, and \cref{thm:IsoperimetriSharpBrendle}.
\end{remark}

In the following corollary we observe that for an arbitrary manifold with nonnegative Ricci curvature and Euclidean volume growth the isoperimetric profile is strictly increasing. This should be compared with \cite[Theorem 3.3]{Rit17} where the author shows that on every complete noncompact Riemannian manifold with strictly positive sectional curvature the isoperimetric profile is strictly increasing.  

\begin{cor}\label{cor:ProfileIncreasing}
Let $(M^n,g)$ be a noncompact Riemannian manifold with $\ric\geq 0$ and $\mathrm{AVR}(M^n,g)>0$. Then the isoperimetric profile $I_{(M^n,g)}$ is strictly increasing.
\end{cor}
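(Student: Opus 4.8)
The plan is to combine the concavity of the isoperimetric profile furnished by \cref{thm:Concavity} with the asymptotic behaviour of its right derivative coming from \cref{lem:IsopAVR}. First I would observe that $\mathrm{AVR}(M^n,g)>0$ forces $(M^n,g)$ to be noncollapsed (as recalled in the introduction), so that \cref{thm:Concavity} applies with $K=0$ and yields that $I\eqdef I_{(M^n,g)}$ is concave on $(0,+\infty)$. In particular the right derivative $I'_+(v)=\lim_{h\to 0^+}\frac{I(v+h)-I(v)}{h}$ exists at every $v>0$ and the function $v\mapsto I'_+(v)$ is nonincreasing. Moreover, by \cref{cor:loclip} the profile $I$ is locally Lipschitz, hence $I(b)-I(a)=\int_a^b I'_+(t)\,\de t$ for all $0<a<b$ by the Fundamental Theorem of Calculus.

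The core of the argument is to show that $I'_+(v)>0$ for every $v>0$. By \eqref{eq:AsintoticaI'} in \cref{lem:IsopAVR} we have $v^{1/n}I'_+(v)\to (n-1)(\omega_n\mathrm{AVR}(M^n,g))^{1/n}>0$ as $v\to+\infty$, so there exists $w_0>0$ such that $I'_+(w)>0$ for every $w\ge w_0$. Since $I'_+$ is nonincreasing, for every $v\in(0,w_0]$ we get $I'_+(v)\ge I'_+(w_0)>0$; combined with the previous sentence this gives $I'_+(v)>0$ for all $v>0$. Consequently, for any $0<a<b$ it holds $I(b)-I(a)=\int_a^b I'_+(t)\,\de t>0$, which is exactly the strict monotonicity of $I$.

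I do not expect any serious obstacle here, since the two quoted results do essentially all the work; the only point requiring a little care is the use of the Fundamental Theorem of Calculus, which is legitimate precisely because of the local Lipschitz regularity recorded in \cref{cor:loclip}. As an alternative that bypasses \eqref{eq:AsintoticaI'}, one could argue by contradiction: if $I'_+(v_0)\le 0$ at some $v_0>0$, then the monotonicity of $I'_+$ gives $I'_+\le 0$ on $[v_0,+\infty)$, hence $I(v)\le I(v_0)$ for every $v\ge v_0$, contradicting the fact that $I(v)\to+\infty$ as $v\to+\infty$, which in turn follows from \eqref{eq:AsintoticaI} together with $\mathrm{AVR}(M^n,g)>0$.
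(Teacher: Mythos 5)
Your proof is correct and follows essentially the same route as the paper: concavity of $I$ from \cref{thm:Concavity} (using that $\mathrm{AVR}>0$ forces noncollapsedness), monotonicity of $I'_+$, and the asymptotics from \cref{lem:IsopAVR} to rule out $I'_+\le 0$. The only cosmetic difference is that your main argument invokes \eqref{eq:AsintoticaI'} while the paper uses \eqref{eq:AsintoticaI}; your sketched alternative is precisely the paper's argument, so no further comparison is needed.
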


\begin{proof}
Since $I_{(M^n,g)}$ is concave due to \cref{thm:Concavity}, we have that $I'_+$ is nonincreasing. Moreover, since we have the asymptotic relation in \eqref{eq:AsintoticaI} and $\mathrm{AVR}(M^n,g)>0$, we conclude that $I'_+\geq 0$. Hence, if for some $v_0>0$ we have $I'_+(v_0)=0$, therefore $I'_+(v)=0$ for every $v\geq v_0$, which is in contradiction with \eqref{eq:AsintoticaI}. In conclusion $I'_+>0$ everywhere, and hence $I$ is strictly increasing.
\end{proof}

\section{Main isoperimetric existence results}\label{sec:SectionExistence}

In this section we prove the main existence result \cref{thm:Principale} and the consequent \cref{cor:Princ2} and \cref{thm1}.

\subsection{Proof of {\cref{thm:Principale}}}
Let us denote for simplicity $\theta:=\mathrm{AVR}(M^n,g)$. We divide the proof in two steps. The first step is the following Lemma in which we prove that, under the hyptoheses of \cref{thm:Principale}, for large volumes $V$ any minimizing sequence of bounded finite perimeter sets of volume $V$ can lose at most one piece at infinity along $N$.

\begin{lemma}\label{lem:Step1}
Let $k\geq 0$ be a natural number, and let $(M^n,g)=(\mathbb R^k\times N^{n-k},g_{\mathbb R^k}+ g_N)$ be a complete noncollapsed Riemannian manifold with $\ric\geq 0$. Assume that $M$ satisfies the hypothesis of \cref{thm:Principale}. Then there exists $\overline{V}>0$ such that the following holds. 

For every $V\geq \overline{V}$, let $\{\Omega_i\}_{i\in\mathbb N}$ be a minimizing sequence for the volume $V$ of bounded finite perimeter sets. Let $p_{i,j}:=(t_{i,j},x_{i,j})\in\mathbb R^k\times N$ be the points for which \cref{thm:MassDecomposition} applied to $\{\Omega_i\}_{i\in\mathbb N}$ holds, using the same notation therein, and for a fixed $x_0\in N$ let 
\begin{equation}\label{eqn:J1J2}
\begin{split}
    J_1\eqdef &\{j:\sup_i \dist_N(x_0,x_{i,j})<+\infty\}, \\
    J_2\eqdef &\{j:\limsup_i \dist_N(x_0,x_{i,j})= +\infty\}.
\end{split}
\end{equation}
Then for every $j\in J_2$ we have $\lim_i \vol(\Omega_{i,j}^d) > V/2$. In particular, $J_2$ consists of at most one element for any $V \geq \overline{V}$.
\end{lemma}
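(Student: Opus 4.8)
\noindent\emph{Proof plan.} Write $\theta:=\mathrm{AVR}(M^n,g)\ge 0$; after discarding the trivial pieces we may assume $w_j:=\lim_i\vol(\Omega_{i,j}^d)=\meas_j(Z_j)>0$ for every $j$. The plan is to rule out, for $V$ large, the existence of one piece $j_0\in J_2$ with $w_{j_0}\le V/2$; since distinct pieces carry disjoint masses that sum to at most $V$ by \eqref{eq:UguaglianzeIntro}, this immediately forces $J_2$ to have at most one element, and gives $w_j>V/2$ for $j\in J_2$. So assume by contradiction $w_{j_0}\le V/2$ for some $j_0\in J_2$.

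\smallskip
\noindent First I would pin down the limit $X_{j_0}$. Along a subsequence in $i$ one has $\dist_N(x_0,x_{i,j_0})\to+\infty$; since $(M,g)=\mathbb R^k\times N$ is a Riemannian product, the $\mathbb R^k$-translation $(t,x)\mapsto(t-t_{i,j_0},x)$ is an isometry preserving $\vol$, so $(M,\dist,\vol,p_{i,j_0})$ is isomorphic as a pointed m.m.s.\ to $(M,\dist,\vol,(0,x_{i,j_0}))$. Hence $X_{j_0}$ is also a pmGH limit of the sequence $(M,\dist,\vol,(0,x_{i,j_0}))$ with $\dist_N(x_0,x_{i,j_0})\to+\infty$, and the hypothesis of \cref{thm:Principale} gives $\mathrm{AVR}(X_{j_0})\ge\theta+\varepsilon$. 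As $\meas_{j_0}=\mathcal H^n$ and $X_{j_0}$ is a $\CD(0,n)$ space, the sharp isoperimetric inequality \eqref{eq:isoperimetricCD} of \cref{thm:IsoperimetriSharpBrendle} applied on $X_{j_0}$ yields
\[
I_{X_{j_0}}(w_{j_0})=P_{X_{j_0}}(Z_{j_0})\ge n\omega_n^{1/n}(\theta+\varepsilon)^{1/n}\,w_{j_0}^{\frac{n-1}{n}}.
\]

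\smallskip
\noindent Next I would prove the comparison $I(V)\ge I(V-w_{j_0})+I_{X_{j_0}}(w_{j_0})$ by deleting the escaping piece from the minimizing sequence. Recalling that in the construction of \cite{AFP21} underlying \cref{thm:MassDecomposition} one may take $\Omega_{i,j_0}^d=\Omega_i\cap B_{R_i}(p_{i,j_0})$, I would choose the radius $R_i$ in a window $[\rho_i,2\rho_i]$ with $\rho_i\to+\infty$ so that $\mathcal H^{n-1}(\Omega_i\cap\partial B_{R_i}(p_{i,j_0}))\le\rho_i^{-1}\vol(\Omega_i)\to0$ (possible by averaging in the radius, since $\vol(\Omega_i)=V$); then the truncations $\Omega_i':=\Omega_i\setminus B_{R_i}(p_{i,j_0})$ satisfy $\vol(\Omega_i')\to V-w_{j_0}$ and $P(\Omega_i')=P(\Omega_i)-P(\Omega_{i,j_0}^d)+o(1)$. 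Since $P(\Omega_{i,j_0}^d)\to P_{X_{j_0}}(Z_{j_0})$ by \cref{thm:MassDecomposition}(iii) and $I$ is continuous, passing to the limit in $I(\vol\Omega_i')\le P(\Omega_i')$ gives the claim. I expect this truncation step to be the main technical obstacle: it is essentially the ``generalized subadditivity'' of $I$ built into the mass decomposition of \cite{AFP21}, and the delicate point is that cutting out the $j_0$-piece releases \emph{exactly} $P_{X_{j_0}}(Z_{j_0})$ of perimeter in the limit.

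\smallskip
\noindent Finally I would combine this with concavity. By \cref{thm:Concavity} (with $K=0$) $I$ is concave on $(0,+\infty)$, and by \cref{cor:loclip} it is locally Lipschitz, so $I'_+$ is nonincreasing and $I(V)-I(V-w_{j_0})=\int_{V-w_{j_0}}^{V}I'_+(t)\de t\le w_{j_0}\,I'_+(V-w_{j_0})$. The previous two displays then force
\[
I'_+(V-w_{j_0})\ge n\omega_n^{1/n}(\theta+\varepsilon)^{1/n}\,w_{j_0}^{-1/n}.
\]
If $w_{j_0}\le V/2$ then $w_{j_0}^{-1/n}\ge(V/2)^{-1/n}$ and $V-w_{j_0}\ge V/2$, so by monotonicity of $I'_+$ we get $(V/2)^{1/n}I'_+(V/2)\ge n(\omega_n(\theta+\varepsilon))^{1/n}$. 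But \eqref{eq:AsintoticaI'} gives $(V/2)^{1/n}I'_+(V/2)\to(n-1)(\omega_n\theta)^{1/n}$ as $V\to+\infty$, and $(n-1)(\omega_n\theta)^{1/n}<n(\omega_n(\theta+\varepsilon))^{1/n}$ strictly (both when $\theta>0$ and when $\theta=0$). Hence there is $\overline V>0$ such that for $V\ge\overline V$ no piece in $J_2$ can have mass $\le V/2$, which, as observed above, yields $\lim_i\vol(\Omega_{i,j}^d)>V/2$ for every $j\in J_2$ and consequently $\#J_2\le1$.
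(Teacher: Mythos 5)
Your overall architecture is the same as the paper's: assume by contradiction that some escaping piece $j_0\in J_2$ has limit mass $w_{j_0}\le V/2$; use the $\mathbb R^k$-translation invariance to see that $X_{j_0}$ falls under the hypothesis of \cref{thm:Principale}, so \cref{thm:IsoperimetriSharpBrendle} gives $P_{X_{j_0}}(Z_{j_0})\ge n\omega_n^{1/n}(\theta+\varepsilon)^{1/n}w_{j_0}^{(n-1)/n}$; establish the comparison $I(V)\ge I(V-w_{j_0})+P_{X_{j_0}}(Z_{j_0})$; and contradict it for $V$ large using the concavity of $I$ (\cref{thm:Concavity}) and the asymptotics \eqref{eq:AsintoticaI'} (your increment bound $I(V)-I(V-w_{j_0})\le w_{j_0}I'_+(V-w_{j_0})$ versus the paper's direct integration of the derivative bound is an inessential variant, and your threshold $\overline V$ depends only on $V$, as it must).

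The genuine gap is in how you obtain the key inequality $I(V)\ge I(V-w_{j_0})+P_{X_{j_0}}(Z_{j_0})$. Your truncation argument rests on the claim that ``in the construction of \cite{AFP21} one may take $\Omega_{i,j_0}^d=\Omega_i\cap B_{R_i}(p_{i,j_0})$'' with a radius $R_i$ re-chosen by averaging in $[\rho_i,2\rho_i]$, while \emph{simultaneously} retaining the conclusions of item (iii) of \cref{thm:MassDecomposition} (namely $\vol(\Omega_i\cap B_{R_i}(p_{i,j_0}))\to w_{j_0}$ and $P(\Omega_{i,j_0}^d)\to P_{X_{j_0}}(Z_{j_0})$ for \emph{this} choice of sets). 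None of this is contained in the statement of \cref{thm:MassDecomposition}: the theorem only provides abstract disjoint subsets $\Omega_{i,j}^d\subset\Omega_i$, and from the statement alone deleting $\Omega_{i,j_0}^d$ from $\Omega_i$ need not decrease the perimeter at all. Making your step rigorous would require re-entering the concentration--compactness construction of \cite{AFP21} (a diagonal choice of radii compatible with the one made there, plus lower semicontinuity of the perimeter along the pmGH convergence to control $\liminf_i P(\Omega_i,B_{R_i}(p_{i,j_0}))$), i.e.\ re-proving machinery rather than using it. The paper sidesteps this entirely: since $\Omega$ and each $Z_\ell$ are isoperimetric in their respective spaces, item (iv) gives $I(V)=I(\vol\Omega)+\sum_\ell I_{X_\ell}(\meas_\ell(Z_\ell))$, and the generalized subadditivity of \cite[Proposition 3.2]{AFP21} (the profile of $M$ at a summed volume is at most $I(\cdot)$ plus the sum of the profiles of its pmGH limits at infinity) collapses all terms with $\ell\neq j_0$ into $I(V-w_{j_0})$. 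Replacing your truncation by this citation closes the gap and leaves the rest of your argument intact; note also that, exactly as in the paper's proof, one needs $w_{j_0}>0$, which is guaranteed by the construction of the decomposition rather than by its quoted statement.
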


\begin{proof}
Let us observe first that \eqref{eq:AsintoticaI'} implies that there exists $\widehat{V}>0$ such that
\begin{equation*}
    \begin{split}
        I'_{+}(v) < \frac{n}{n-1} (n-1)(\omega_n(\theta+ \varepsilon/2))^{1/n} v^{-1/n}& \qquad \forall v \ge \widehat{V}, \\
    \end{split}
\end{equation*}
where the constant $ \varepsilon$ is the one provided by the hypothesis in \cref{thm:Principale} .
In particular
\begin{equation*}
    \begin{split}
        I(V)-I(V-\delta) &< \int_{V-\delta}^V \frac{n}{n-1} (n-1)(\omega_n(\theta+ \varepsilon/2))^{1/n} v^{-1/n} \de v \\
       & \leq n(\omega_n(\theta+ \varepsilon/2))^{1/n} \frac{\delta}{(V-\delta)^{1/n}},
    \end{split}
\end{equation*}
whenever $V>V-\delta \ge \widehat{V}$. Also, we have that $\delta/(V-\delta)^{1/n} \le \delta^{1-1/n}$ if $\delta\le V/2$. Therefore
\begin{equation}\label{eq:StimaConcav}
    I(V)-I(V-\delta) < n(\omega_n(\theta+ \varepsilon/2))^{1/n}\delta^{1-1/n},
\end{equation}
whenever $V>V-\delta \ge \widehat{V}$ and $\delta\le V/2$.

So we choose $\overline{V}\eqdef 2\widehat{V}$. Let $V\ge \overline{V}$ and $\{\Omega_i\}_{i\in\mathbb N}$ be an arbitrary minimizing sequence of bounded finite perimeter sets of volume $V$. Let $\Omega_{i,j}^d$ be defined as in \cref{thm:MassDecomposition}, and we adopt the notation therein. Suppose by contradiction that $\meas_j(Z_j)=\lim_i \vol( \Omega_{i,j}^d) \le V/2$ for some $j\in J_2$. From \cref{thm:MassDecomposition} we have
\[
    I(V) = P_{X_j}(Z_j) + P(\Omega) + \sum_{\ell\neq j} P_{X_\ell}(Z_\ell) = P_{X_j}(Z_j)+ I(\vol(\Omega)) + \sum_{\ell\neq j} I_{X_\ell}(\meas_\ell(Z_\ell)) .
\]
By \cref{thm:IsoperimetriSharpBrendle}, by hypothesis, and by \cite[Proposition 3.2]{AFP21} we then get
\[
\begin{split}
    I(V) &\ge n(\omega_n(\theta+ \varepsilon))^{1/n}\meas_j(Z_j)^{(n-1)/n} + I(\vol(\Omega)) + \sum_{\ell\neq j} I_{X_\ell}(\meas_\ell(Z_\ell))
\\
&\ge n(\omega_n(\theta+ \varepsilon))^{1/n}\meas_j(Z_j)^{(n-1)/n}  + I \left(\vol(\Omega) + \sum_{\ell\neq j} \meas_\ell(Z_\ell) \right)
\\
&= n(\omega_n(\theta+ \varepsilon))^{1/n}\meas_j(Z_j)^{(n-1)/n}  + I\left (V -  \meas_j(Z_j) \right).
\end{split}
\]
By the absurd hypothesis we can take $\delta= \meas_j(Z_j)$ in \eqref{eq:StimaConcav} contradicting the above estimate, and thus completing the proof of the claim.
\end{proof}

Let us now conclude the proof of \cref{thm:Principale} by exploiting the previous Lemma. Let us fix $p:=(0,x_0)\in\mathbb R^k\times N$. We now show that there exists $V_0\geq \overline V$ such that for every $V\geq V_0$ there exists an isoperimetric region of volume $V$, thus concluding the proof of the Theorem.

To reach the latter conclusion we start proving that there exists $V_0\geq \overline V$ such that the following improvement of \cref{lem:Step1} holds. 
Let $V\geq V_0$, and let $\{\Omega_i\}_{i\in\mathbb N}$ be a minimizing sequence for the volume $V$ of bounded finite perimeter sets. Let $p_{i,j}:=(t_{i,j},x_{i,j})\in\mathbb R^k\times N$ be the points for which \cref{thm:MassDecomposition} applied to $\{\Omega_i\}_{i\in\mathbb N}$ holds, using the same notation therein, and define 
\[
\begin{split}
    J_1\eqdef &\{j:\sup_i \dist_N(x_0,x_{i,j})<+\infty\}, \\
    J_2\eqdef &\{j:\limsup_i \dist_N(x_0,x_{i,j})= +\infty\},
\end{split}
\]
We claim that $J_2$ is actually empty. 

Let us prove the previous claim by contradiction, after having provided the precise value of $V_0$. Let $ \varepsilon$ be the constant given by the hypothesis of \cref{thm:Principale}. Let us first consider the case $\theta>0$. Let us define 
$$
\varepsilon^*:=(1+ \varepsilon/\theta)^{1/n}-1.
$$
Let $V_0\geq \overline V$ be big enough, where $\overline V$ is provided by \cref{lem:Step1}, such that
\begin{equation}\label{eqn:ContrNEW}
I(V)\leq n(\omega_n\theta)^{1/n}V^{(n-1)/n}\left(1+2^{-1}\varepsilon^*(1/3)^{(n-1)/n}\right), \qquad \text{for all $V\geq V_0$.}
\end{equation}
Such a value of $V_0$ exists thanks to \cref{lem:IsopAVR}. From now on, we fix $V\geq V_0$ and we fix a minimizing sequence $\{\Omega_i\}_{i\in\mathbb N}$ of bounded finite perimeter sets of volume $V$ for which, by contradiction, there exists exactly one $j_0\in J_2$, by \cref{lem:Step1}. From \eqref{eq:isoperimetricCD} and the hypothesis we have
\begin{equation}\label{eqn:EstimateNEW}
\frac{P_{X_{j_0}}(Z_{j_0})}{\meas_{j_0}(Z_{j_0})^{(n-1)/n}}\geq n\omega_n^{1/n}\mathrm{AVR}(X_{j_0},\dist_{j_0},\meas_{j_0})^{1/n}\geq n\omega_n^{1/n}(\theta+ \varepsilon)^{1/n}.
\end{equation}

From \cref{lem:Step1} and from item (iii) of \cref{thm:MassDecomposition} we have
\begin{equation}\label{eqn:MobbastaveramenteperoNEW}
\meas_{j_0}(Z_{j_0})=\lim_i\vol(\Omega_{i,{j_0}}^d)\geq V/3.
\end{equation}
For every $j\in J_1$, we have that $(M,\dist,p_{i,j},\vol)\to_{i}(M,\dist,\overline p_j,\vol)$ in the pmGH topology, for some $\overline p_j\in M$. Indeed, since $j\in J_1$ we have that, up to subsequences, $x_{i,j}\to_{i} \overline x_j\in N$. Hence it suffices to choose $\overline p_j:=(0,\overline x_j)$ thanks to the homogeneity in the factor $\mathbb R^k$. Hence, by the result in \cref{thm:MassDecomposition}, we have that $\Omega_{i,j}\to_i Z_j$, where $Z_j$ is an isoperimetric region in $M$. Now,
by exploiting item (iv) of \cref{thm:MassDecomposition}, taking into account \eqref{eqn:EstimateNEW} and \eqref{eqn:MobbastaveramenteperoNEW}, exploiting also the isoperimetric inequality in \cref{thm:IsoperimetriSharpBrendle}, we finally get, recalling that $\varepsilon^*:=(1+ \varepsilon/\theta)^{1/n}-1$, that
\begin{equation}\label{eq:MainEstimateJ2}
    \begin{split}
        I(V)&=P(\Omega)+\sum_{j\in J_1}P(Z_j) + P_{X_{j_0}}(Z_{j_0}) \\ &\geq n(\omega_n\theta)^{1/n}\left(\vol(\Omega)^{(n-1)/n}+\sum_{j\in J_1}\vol(Z_j)^{(n-1)/n}+(1+ \varepsilon/\theta)^{1/n}\meas_{j_0}(Z_{j_0})^{(n-1)/n}\right) \\
        &\geq n(\omega_n\theta)^{1/n}\left(\left(\vol(\Omega)+\sum_{j=1}^{\overline N}\meas_j(Z_j)\right)^{(n-1)/n}+\varepsilon^*\meas_{j_0}(Z_{j_0})^{(n-1)/n}\right) \\
        &\geq n(\omega_n\theta)^{1/n}V^{(n-1)/n}\left(1+\varepsilon^*(1/3)^{(n-1)/n}\right),
    \end{split}
\end{equation}
which is a contradiction with \eqref{eqn:ContrNEW}. Hence $J_2$ is empty. 

Let us now consider the case $\theta=0$. From \eqref{eq:AsintoticaI} we have that there exists $V_0\geq \overline V$ such that for all $V\geq V_0$ we have 
\begin{equation}\label{eqn:BASTABASTA}
I(V)\leq n(\omega_n2^{-1}(1/3)^{(n-1)/n} \varepsilon)^{1/n}V^{(n-1)/n}.
\end{equation}
Arguing similarly as before we have, for every $V\geq V_0$, and for every minimizing sequence $\{\Omega_i\}_{i\in\mathbb N}$ of bounded sets of finite perimeter of volume $V$, by using the same notation as before,
\begin{equation}\label{eq:ww}
    \begin{split}
        I(V)&=P(\Omega)+\sum_{j\in J_1}P(Z_j)+P_{X_{j_0}}(Z_{j_0}) \geq P_{X_{j_0}}(Z_{j_0}) \\
        &\geq n(\omega_n \varepsilon)^{1/n}\meas_{j_0}(Z_{j_0})^{(n-1)/n}\geq  n(\omega_n(1/3)^{(n-1)/n} \varepsilon)^{1/n}V^{(n-1)/n},
    \end{split}
\end{equation}
which is a contradiction with \eqref{eqn:BASTABASTA}, and thus $J_2$ is empty also in this case.

Let us now take $V_0\geq \overline V$ such that the previous claim about $J_2$ to be empty (for every minimizing sequence of bounded sets of finite perimeter of volume greater than $V_0$) holds. Let us now conclude that for every $V\geq V_0$ there exists an isoperimetric region of volume $V$ in $M$. Indeed, let us take a minimizing sequence $\{\Omega_i\}_{i\in\mathbb N}$ of bounded finite perimeter sets of volume $V$. We have already shown that for every $j\in J_1$, and thus for all $j$'s since $J_2$ is empty as $V\geq V_0$, we have that $(M,\dist,p_{i,j},\vol)\to_{i}(M,\dist,\overline p_j,\vol)$ in the pmGH topology, for some $\overline p_j\in M$. From \cref{cor:IsopBounded} we get that $Z_j$ is bounded for every $j$ since it is an isoperimetric region in $M$. Hence, by properly translating $Z_j$ along the coordinate $\mathbb R^k$ into some $Z_j'$, we can define $\Omega':=\sqcup_{j=1}^{\overline N} Z'_j$, where the $Z_j'$s are mutually disjoint. Moreover, from \eqref{eq:UguaglianzeIntro} we get that $\vol(\Omega')=V$. Hence, from \eqref{eq:UguaglianzeIntro}, we conclude that 
\begin{equation}
\label{condizione-critica}
I(V)=P(\Omega)+\sum_{j=1}^{\overline N} P(Z_j)=P(\Omega')\geq I(V),
\end{equation}
from which $\Omega'$ is the sought isoperimetric region of volume $V$.

\subsection{Proof of {\cref{cor:Princ2}}}

In order to complete the proof of \cref{cor:Princ2} we will use \cref{thm:Principale} together with the following Lemma, that we state and prove in the general $\RCD$ setting. In item (i) of \cref{lem:DensitaStaccata}
we show that when the hypothesis of \cref{cor:Princ2} holds, then the density of the points at distance one from the tips of the asymptotic cones of $M^n$ is uniformly greater than $\mathrm{AVR}(M^n,g)$. In item (ii) of \cref{lem:DensitaStaccata} we show that when the previous conclusion on the density holds, then every pmGH limit at infinity has $\mathrm{AVR}$ uniformly greater than the $\mathrm{AVR}(M^n,g)$.

\begin{lemma}\label{lem:DensitaStaccata}
Let $k\geq 0$ be a natural number, let $(Y,\dist_Y,\meas_Y)$ be a metric measure space,  and assume
$$
(X:=\mathbb R^k\times Y,\dist:=\dist_{\mathbb R^k}\otimes\dist_Y,\meas:=\meas_{\mathbb R^k}\otimes \meas_Y),
$$
is an $\RCD(0,n)$ space. Assume 
\begin{equation}\label{eqn:HypothesisTheta}
\mathrm{AVR}(X,\dist,\meas)>0.
\end{equation}
Then the following two statements hold.
\begin{itemize}
\item[(i)] If there exists no asymptotic cone of $(Y,\dist_Y,\meas_Y)$ that splits a line, then there exists an ${\varepsilon}>0$ such that for every asymptotic cone 
$$
(\mathbb R^k\times C,\dist_{\mathbb R^k}\otimes\dist_C,\meas_{\mathbb R^k}\otimes\meas_C,(0,v_\infty)),
$$ 
of $(X,\dist,\meas)$, and for every $p\in \mathbb R^k\times C$ with $\dist_{\mathbb R^k}\otimes\dist_C(p,\mathbb R^k\times\{v_\infty\})=1$, we have
$$
\,\,\,\lim_{r\to 0}\frac{\meas_{\mathbb R^k}\otimes\meas_ C(B_r(p))}{\omega_nr^n}=:\vartheta[(\mathbb R^k\times C,\dist_{\mathbb R^k}\otimes\dist_C,\meas_{\mathbb R^k}\otimes\meas_C,p)]\geq\mathrm{AVR}(X,\dist,\meas)+{\varepsilon}.
$$
\item[(ii)] 
Assume that there exists $\alpha > 0$ such that
$$
\lim_{r\to 0}\frac{\meas_{\mathbb R^k}\otimes\meas_ C(B_r(p))}{\omega_nr^n}\geq\alpha,
$$
for every asymptotic cone 
$$
(\mathbb R^k\times C,\dist_{\mathbb R^k}\otimes\dist_C,\meas_{\mathbb R^k}\otimes\meas_C,(0,v_\infty)),
$$ 
of $(X,\dist,\meas)$ and for every $p\in \mathbb R^k\times C$ with $\dist_{\mathbb R^k}\otimes\dist_C(p,\mathbb
R^k\times\{v_\infty\})=1$.
Fix $x_0:=(0,y_0) \in \mathbb R^k\times Y$. Fix $\{x_i:=(t_i,y_i)\}_{i\geq 1}\subset X$ a sequence with $\dist_Y(y_i,y_0)\to_i +\infty$, and let $X_{\infty}$ be a pmGH limit of a subsequence of $(X,\dist,\meas,x_i)$, namely 
$$
(X,\dist,\meas,x_i)\to_i(X_{\infty},\dist_\infty,\meas_\infty,x_\infty),
$$
in the pmGH sense up to a subsequence.
Hence, $\mathrm{AVR}(X_{\infty},\dist_\infty,\meas_\infty)\geq\alpha$.
\end{itemize}
\end{lemma}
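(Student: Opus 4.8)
The plan is to establish (i) and (ii) separately, in each case by a blow-down argument combined with the rigidity in the Bishop--Gromov inequality.

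For item (i) I would argue by contradiction. Recall that on any $\RCD(0,n)$ space the density at a point is at least the asymptotic volume ratio by Bishop--Gromov monotonicity (\cref{rem:PerimeterMMS2}), and that $\mathrm{AVR}$ is invariant under blow-down, so $\mathrm{AVR}(\mathbb{R}^k\times C)=\mathrm{AVR}(X)$ for every asymptotic cone. If no uniform $\varepsilon$ existed, one could pick asymptotic cones $\mathbb{R}^k\times C_m$ of $X$ and points $p_m=(0,q_m)$ with $\dist_{C_m}(q_m,v_m)=1$ (where $v_m$ is the vertex of $C_m$) such that the density of $\mathbb{R}^k\times C_m$ at $p_m$ tends to $\mathrm{AVR}(X)$. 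Since $\mathrm{AVR}(X)>0$, the pointed spaces $(\mathbb{R}^k\times C_m,\dist,\meas,p_m)$ are uniformly noncollapsed, hence precompact in the pmGH topology (\cref{rem:GromovPrecompactness}); let $(W,\dist_W,\mathcal{H}^n,p_\infty)$ be a limit, the limit measure being $\mathcal{H}^n$ by \cref{thm:volumeconvergence}. The vertices $(0,v_m)$, being at distance $1$ from $p_m$, converge to some $(0,v_\infty)\in W$; re-pointing $W$ there exhibits it as a pmGH limit of blow-downs of $X$, so by a diagonal argument together with stability of products under pmGH convergence $W=\mathbb{R}^k\times C_\infty$ with $C_\infty$ an asymptotic cone of $Y$ of vertex $v_\infty$, and $p_\infty=(0,q_\infty)$ with $\dist_{C_\infty}(q_\infty,v_\infty)=1$. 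By lower semicontinuity of the density under this convergence (again via \cref{thm:volumeconvergence} and Bishop--Gromov monotonicity) one gets $\vartheta(p_\infty)\le\mathrm{AVR}(X)=\mathrm{AVR}(W)$, while $\vartheta(p_\infty)\ge\mathrm{AVR}(W)$, hence $\vartheta(p_\infty)=\mathrm{AVR}(W)$. The rigidity in Bishop--Gromov (volume cone implies metric cone) then forces $W$ to be a metric cone with vertex $p_\infty$ as well as with vertex $(0,v_\infty)\ne p_\infty$; by the rigidity for $\RCD(0,n)$ metric cones with two distinct vertices, $W$ splits a line through these two points, which lies in $\{0\}\times C_\infty$ since the two points share the $\mathbb{R}^k$-coordinate $0$. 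Then $C_\infty$ contains a line, hence splits one by the splitting theorem, contradicting the hypothesis that no asymptotic cone of $Y$ splits a line.

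For item (ii) I would fix the subsequence realizing $X_\infty$ and, using translation invariance of $X$ along $\mathbb{R}^k$, assume $t_i=0$; set $\rho_i:=\dist_Y(y_i,y_0)\to+\infty$. After passing to a further subsequence, the rescalings $(X,\rho_i^{-1}\dist,\rho_i^{-n}\meas,x_0)$ converge to an asymptotic cone $\mathbb{R}^k\times C_\infty$ of $X$ (with $C_\infty$ an asymptotic cone of $Y$ of vertex $v_\infty$), while $x_i$, which lies at rescaled distance $1$ from $x_0$, converges to a point $p_\infty=(0,q_\infty)$ with $\dist_{C_\infty}(q_\infty,v_\infty)=1$, so $\vartheta(p_\infty)\ge\alpha$ by hypothesis (here $\vartheta(p_\infty)$ denotes the density of $\mathbb{R}^k\times C_\infty$ at $p_\infty$). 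It then remains to compare two blow-ups of $X$ at the moving base points $x_i$: the microscopic one at scales $s\rho_i$, $s\to 0$, which produces $\vartheta(p_\infty)$, and the macroscopic one at fixed scales $r$, which produces $\mathrm{AVR}(X_\infty)$; both are controlled by the single nonincreasing function $\rho\mapsto\meas(B_\rho(x_i))/(\omega_n\rho^n)$. For fixed $s,r>0$ one has $s\rho_i>r$ eventually, hence $\meas(B_{s\rho_i}(x_i))/(\omega_n(s\rho_i)^n)\le\meas(B_r(x_i))/(\omega_n r^n)$; passing to the limit $i\to\infty$ along continuity radii and invoking \cref{thm:volumeconvergence} on both sides gives $\meas_{\mathbb{R}^k}\otimes\meas_{C_\infty}(B_s(p_\infty))/(\omega_n s^n)\le\meas_\infty(B_r(x_\infty))/(\omega_n r^n)$ for a.e.\ $s,r>0$; letting $s\to 0$ and then $r\to+\infty$ yields $\alpha\le\vartheta(p_\infty)\le\mathrm{AVR}(X_\infty)$.

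The hard part is the rigidity step in item (i): passing from ``the blow-down limit $W$ is a metric cone with respect to two distinct vertices'' to ``an asymptotic cone of $Y$ splits a line''. This requires combining the nonsmooth volume-cone-to-metric-cone theorem with the fact that an $\RCD(0,n)$ cone over two distinct vertices splits a line along the geodesic joining them, and then observing that this line lives in the $C_\infty$-factor because the two vertices project to the same point of $\mathbb{R}^k$. The remaining ingredients---precompactness and volume convergence at moving base points, the identification of asymptotic cones of a product as products, and the bookkeeping of subsequences and a.e.\ radii in the double limits of item (ii)---are routine but should be carried out with some care.
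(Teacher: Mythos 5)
Your proposal is correct and follows essentially the same route as the paper's proof: for (i) a contradiction argument via precompactness of the sequence of asymptotic cones, a diagonal argument identifying the limit as an asymptotic cone of $X$, lower semicontinuity of the density combined with Bishop--Gromov monotonicity to force a constant volume ratio at the limit point, the volume-cone-to-metric-cone rigidity, and then the splitting of the cross-sectional cone from the existence of a second vertex off $\mathbb R^k\times\{v_\infty\}$; and for (ii) the same blow-down at $x_0$ after translating to $t_i=0$, followed by the comparison of the two scales through the monotone Bishop--Gromov ratio at the moving points and volume convergence. The one ingredient you invoke without proof (an $\RCD(0,n)$ cone admitting a second distinct vertex splits a line through the two vertices) is precisely what the paper imports from the literature (\cite[Proposition 1.18]{AntBruSem19}), so flagging it as an external fact is consistent with the paper's argument.
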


\begin{proof}
In the proof for simplicity we denote $\theta_X:=\mathrm{AVR}(X,\dist,\meas)$. Inductively using \cite[Theorem 7.4]{Gigli13}, one deduces that $(Y,\dist_Y,\meas_Y)$ is an $\RCD(0,n-k)$ space. Moreover, one can check that ${\rm AVR}(Y,\dist_Y,\meas_Y)>0$ as well.

\vspace{0.2cm}
Let us first prove item (i). Let us assume by contradiction that there exist, for every $i\in\mathbb N$, asymptotic cones $(C_i',\dist_i,\meas_i,(0,v_{\infty,i}))$ of $(X,\dist,\meas)$, and points $p_i\in C_i'$ such that $\dist_i(p_i,\mathbb R^k\times\{v_{\infty,i}\})=1$ and 
$$
\vartheta[(C_i',\dist_i,\meas_i,p_i)]\leq \theta_X+1/i.
$$
Recall that $C_i'=\mathbb R^k\times C_i$, for some cone $C_i$ with tip $v_{\infty,i}$. Notice that by translating along $\mathbb R^k$ we may assume that the $\mathbb R^k$ coordinate of $p_i$ is $0$.
Let us first notice that the sequence $\{(C_i',\dist_i,\meas_i,(0,v_{\infty,i}))\}_{i\in\mathbb N}$ is precompact in the pmGH topology. Indeed, since any $(C_i',\dist_i,\meas_i,(0,v_{\infty,i}))$ is the pmGH limit of a rescaling of $X$, the $C_i'$'s are $\RCD(0,n)$ spaces with $\meas_i(B_1(0,v_{\infty,i}))=\theta_X$ for every $i\in\mathbb N$, due to the volume convergence result in \cref{thm:volumeconvergence}. Hence precompactness follows from \cref{rem:GromovPrecompactness}. Moreover, any limit of such a sequence of asymptotic cones must be an asymptotic cone of $(X,\dist,\meas)$, due to the fact that the pmGH-topology, in our case, is metrizable, see \cite[Theorem 3.15]{GigliMondinoSavare15}. Indeed, by exploiting the latter information, a diagonal argument implies that any pmGH limit point of the sequence $\{(C_i',\dist_i,\meas_i,(0,v_{\infty,i}))\}_{i \in \N}$ is also an asymptotic cone of $X$.

Hence, let us take $(C',\dist_{C'},\meas_{C'},(0,v_{\infty}))$ an asymptotic cone of $(X,\dist,\meas)$, that is also a pmGH limit of the sequence $\{(C_i',\dist_i,\meas_i,(0,v_{\infty,i}))\}_{i\in\mathbb N}$. In a proper realization of such a pmGH limit, we can assume that, up to further subsequences, $p_i\to p\in C'$, and since $\dist_i(p_i,\mathbb R^k\times\{v_{\infty,i}\})=1$, we get by continuity of the distance that $\dist(p,\mathbb R^{k}\times\{v_{\infty}\})=1$, and hence $p\notin \mathbb R^k\times\{ v_{\infty}\}$. Since $(0,v_{\infty})$ is one tip of the asymptotic cone $C'$, from the volume convergence result in \cref{thm:volumeconvergence}, we conclude that 
\begin{equation}\label{eqn:QuellaCheConclude}
\meas_{C'}(B_r(0,v_{\infty}))=\theta_X\omega_nr^n,
\end{equation}
for every $r>0$. By an immediate consequence of Bishop--Gromov volume comparison and the volume convergence we also have that 
\begin{equation}\label{eqn:Maggiore}
\vartheta[(C',\dist_{C'},\meas_{C'},p)]\geq \theta_X.
\end{equation}
Moreover, since the density $\vartheta$ is lower semicontinuous with respect to the pmGH convergence, see \cite[Lemma 2.2 (i)]{DePhilippisGigli18}, we also conclude that 
\begin{equation}\label{eqn:Minore}
\vartheta[(C',\dist_{C'},\meas_{C'},p)]\leq\liminf_{i\to+\infty}\vartheta[(C_i',\dist_i,\meas_i,p_i)]\leq \theta_X.
\end{equation}
Hence, from \eqref{eqn:Maggiore} and \eqref{eqn:Minore}, we deduce that 
\begin{equation}\label{eqn:ThetaA0}
\lim_{r\to 0}\frac{\meas_{C'}(B_r(p))}{\omega_nr^n}=:\vartheta[(C',\dist_{C'},\meas_{C'},p)]=\theta_X.
\end{equation}
Since $p$ and $(0,v_{\infty})$ are at finite distance, from \eqref{eqn:QuellaCheConclude}, a simple argument involving the triangle inequality, and the monotonicity of Bishop--Gromov ratios (see \cref{rem:PerimeterMMS2}) we also deduce that 
\begin{equation}\label{eqn:ThetaAInfinito}
\lim_{r\to +\infty}\frac{\meas_{C'}(B_r(p))}{\omega_nr^n}=\theta_X.
\end{equation}
From \eqref{eqn:ThetaA0}, \eqref{eqn:ThetaAInfinito}, and the monotonicity of the Bishop--Gromov ratios, see \cref{rem:PerimeterMMS2}, we deduce that $\meas_{C'}(B_r(p))=\theta_X\omega_nr^n$ for every $r>0$. Hence, from the result in \cite[Theorem 1.1]{DePhilippisGigli16}, we conclude that $C'$ is a metric cone with tip $p\notin \mathbb R^k\times\{ v_{\infty}\}$. Hence $C'=\mathbb R^k\times C$ is also metric cone over a tip that is not in $\mathbb R^k\times\{v_\infty\}$, and then from \cite[Proposition 1.18]{AntBruSem19} we get that $C$ splits a line. But this is not possible, since $C$ is an asymptotic cone of $Y$ that by hypothesis does not split a line, thus giving the sought contradiction.

\vspace{0.2cm}
Let us now prove item (ii). Since $(X,\dist,\meas)$ is an $\RCD(0,n)$ space we conclude by stability that also $(X_\infty,\dist_\infty,\meas_\infty)$ is an $\RCD(0,n)$ space. Let us denote $\widetilde x_{i}:=(0,y_i)$ and observe that $(X,\dist,\meas,x_i)$ is isomorphic to $(X,\dist,\meas,\widetilde x_i)$. 
Then we may suppose that the pmGH limit $(X_\infty,\dist_\infty,\meas_\infty,x_\infty)$ is obtained through (a sub)sequence of $(X,\dist,\meas,\widetilde x_i)$.

Since we have that $\dist_Y(y_i,y_0)\to_i +\infty$, if we set $\rho_i:=\dist(x_0,\widetilde x_i)$, we get that, up to subsequences in $i$, $(X,\rho_i^{-1}\dist,\rho_i^{-n}\meas,x_0)$ pmGH-converges to an asymptotic cone \\ $(C',\dist_{C'},\meas_{C'},(0,v_{\infty}))$. Notice that $C'=\mathbb R^k\times C$ for some cone $C$ with tip $v_\infty$. Moreover, in a realization of such a convergence, $\widetilde x_i\to_{i} p$ such that $\dist_{C'}(p,\mathbb R^k\times\{z_\infty\})=1$ from the fact that both $x_0$ and $\widetilde x_i$ have zero component along $\mathbb R^k$. From the hypothesis we get that, for some $\alpha>0$, we have
$$
\lim_{r\to 0}\frac{\meas_{C'}(B_r(p))}{\omega_nr^n}\geq\alpha.
$$
Hence, for every $\eta$ there exists some $\delta:=\delta(\eta)>0$ for which
\begin{equation}\label{eqn:LEI1New}
\frac{\meas_{C'}(B_\delta(p))}{\omega_n\delta^n} \geq \alpha-\eta.
\end{equation}
Since $(X,\rho_i^{-1}\dist,\rho_i^{-n}\meas,\widetilde x_i)$ pmGH-converges to $(C',\dist_{C'},\meas_{C'},p)$, the volume convergence result in \cref{thm:volumeconvergence} implies that
\begin{equation}\label{eqn:LEI2New}
\lim_{i \to \infty} \frac{\meas(B_{\rho_i\delta}(\widetilde x_i))}{\omega_n(\rho_i\delta)^n}=\frac{\meas_{C'}(B_\delta(p))}{\omega_n\delta^n}.
\end{equation}
Hence, from \eqref{eqn:LEI1New} and \eqref{eqn:LEI2New} we get that 
\begin{equation}\label{eqn:LEI3New}
    \frac{\meas(B_{\rho_i\delta}(\widetilde x_i))}{\omega_n(\rho_i\delta)^n}\geq \alpha-2\eta, \qquad  \text{for all $i\geq i_0(\eta)$ large enough}.
\end{equation}
Let us fix $R>0$. From the volume convergence result in \cref{thm:volumeconvergence} we have that 
$$
\lim_{i\to \infty} \frac{\meas(B_R(\widetilde x_i))}{\omega_nR^n}= \frac{\meas_\infty(B_R(x_\infty))}{\omega_nR^n}.
$$
By Bishop--Gromov volume comparison on $X$ and \eqref{eqn:LEI3New} applied with $i\geq i_1(\eta,R)$ large enough, we have 
$$
\frac{\meas_\infty(B_R(x_\infty))}{\omega_nR^n}\geq \alpha-3\eta.
$$
Since $R,\eta>0$ are arbitrary we get the sought conclusion taking $R\to +\infty$ and then $\eta\to 0$.
\end{proof}

\begin{proof}[Conclusion of the proof of {\cref{cor:Princ2}}]
It is a direct consequence of \cref{lem:DensitaStaccata},
and \cref{thm:Principale}.
\end{proof}

\subsection{Further existence results}\label{sec:Further}

As a consequence of item (ii) of \cref{lem:DensitaStaccata}, we can derive further existence results on manifolds with nonnegative Ricci curvature and Euclidean volume growth. The next theorem states that if points located at distance $1$ from the tips of every asymptotic cone of a manifold have density $1$, then there exist isoperimetric regions of every volume. This is ultimately due to the fact that item (ii) of \cref{lem:DensitaStaccata} implies that the essentially relevant pGH limits at infinity are $\R^n$, allowing us to conclude by the main existence result in \cite{AFP21}.

\begin{thm}\label{thm:Further1}
Let $k\geq 0$ be a natural number, and let $(M^n,g)=(\mathbb R^k\times N^{n-k},g_{\mathbb R^k}+ g_N)$ be a complete Riemannian manifold such that $\ric\geq 0$ and $\mathrm{AVR}(M^n,g)>0$. Assume 
that for every asymptotic cone $(\mathbb R^k\times C,(0,v_\infty))$ of $M$ and every $p\in \mathbb R^k\times C$ with $\dist_{\mathbb R^k}\otimes\dist_C(p,\mathbb R^k\times\{z_\infty\})=1$, we have
\begin{equation}\label{eqn:DENSITYONE}
\lim_{r\to 0}\frac{\meas_{\mathbb R^k}\otimes\meas_ C(B_r(p))}{\omega_nr^n}=:\vartheta[(\mathbb R^k\times C,\dist_{\mathbb R^k}\otimes\dist_C,\meas_{\mathbb R^k}\otimes\meas_C, p)]=1.
\end{equation}
Then for every volume $V>0$ there exists an isoperimetric region of volume $V$ on $M^n$.
\end{thm}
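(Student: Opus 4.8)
The plan is to reduce \cref{thm:Further1} to the generalized existence result of \cite[Theorem 5.2]{AFP21}, by showing that \eqref{eqn:DENSITYONE} forces every pointed measured Gromov--Hausdorff limit of $(M,\dist,\vol)$ at infinity to be isometric either to $M$ itself or to $(\mathbb R^n,\dist_{\mathbb R^n},\meas_{\mathbb R^n})$; this is exactly the situation covered by the minor variation of \cite[Theorem 5.2]{AFP21} alluded to in \cref{subsec:example}. We may assume $(M^n,g)$ is not isometric to $\mathbb R^n$, since otherwise Euclidean balls are isoperimetric for every volume.

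To classify the limits at infinity, fix $x_0=(0,y_0)$ and let $\{x_i=(t_i,y_i)\}_i\subset M$ be an arbitrary sequence with $\dist(x_i,x_0)\to+\infty$; by \cref{rem:GromovPrecompactness} we may assume $(M,\dist,\vol,x_i)$ converges in the pmGH sense to a pointed $\RCD(0,n)$ space $(X_\infty,\dist_\infty,\meas_\infty,x_\infty)$, and, $M$ being noncollapsed, \cref{thm:volumeconvergence} gives $\meas_\infty=\mathcal H^n$. If $\sup_i\dist_N(y_i,y_0)<+\infty$, then translating by $-t_i$ along $\mathbb R^k$ (a measure-preserving isometry of $M$) identifies $(M,\dist,\vol,x_i)$ with $(M,\dist,\vol,(0,y_i))$, and, $N$ being complete, up to a subsequence $y_i\to\bar y$, so $X_\infty$ is isometric to $M$. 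If instead $\dist_N(y_i,y_0)\to+\infty$, then item~(ii) of \cref{lem:DensitaStaccata} applied with $\alpha=1$ — legitimate since \eqref{eqn:DENSITYONE} gives density exactly $1\ge 1$ at every point at distance $1$ from the tips of every asymptotic cone of $M$ — yields $\mathrm{AVR}(X_\infty)\ge 1$; on the other hand $(X_\infty,\dist_\infty,\mathcal H^n)$ is $\RCD(0,n)$, so $\mathcal H^n(B_r(z))\le\omega_n r^n$ for all $z,r$ by \cref{rem:PerimeterMMS2}, hence $\mathrm{AVR}(X_\infty)\le 1$. Thus $\mathrm{AVR}(X_\infty)=1$, and Bishop--Gromov monotonicity forces $\mathcal H^n(B_r(z))=\omega_n r^n$ for all $z,r$; by the volume-cone to metric-cone rigidity \cite{DePhilippisGigli16}, $X_\infty$ is a metric cone with respect to each of its points, and then, splitting off a line for each pair of distinct tips exactly as in the proof of item~(i) of \cref{lem:DensitaStaccata} and using $\dim X_\infty=n$, $X_\infty$ is isometric to $\mathbb R^n$. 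So every pmGH limit of $(M,\dist,\vol)$ at infinity is isometric to $M$ or to $\mathbb R^n$.

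Finally I would run the direct method. Fix $V>0$ and a minimizing sequence $\{\Omega_i\}$ of bounded finite perimeter sets of volume $V$, and apply \cref{thm:MassDecomposition}: one gets the isoperimetric region $\Omega\subset M$ with $\Omega_i^c\to\Omega$, the escaping points $p_{i,j}$, the pmGH limits $X_j$ and the isoperimetric regions $Z_j\subset X_j$. Each $X_j$ is isometric to $M$ or to $\mathbb R^n$ by the previous step. For the indices with $X_j\cong M$, $Z_j$ is a bounded isoperimetric region of $M$ by \cref{cor:IsopBounded}, and translating along $\mathbb R^k$ we may assume $\Omega$ and all such $Z_j$ are pairwise disjoint in $M$. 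For the indices with $X_j\cong\mathbb R^n$, $Z_j$ is a Euclidean ball; if their total volume $w\eqdef\sum_{X_j\cong\mathbb R^n}\meas_j(Z_j)$ were positive, then \eqref{eq:UguaglianzeIntro}, the subadditivity of $I_{(M^n,g)}$ (from its concavity, \cref{thm:Concavity}), and the bound $I_{(M^n,g)}(w)\le n\omega_n^{1/n}w^{(n-1)/n}$ (obtained by comparison with geodesic balls via \eqref{3}) would give
\begin{equation*}
I_{(M^n,g)}(V)=P(\Omega)+\textstyle\sum_j P_{X_j}(Z_j)\ \ge\ I_{(M^n,g)}(V-w)+n\omega_n^{1/n}w^{\frac{n-1}{n}}\ \ge\ I_{(M^n,g)}(V),
\end{equation*}
so that all inequalities are equalities; as in \cite[Theorem 5.1 \& 5.2]{AFP21} and \cite[Theorem 3.5]{MorganJohnson00} this equality case contradicts the minimality of $\{\Omega_i\}$, because losing mass into a Euclidean factor is never isoperimetrically convenient on a manifold with $\ric\ge 0$ not isometric to $\mathbb R^n$. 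Hence $w=0$, and $\Omega\sqcup\bigsqcup_j Z_j$ (with the $Z_j$ translated as above) is an isoperimetric region of volume $V$. The delicate point — and the main obstacle — is precisely this last exclusion of Euclidean escape, where the argument genuinely leans on (the proof of) \cite[Theorem 5.2]{AFP21}; everything else is the bookkeeping of limits at infinity through item~(ii) of \cref{lem:DensitaStaccata} and the $\RCD(0,n)$ volume-rigidity package.
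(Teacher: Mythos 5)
Your proposal is correct and takes essentially the same route as the paper's proof: item~(ii) of \cref{lem:DensitaStaccata} with $\alpha=1$, Bishop--Gromov, and the $\mathrm{AVR}=1$ rigidity (which the paper just cites from \cite{DePhilippisGigli18} where you re-derive it via volume-cone-to-metric-cone and splitting) show that limits at infinity along $N$ are flat $\R^n$, while limits with bounded $N$-component are $M$ itself and their mass is recovered by translating along $\R^k$. The final exclusion of mass escaping into Euclidean limits is deferred, exactly as in the paper, to (a minor modification of) the proof of \cite[Theorem 5.2]{AFP21} together with the Morgan--Johnson-type strict inequality, so your argument matches the paper's in both structure and level of detail.
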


\begin{proof}
We are in the setting of item (ii) of \cref{lem:DensitaStaccata} with $\alpha=1$. 
From this we deduce that for any pmGH-limit $X_\infty$ at infinity as in the statement of item (ii) of \cref{lem:DensitaStaccata} we have that $\mathrm{AVR}(X_\infty,\dist_\infty,\meas_\infty)\geq 1$. But since $\mathrm{AVR}(X_\infty,\dist_\infty,\meas_\infty)\leq 1$ due to Bishop--Gromov comparison and the fact that $\meas_\infty=\mathcal{H}^n_{d_\infty}$ (cf. \cref{thm:volumeconvergence}, and \cref{rem:PerimeterMMS2}), we have $\mathrm{AVR}(X_\infty,\dist_\infty,\meas_\infty)= 1$. Therefore $(X_\infty,\dist_\infty,\meas_\infty)=(\mathbb R^n,\dist_{\mathbb R^n},\meas_{\mathbb R^n})$ as a consequence of \cite[Corollary 1.7]{DePhilippisGigli18} and the Bishop--Gromov comparison discussed in \cref{rem:PerimeterMMS2}.

Now if $k=0$, this implies that $(M^n, g)$ is GH-asymptotic to flat $\mathbb R^n$ (see \cite[Definition 1.2]{AFP21}), and therefore on $M^n$ we have isoperimetric regions for every volume by \cite[Theorem 5.2]{AFP21}.

More generally, if $k>0$, we can still conclude that on $(M^n, g)$ we have isoperimetric regions of every volume $V>0$ arguing like at the end of the proof of \cref{thm:Principale}. Indeed, pGH limits along sequences $(x_i,y_i)$ with $\{y_i\}_i$ bounded in $N$ are isometric to $(M^n,g)$, and then possible mass of a minimizing sequence lost along these sequence can be obviously recovered as in the proof of \cref{thm:Principale}. On the other hand we obtained that pGH limits obtained along sequences diverging along $N^{n-k}$ are flat $\R^n$, and then a minor modification in the proof of \cite[Theorem 5.2]{AFP21} immediately shows that leak of the mass of a minimizing sequence along such sequences would lead to a contradiction of the minimality assumption on the sequence.
\end{proof}

We record a straightforward consequence of the above statement.

\begin{cor}\label{cor:FurtherExistence}
Let $(M^n,g)$ be a complete Riemannian manifold such that $\ric\geq 0$ and $\mathrm{AVR}(M^n,g)>0$.

Assume that every asymptotic cone of $(M^n, g)$ is \emph{smooth outside the tip}, i.e., it is of the form $C(Z)$ where $Z$ is a smooth closed manifold. Then for every volume $V>0$ there exists an isoperimetric region of volume $V$ on $M^n$.
\end{cor}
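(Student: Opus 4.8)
The plan is to deduce \cref{cor:FurtherExistence} directly from \cref{thm:Further1}, by checking that the density hypothesis \eqref{eqn:DENSITYONE} holds under the stated assumption on the asymptotic cones. First I would fix the splitting $(M^n,g)=(\mathbb R^k\times N^{n-k},g_{\mathbb R^k}+g_N)$ with $(N^{n-k},g_N)$ not splitting a line, exactly as in \cref{thm:Further1}. Since $\mathrm{AVR}(M^n,g)>0$ the manifold $M$ is noncollapsed, so $(M,\dist,\vol)$ is an $\RCD(0,n)$ space with $\vol=\mathcal H^n$, and by the stability of product structures (and of the Euclidean factor $\mathbb R^k$) under the rescalings defining asymptotic cones, every asymptotic cone of $M$ is isometric to $\mathbb R^k\times C$, where $C$ is an asymptotic cone of $N$; writing $C=C(W)$ for its cross section and $v_\infty$ for its tip, the set of cone points of $\mathbb R^k\times C$ is exactly $\mathbb R^k\times\{v_\infty\}$.

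Next I would use the hypothesis. Saying that this asymptotic cone $\mathbb R^k\times C$ is smooth outside the tip, i.e.\ of the form $C(Z)$ with $Z$ a smooth closed Riemannian manifold, means precisely that the defining warped cone metric $\d r^2+r^2 g_W$ is a genuine smooth Riemannian metric on $(0,+\infty)\times W$; consequently $\mathbb R^k\times C$ is a smooth $n$-dimensional Riemannian manifold on the open set $(\mathbb R^k\times C)\setminus(\mathbb R^k\times\{v_\infty\})$. Then I would fix any $p\in\mathbb R^k\times C$ with $\dist_{\mathbb R^k}\otimes\dist_C\bigl(p,\mathbb R^k\times\{v_\infty\}\bigr)=1$; such a $p$ lies in the smooth open set above, so a neighbourhood of $p$ in $\mathbb R^k\times C$ is an ordinary Riemannian manifold.

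It then follows that the unique measured Gromov--Hausdorff tangent of $\mathbb R^k\times C$ at $p$ is $(\mathbb R^n,\dist_{\mathbb R^n},\meas_{\mathbb R^n})$ and, by the classical expansion of the volume of small geodesic balls at a smooth point (equivalently, by the volume convergence theorem \cref{thm:volumeconvergence} together with the identification of the reference measures with $\mathcal H^n$, cf.\ \cref{rem:PerimeterMMS2}),
\[
\lim_{r\to 0}\frac{\meas_{\mathbb R^k}\otimes\meas_C(B_r(p))}{\omega_n r^n}=1 .
\]
Thus $M$ satisfies hypothesis \eqref{eqn:DENSITYONE} of \cref{thm:Further1}, and the existence of isoperimetric regions of every positive volume on $M^n$ follows at once.

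I do not expect any real obstacle here. The only point requiring a little care is the bookkeeping in the first step: recognizing the asymptotic cones of $M$ as products $\mathbb R^k\times C$ with $C$ an asymptotic cone of $N$, and identifying their locus of cone points with $\mathbb R^k\times\{v_\infty\}$, so that the ``smooth outside the tip'' assumption translates exactly into smoothness of $\mathbb R^k\times C$ away from $\mathbb R^k\times\{v_\infty\}$. Once this is in place, the density computation at a distance-one point and the appeal to \cref{thm:Further1} are immediate.
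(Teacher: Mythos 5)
Your proposal is correct and follows exactly the paper's route: the paper also deduces the corollary by observing that smoothness of every asymptotic cone outside the tip makes the density hypothesis \eqref{eqn:DENSITYONE} hold at all points at distance $1$ from $\mathbb R^k\times\{v_\infty\}$, and then invokes \cref{thm:Further1}. The only difference is that you spell out the density computation (smooth point $\Rightarrow$ Euclidean tangent $\Rightarrow$ density one via volume convergence), which the paper leaves as ``clearly satisfied.''
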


\begin{proof}
If every asymptotic cone of $M$ is smooth outside the tip, then \eqref{eqn:DENSITYONE} is clearly satisfied, and thus the existence of isoperimetric regions follows.
\end{proof}

Let us observe that, in the setting of \cref{cor:FurtherExistence}, if $n=2$ then every asymptotic cone is (isometric to) a cone over a circle of some radius, which is obviously smooth outside the tip. Hence the latter statement, when specialized to $n=2$, recovers a particular case of the existence result of \cite{RitoreExistenceSurfaces01} on surfaces with nonnegative curvature.

\subsection{Proof of {\cref{thm1}}}
As anticipated in the Introduction, for the proof of \cref{thm1} we need to show that manifolds with nonnegative sectional curvature have a unique asymptotic cone that splits if and only if the manifold splits.
This is a standard result in the field, but since we were not able to find a satisfactory reference in the literature, we add a short proof for the readers' convenience.
We first prove an auxiliary lemma.

\begin{lemma}\label{lemma:surjectivenonexpansive}
Let $(X,\dist_X)$ and $(Y,\dist_Y)$ be compact metric spaces. If there exist maps $\Phi:X\to Y$ and $\Psi: Y \to X$ surjective and $1$-Lipschitz, then $(X,\dist_X)$ and $(Y,\dist_Y)$ are isometric.
\end{lemma}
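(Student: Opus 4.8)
The plan is to show that both $\Phi$ and $\Psi$ are in fact isometries. The key idea is to consider the composition $F := \Psi \circ \Phi : X \to X$, which is a surjective $1$-Lipschitz self-map of the compact metric space $(X,\dist_X)$, and to invoke the classical fact that a surjective $1$-Lipschitz (equivalently, non-expansive) self-map of a compact metric space is automatically an isometry. This is a standard rigidity statement: if $F:X\to X$ is $1$-Lipschitz, surjective, and $(X,\dist_X)$ is compact, then $\dist_X(F(x),F(y)) = \dist_X(x,y)$ for all $x,y\in X$. I would either cite this (e.g.\ from Burago--Burago--Ivanov, \emph{A Course in Metric Geometry}, Theorem 1.6.15) or include the short proof: if $F$ strictly contracted some pair $(a,b)$, then iterating $F$ and using compactness to extract a convergent subsequence of $(F^{k}(a), F^{k}(b))$ one produces, via the surjectivity to reach arbitrary preimages, a contradiction with the fact that the diameter-type quantities $\sup\{\dist_X(F^k(x),F^k(y))\}$ cannot increase; the cleanest route is the Baire/accumulation argument showing $F$ is surjective $\Rightarrow$ $F$ is an isometric bijection.

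Granting that, $F = \Psi\circ\Phi$ is an isometry of $X$; symmetrically, $G := \Phi \circ \Psi : Y \to Y$ is an isometry of $Y$. From $F$ being injective we deduce that $\Phi$ is injective, and since $\Phi$ is already assumed surjective, $\Phi$ is a bijection. Now for any $x,x'\in X$, writing $y=\Phi(x)$, $y'=\Phi(x')$, we have on one hand $\dist_Y(y,y') \le \dist_X(x,x')$ since $\Phi$ is $1$-Lipschitz, and on the other hand $\dist_X(x,x') = \dist_X(F^{-1}(\Psi(y)), F^{-1}(\Psi(y'))) = \dist_X(\Psi(y),\Psi(y')) \le \dist_Y(y,y')$, using that $F$ is an isometry (so $F^{-1}$ is too) and that $\Psi$ is $1$-Lipschitz. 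Combining the two inequalities gives $\dist_Y(\Phi(x),\Phi(x')) = \dist_X(x,x')$, i.e.\ $\Phi$ is a distance-preserving bijection, hence an isometry, and $(X,\dist_X)$ and $(Y,\dist_Y)$ are isometric.

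The main obstacle is purely the rigidity input — the assertion that a surjective non-expansive self-map of a compact metric space is an isometry — which is the one non-formal step; everything after it is elementary bookkeeping with the triangle-type inequalities above. I would state that fact as a cited black box (or prove it in two lines via the standard argument: pick any $x,y$; by compactness choose $n_k\to\infty$ with $F^{n_k}(x)\to a$, $F^{n_k}(y)\to b$; then the non-expansiveness forces $\dist_X(a,b) \le \liminf \dist_X(F^{n_k}(x),F^{n_k}(y)) \le \dist_X(x,y)$, while surjectivity lets one also bound $\dist_X(x,y)$ below by $\dist_X(a,b)$ up to $\varepsilon$, giving equality along the chain and hence $\dist_X(F(x),F(y))=\dist_X(x,y)$), and then spend the bulk of the short proof on extracting the conclusion for $\Phi$ itself.
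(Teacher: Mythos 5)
Your reduction is the same as the paper's: both arguments observe that it suffices to show that the composition $F:=\Psi\circ\Phi$, a surjective $1$-Lipschitz self-map of the compact space $X$, preserves distances, after which the chain $\dist_X(x,x')=\dist_X(F(x),F(x'))\le \dist_Y(\Phi(x),\Phi(x'))\le \dist_X(x,x')$ shows that $\Phi$ is a distance-preserving surjection, hence an isometry; your detour through injectivity of $\Phi$ and $F^{-1}$ is just a longer version of this chain. The only genuine difference is how the rigidity input --- a surjective $1$-Lipschitz self-map of a compact metric space is an isometry --- is handled: the paper proves it from scratch by an extremal argument (minimizing, among finite $\eps/4$-dense sets $S$, the number of pairs at mutual distance at least $\dist_X(x,y)-\eps/2$, and using that $F(S)$ is again $\eps/4$-dense), whereas you invoke it as a classical black box from \cite{BuragoBuragoIvanovBook}. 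Citing it is perfectly legitimate and yields a shorter write-up; the paper's choice simply makes the lemma self-contained.

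One caveat: the ``two-line'' fallback proof you sketch is not correct as written. Extracting limits $F^{n_k}(x)\to a$, $F^{n_k}(y)\to b$ of the forward orbit only gives $\dist_X(a,b)\le \dist_X(x,y)$, and the assertion that ``surjectivity lets one bound $\dist_X(x,y)$ below by $\dist_X(a,b)$ up to $\eps$'' is unsubstantiated: surjectivity gives no control on forward images, and that inequality is essentially the statement to be proved. A workable iteration argument runs in the opposite direction: assuming $\dist_X(F(x),F(y))<\dist_X(x,y)-\delta$, use surjectivity to choose $p_n\in X\times X$ with $(F\times F)^n(p_n)=(x,y)$, so that $\dist_X$ evaluated along the forward orbit of any limit point $q$ of $(p_{n_k})$ never drops below $\dist_X(x,y)$; then $1$-Lipschitzianity gives $(F\times F)^{n_k}(q)\to(x,y)$, so $(F\times F)^{n_k+1}(q)$ eventually has $\dist_X$-value below $\dist_X(x,y)-\delta/2$, a contradiction. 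So either cite the result, or prove it by an argument of this type (or by the paper's counting argument); do not rely on the sketch as stated.
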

\begin{proof}
It is enough to check that $T:=\Psi\circ \Phi$ is an isometry, indeed from
\begin{equation*}
   \dist_X(x,y) = \dist_X(T(x),T(y)) \le \dist_Y(\Phi(x),\Phi(y)) \le \dist_X(x,y) 
\end{equation*}
we deduce that $\Phi$ is an isometry.

Fix distinct $x,y\in X$ and $\eps\in(0,\dist_X(x,y))$. Since $T$ is $1$-Lipschitz, it is enough to prove that $\dist_X(T(x),T(y)) \ge \dist_X(x,y)-\eps$ and then let $\eps\to0$.  Set $D:= \dist_X(x,y)-\eps/2$. 
Given an $\eps/4$-dense set $S \subset X$, i.e., such that for any $z\in X$ there exists $s\in S$ with $\dist_X(z,s)\le \eps/4$, we define $N(S):= \#\{(s_1,s_2)\in S\times S\, : \dist_X(s_1,s_2) \ge D\}$. Let $S_0$ be an $\eps/4$-dense set that minimizes the function $N(\cdot)$ among $\eps/4$-dense sets. Since $T$ is $1$-Lispchitz, then $\dist_X(T(s_1),T(s_2))\le \dist_X(s_1,s_2)$ for any $s_1,s_2\in S_0$, and since $T$ is also surjective, then $T(S_0)$ is $\eps/4$-dense, therefore $N(T(S_0))\ge N(S_0)$. This forces $\dist_X(T(s_1),T(s_2))\ge D$  for any $s_1,s_2\in S_0$ with $\dist_X(s_1,s_2)\ge D$. In particular, if we pick $s_1,s_2\in S_0$ such that $\dist_X(x,s_1)\le \eps/4$ and $\dist_X(y,s_2)\le \eps/4$ we have $\dist_X(s_1,s_2)\ge  -\eps/2 + \dist(x,y) =D $, and therefore, as $T$ is $1$-Lipschitz, we conclude
\begin{equation*}
    \dist_X(T(x),T(y)) \ge \dist_X(T(s_1), T(s_2)) - \eps/2
                       \ge D-\eps/2
                       = \dist(x,y) - \eps\, .
\end{equation*}
\end{proof}

\begin{thm}\label{lem:IfAndOnlyIfSplit}
If $(M^n,g)$ has nonnegative sectional curvature and Euclidean volume growth, then there exists a unique asymptotic cone $(C, \dist,\mathcal{H}^n)$. Moreover $C$ splits if and only if $M$ splits.
\end{thm}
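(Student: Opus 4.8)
The plan is to establish uniqueness of the asymptotic cone first, and then deduce the splitting equivalence from it.

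\textbf{Uniqueness.} Recall from \cref{def:AsymptoticCone} that every asymptotic cone of $M$ is a metric cone which is an $\RCD(0,n)$ space, and that, $M$ being noncollapsed (Euclidean volume growth), the volume convergence \cref{thm:volumeconvergence} forces the reference measure of any asymptotic cone $C$ to be $\mathcal H^n$ and $\mathrm{AVR}(C)=\mathrm{AVR}(M^n,g)$. In particular any two asymptotic cones $C(Z_1),C(Z_2)$ have cross-sections with the same finite positive $\mathcal H^{n-1}$-measure, and it suffices to prove $Z_1$ and $Z_2$ are isometric. As a preliminary I would record that, by the soul theorem of Cheeger and Gromoll, Euclidean volume growth forces the soul of $M$ to be a point $o$: if the soul $S$ had dimension $k\ge 1$, then, $S$ being compact and totally geodesic, a Rauch comparison estimate on the normal exponential map of $S$ gives $\vol(B_r(o))\le C r^{n-k}$ for large $r$, contradicting $\mathrm{AVR}(M^n,g)>0$. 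Hence $\exp_o\colon T_oM\to M$ is a diffeomorphism; being injective, $M$ has empty cut locus from $o$, so every geodesic issuing from $o$ is a ray realizing the distance to $o$.

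\textbf{Comparing cross-sections.} For $0<r<r'$ consider the radial projection $\Phi_{r',r}\colon S_{r'}(o)\to S_r(o)$, $\exp_o(r'v)\mapsto\exp_o(rv)$ for unit $v\in T_oM$; by the previous step it is a bijection. If $v_1,v_2$ are unit vectors with angle $\theta$ and $\tilde\theta(s,t)$ denotes the Euclidean comparison angle at $o$ of the triangle $o,\exp_o(sv_1),\exp_o(tv_2)$, then $\sect\ge 0$ and the Toponogov hinge comparison give $\tilde\theta(s,t)\le\theta$ together with monotonicity of $\tilde\theta$ in each variable; since $\dist(\exp_o(\rho v_1),\exp_o(\rho v_2))=2\rho\sin(\tilde\theta(\rho,\rho)/2)$ and $\tilde\theta(r,r)\ge\tilde\theta(r',r')$, we get $\tfrac1r\dist(\Phi_{r',r}(q_1'),\Phi_{r',r}(q_2'))\ge\tfrac1{r'}\dist(q_1',q_2')$, i.e. the inverse bijection $\Phi_{r',r}^{-1}\colon(S_r(o),\tfrac1r\dist)\to(S_{r'}(o),\tfrac1{r'}\dist)$ is $1$-Lipschitz. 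Writing $C(Z_1)=\lim_i(M,r_i^{-1}\dist,o)$, $C(Z_2)=\lim_i(M,s_i^{-1}\dist,o)$, after passing to subsequences with $r_1\le s_1\le r_2\le s_2\le\cdots$, the rescaled unit distance spheres converge to $Z_1$ and $Z_2$ respectively (the metric unit sphere about the vertex of $C(Z)$ being isometric to $Z$), and the equi-Lipschitz bijections $\Phi_{s_i,r_i}^{-1}$, $\Phi_{r_{i+1},s_i}^{-1}$ subconverge to $1$-Lipschitz surjections $Z_1\to Z_2$ and $Z_2\to Z_1$. Then \cref{lemma:surjectivenonexpansive} gives $Z_1\cong Z_2$, proving uniqueness of $(C,\dist,\mathcal H^n)$.

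\textbf{$M$ splits $\Rightarrow$ $C$ splits.} If $(M^n,g)=(\R\times M',g_\R+g_{M'})$, then $M'$ is complete with $\sect\ge 0$, noncollapsed, and an elementary computation with the formula for the volume of a ball in a Riemannian product gives $\mathrm{AVR}(M')=\mathrm{AVR}(M^n,g)>0$. Hence $(M',r_i^{-1}\dist_{M'},o')$ is pmGH-precompact, while $(\R,r_i^{-1}\dist_\R)=\R$, so by stability of products under pmGH convergence $(M,r_i^{-1}\dist,o)$ subconverges to $\R\times C'$ for $C'$ an asymptotic cone of $M'$; by uniqueness $C=\R\times C'$ splits.

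\textbf{$C$ splits $\Rightarrow$ $M$ splits.} If $C=C(Z)$ splits, then, $C$ being a metric cone with vertex $o_\infty$, the splitting forces the line to pass through $o_\infty$, say $\gamma_\infty^+\cup\gamma_\infty^-$ with $\gamma_\infty^\pm$ rays from $o_\infty$. Picking $p_i^\pm\in M$ converging to $\gamma_\infty^\pm(1)$ in the convergence $(M,r_i^{-1}\dist,o)\to C$ gives $\dist(o,p_i^\pm)/r_i\to 1$ and $\dist(p_i^+,p_i^-)/r_i\to 2$, so the comparison angle at $o$ of the triangle $o\,p_i^+\,p_i^-$ tends to $\pi$; the unit-speed minimizing geodesics from $o$ to $p_i^\pm$ subconverge to rays $\gamma^\pm$ from $o$, and monotonicity of comparison angles forces the comparison angle at $o$ between $\gamma^+$ and $\gamma^-$ at every pair of finite parameters to equal $\pi$, i.e. $\dist(\gamma^+(s),\gamma^-(t))=s+t$ for all $s,t\ge 0$. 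Thus $\gamma^+\cup\gamma^-$ is a line, and since $\ric\ge 0$ the Cheeger--Gromoll splitting theorem gives $(M^n,g)=(\R\times M',g_\R+g_{M'})$. The main obstacle here is the uniqueness statement: the crucial inputs are the monotonicity of Toponogov comparison angles and the absence of a cut locus from the soul point, which together make the radial projections honest bijections so that \cref{lemma:surjectivenonexpansive} applies; once uniqueness is in hand, both implications of the splitting equivalence follow quickly, the second via the Toponogov lifting of a line in the cone and the classical splitting theorem.
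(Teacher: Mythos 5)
Your splitting equivalence is essentially sound: the direction ``$M$ splits $\Rightarrow C$ splits'' via stability of products is exactly what the paper does, and your argument for ``$C$ splits $\Rightarrow M$ splits'' (lift two antipodal points on the line in the cone, let the minimizing geodesics from the basepoint subconverge to rays, and use monotonicity of Toponogov comparison angles to see the two limit rays fit together into a line, then apply the splitting theorem) is a correct, slightly cleaner reorganization of the paper's quantitative $\eps$-argument.

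The uniqueness part, however, has a genuine gap. You deduce from the soul theorem that the soul is a point $o$ and then assert that ``hence $\exp_o\colon T_oM\to M$ is a diffeomorphism'', i.e.\ that $o$ is a pole with empty cut locus. The soul theorem gives a diffeomorphism $M\cong\R^n$ (via the normal bundle of the soul / Sharafutdinov-type retractions), \emph{not} via $\exp_o$; having a point soul does not imply that minimizing geodesics from $o$ are unique, nor that every minimizing segment from $o$ extends to a longer minimizing segment. Both properties are exactly what you need for the radial projections $\Phi_{r',r}\colon S_{r'}(o)\to S_r(o)$ to be well-defined bijections: non-uniqueness of minimizers destroys well-definedness/injectivity, and surjectivity onto $S_r(o)$ requires that through every point of $S_r(o)$ the minimizing geodesic from $o$ keeps minimizing up to distance $r'$, which fails at cut points. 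Moreover the stronger claim implicitly being used --- that nonnegative sectional curvature and Euclidean volume growth force the existence of a pole --- is not true in general (think of the boundary of a convex body asymptotic to a cone with two separated regions of positive curvature: geodesics from any point straddling one of the curved regions get focused and cross, so no point is a pole), so this step cannot be repaired by a better citation. The paper avoids the issue entirely: it defines the radial contraction only on the full-measure set $E$ of points admitting a unique minimizing geodesic to $p$, uses Toponogov only to get that the (densely defined) inverse is $1$-Lipschitz, extends it to a $1$-Lipschitz surjection from a compact subset $K$ of the closed unit ball of one cone onto the closed unit ball of the other, and then invokes the volume convergence of \cref{thm:volumeconvergence} together with the equality of the $\mathcal H^n$-measures of the two unit balls (both equal $\omega_n\,\mathrm{AVR}(M^n,g)$) to force $K$ to be the whole ball, before applying \cref{lemma:surjectivenonexpansive}. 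Some measure-theoretic device of this kind (or another substitute for the pole property) is needed in your argument; as written, the construction of the bijections between distance spheres, and hence of the $1$-Lipschitz surjections between the cross-sections $Z_1$ and $Z_2$, does not go through.
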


\begin{proof}
Let us begin by proving the uniqueness of asymptotic cones. 

Thanks to \cref{lemma:surjectivenonexpansive}, it is enough to show that, given two asymptotic cones $(C_1, \dist_1, \meas_1)$ and $(C_2, \dist_2, \meas_2)$, there exists a surjective $1$-Lipschitz map $\Psi: \overline B_1(z_1) \to \overline B_1(z_2)$, where $z_1\in C_1$ and $z_2\in C_2$ are tip points.

Fix $p\in M$. Given $1<R_1<R_2$ we consider the set $E\subset B_{R_2}(p)$ of those points $x\in B_{R_2}(p)$ such that there exists a unique unit speed geodesic $\gamma_{p,x}:[0,\dist(x,p)]\to M$ with $\gamma_{p,x}(0)=p$, $\gamma_{p,x}(\dist(x,p))=x$.
Notice that $\vol(B_{R_2}\setminus E)=0$, hence $E\subset B_{R_2}(p)$ is dense.

We then define $T: E\subset B_{R_2}(p) \to B_{R_1}(p)$ as $T(x):=\gamma_{p,x}(\dist(x,p)R_1/R_2)$. Toponogov's theorem implies
\begin{equation}
   \dist(T(x), T(y)) R_1^{-1} \ge  \dist(x,y) R_2^{-1}
   \quad 
   \text{for any $x,y\in E$},
\end{equation}
in particular $T^{-1} : (T(E)\subset B_{R_1}(p), \dist/R_1) \to (B_{R_2}(p), \dist/R_2)$ is $1$-Lipschitz and has dense image, hence it can be extended to a surjective $1$-Lipschitz map $\Psi_{R_1,R_2}: (K_{R_1,R_2}, \dist/R_1) \to (\overline B_{R_2}(p), \dist/R_2)$, where $K_{R_1,R_2}:=\overline{T(E)}$.

Let us now consider two sequences $r_i\to \infty$ and $s_i\to \infty$ realizing $(C_1, \dist_1, \meas_1)$ and $(C_2, \dist_2, \meas_2)$, respectively.
We assume without loss of generality that $ s_i < r_i < s_{i+1}<r_{i+1}$ for any $i\ge 1$. By the Gromov--Hausdorff convergence of the sequences of the rescaled space to the asymptotic cones, by a classical Ascoli--Arzel\`{a}-type argument we can pass to the limit the maps $\Psi_{s_i,r_i}:((K_{s_i,r_i}, \dist /s_i) \to (\overline B_{r_i}(p), \dist/r_i)$ and we derive the existence of a $1$-Lipschitz surjective map
\begin{equation}\label{z6}
    \Psi : K\subset \overline B_1(z_1) \to \overline B_1(z_2) \, ,
\end{equation}
where $z_1\in C_1$ and $z_2\in C_2$ are tip points, and $K$ is compact.
Since $(M^n,g)$ has Euclidean volume growth, the volume convergence theorem, see \cref{thm:volumeconvergence}, guarantees that the reference measures on the asymptotic cones is the Hausdorff measure (with respect to their own distance), and $\mathcal{H}^n((B_1(z_1)) = \meas_1(B_1(z_1)) = \omega_n{\rm AVR}(M^n,g) = \meas_2(B_1(z_2)) = \mathcal{H}^n(B_1(z_2))$, hence
\begin{align*}
\mathcal{H}^n((B_1(z_1))\setminus K) & = \mathcal{H}^n(B_1(z_1))-\mathcal{H}^n(K) 
\\& = \mathcal{H}^n(B_2(z_2)) -\mathcal{H}^n(K)
\\& 
\le \mathcal{H}^n(B_2(z_2)) -\mathcal{H}^n(\Psi(K))
\\& =0 \, ,
\end{align*}
which yields $K=\overline B_1(z_1)$ since $K$ is closed. In particular \eqref{z6} provides the sought map.

\medskip

Let us now prove that the asymptotic cone $(C,\dist, \mathcal{H}^n)$ splits if and only if $M$ splits. In view of the splitting theorem it is enough to show the following implication
\begin{equation}
    C \, \text{contains a line} \implies M \, \text{contains a line,}
\end{equation}
since the other one is readily verified by stability of the product structure under GH-convergence.
Fix $p\in M$. It is enough to show that for any $0<\eps<1/9$ there exist $p_1,p_2\in M$ satisfying $\dist(p_1,p)=\dist(p_2,p) = 1/\eps$ and 
\begin{equation}\label{z222}
    \dist(p, \gamma_{p,p_1}(s)) + \dist(p, \gamma_{p,p_2}(s))
    \le \dist(\gamma_{p,p_1}(s), \gamma_{p,p_2}(s)) + \eps
    \quad \text{for any $s\in (0,1)$}\, ,
\end{equation}
where $\gamma_{p,p_i}:[0,1]\to M$ is a minimizing constant speed geodesic such that $\gamma(0)=p$ and $\gamma(1)=p_i$, for $i=1,2$.

Indeed the curve $\gamma_\eps:(-1/\eps, 1/\eps)\to M$, defined as
\begin{equation}
\gamma_\eps:(-1/\eps, 1/\eps)
     :=
    \begin{cases}
    \gamma_{p,p_1}(-t\eps) & \text{for $t\in (-1/\eps, 0)$}
    \\
    \gamma_{p,p_2}( t\eps) & \text{for $t\in (0, 1/\eps)$}\, ,
    \end{cases}
\end{equation}
is $1$-Lipschitz and satisfies $\gamma_\eps(0)=p$.
Hence, up to extracting a subsequence, $\gamma_\eps \to \gamma$ locally uniformly, where $\gamma:\R \to M$. We claim that $\gamma$ is a line. To see this, we fix $t>0$ and we show that
$\dist(\gamma(t),\gamma(-t)) = 2t$. By \eqref{z222} we have
\[
\begin{split}
    2t &= \dist(p, \gamma_\eps(-t)) + \dist(p,\gamma_\eps(t)) \le  \dist(\gamma_\eps(-t), \gamma_\eps(t)) + \eps \le \dist(\gamma_\eps(-t),p ) + \dist(p, \gamma_\eps(t)) + \eps \\&\le 2t + \eps.
\end{split}
\]
Hence letting $\eps\to0$ implies $2t \le \dist(\gamma(-t), \gamma(t)) \le 2t$ as claimed.

\medskip

So we are left to prove \eqref{z222}.
Given $0<\eps<1/9$ we can find $R=R(\eps,n,p)>5/\eps$ such that
\begin{equation}
    \dist_{GH}(B_R(p), B_R(z))\le \eps^2 R\, ,
\end{equation}
where $z\in C(Z)$ is a tip. Since $C$ splits a Euclidean factor, there exist $q_1,q_2\in B_R(p)$ satisfying
\begin{equation}\label{z1}
    \dist(p,q_1) = R, \quad \dist(p,q_2) = R,\quad \text{and} 
    \quad   \dist(p,q_1) + \dist(p,q_2) \le \dist(q_1,q_2) +  \eps^2 R \, .
\end{equation}
Let $t\in (0,1)$ be such that $\dist(p, \gamma_{p,q_1}(t))=\dist(p,\gamma_{p,q_2}(t))=1/\eps$, where
where $\gamma_{p,q_i}:[0,1]\to M$ is a minimizing constant speed geodesic such that $\gamma(0)=p$ and $\gamma(1)=q_i$, for $i=1,2$. We set $p_i:=\gamma_{p,q_i}(t)$ and check that
\begin{equation}
    \dist(p, \gamma_{p,q_1}(s)) + \dist(p, \gamma_{p,q_2}(s))
    \le \dist(\gamma_{p,q_1}(s), \gamma_{p,q_2}(s)) + \eps
    \quad \text{for any $s\in (0,t)$}\, ,
\end{equation}
which amounts to our conclusion.

Toponogov's theorem implies that $\dist(\gamma_{p,q_1}(s), \gamma_{p,q_2}(s))\ge |\tilde \gamma_{\tilde p,\tilde q_1}(s) - \tilde \gamma_{\tilde p,\tilde q_2}(s)|$, where $(\tilde p, \tilde q_1, \tilde q_2)$ is a comparison triangle in $\R^2$ corresponding to the triangle $(p,q_1,q_2)$, and $\tilde\gamma_{\tilde p,\tilde q_i}$ is the constant speed Euclidean segment from $\tilde p$ to $\tilde q_i$, for $i=1,2$. The similarity of the triangles $(\tilde p, \tilde q_1, \tilde q_2)$ and $(\tilde p, \tilde \gamma_{\tilde p,\tilde q_1}(s), \tilde \gamma_{\tilde p,\tilde q_2}(s))$, together with \eqref{z1}, give
\begin{align*}
    \dist(\gamma_{p,q_1}(s), \gamma_{p,q_2}(s)) & \ge 
    |\tilde \gamma_{\tilde p,\tilde q_1}(s) - \tilde \gamma_{\tilde p,\tilde q_2}(s)|
      =  |\tilde q_1 - \tilde q_2|\frac{ |\tilde p - \tilde \gamma_{\tilde p,\tilde q_1}(s)|}{|\tilde p - \tilde q_1|}
    \\& = \dist(q_1,q_2)\frac{\dist(p,\gamma_{p,q_1}(s)) + \dist(p,\gamma_{p,q_2}(s))}{2R}
    \\& \ge (\dist(p,q_1) + \dist(p,q_2) -\eps^2 R)\frac{\dist(p,\gamma_{p,q_1}(s)) + \dist(p,\gamma_{p,q_2}(s))}{2R}
    \\& = (1-\frac{\eps^2}{2})( \dist(p,\gamma_{p,q_1}(s)) + \dist(p,\gamma_{p,q_2}(s)))
    \\& \ge  \dist(p,\gamma_{p,q_1}(s)) + \dist(p,\gamma_{p,q_2}(s))
    -\eps\, ,
\end{align*}
where in the last step we used $\dist(p,\gamma_{p,q_1}(s))= \dist(p,\gamma_{p,q_2}(s))\le 1/\eps$.
\end{proof}

\begin{remark}
The statement of \cref{lem:IfAndOnlyIfSplit} holds in the class of Alexandrov spaces with nonnegative sectional curvature. Indeed, the proof only builds on Toponogov's theorem and metric arguments. 
\end{remark}

\begin{proof}[Conclusion of the proof of {\cref{thm1}}]
The proof immediately follows from \cref{lem:IfAndOnlyIfSplit} and \cref{cor:Princ2}.
\end{proof}




\printbibliography[title={References}]

\typeout{get arXiv to do 4 passes: Label(s) may have changed. Rerun} 

\end{document}